\documentclass[12pt]{amsart}
\usepackage{amsmath,amsthm,amssymb,amsrefs,bbm,color,esint,esvect,float,graphicx,mathrsfs}
\usepackage[utf8]{inputenc}
\usepackage{fullpage}
\usepackage{hyperref}
\usepackage{euscript,latexsym}
\usepackage{accents}
\title{Endpoint Estimates for Bergman Commutators and New Characterizations of the Bloch Space and $H^\infty$}
\author{Adam B. Christopherson, Zhenghui Huo, Nathan A. Wagner, and Yunus E. Zeytuncu}
\thanks{\begin{footnotesize} Nathan A. Wagner was supported by National Science Foundation grants DMS 203272 and 2549719.\end{footnotesize}}
\date{}
\keywords{Bergman projection; Bloch space, weak-type estimate; BMO}
\subjclass[2020]{32A25; 32A50, 32A18}
\newtheorem{theorem}{Theorem}[section]
\newtheorem{lemma}[theorem]{Lemma}
\newtheorem{proposition}[theorem]{Proposition}
\newtheorem{corollary}[theorem]{Corollary}
\newtheorem*{corollary*}{Corollary}

\theoremstyle{definition}
\newtheorem{definition}[theorem]{Definition}

\theoremstyle{remark}

\newtheorem{remark}[theorem]{Remark}

\theoremstyle{plain}
\newtheorem{atheorem}{Theorem}

\makeatletter
\newcommand{\alphprime}[1]{%
  \ifcase#1
  \or A
  \or A'
  \else
    \@Alph{\numexpr#1-1\relax}
  \fi
}
\makeatother

\newcommand{\C}{\mathbb{C}}
\newcommand{\R}{\mathbb{R}}

\newcommand{\D}{\mathbb{D}}
\newcommand{\B}{\mathbb{B}_n}

\newcommand{\LlogL}{L \textnormal{ log}^{+} \,L }

\newcommand{\osc}{\operatorname{osc}}
\newcommand{\Bl}{\mathcal{B}}
\newcommand{\BMO}{\rm{BMO}}
\newcommand{\BO}{\rm{BO}}
\newcommand{\wK}{\widehat{K}}

\begin{document}

\maketitle

\begin{abstract}
 We prove an $\LlogL $-type distributional inequality for the commutator of the Bergman projection with a conjugate Bloch symbol function on the unit ball. Such an inequality can be seen as a Bergman version of a result due to C. P\'{e}rez for real-variable Calder\'{o}n-Zygmund operators and BMO functions. We also prove that this inequality characterizes membership of analytic functions in the Bloch space and is further equivalent to a kind of modified restricted weak-type estimate, where one only tests over characteristic functions of sets comparable to Bergman balls. We also show our estimate is sharp in the sense that there exists a Bloch function $b$ so that the commutator $[\bar{b},P]$ is not weak-type $(1,1)$, and prove $[\bar{b},P]$ with $b$ analytic is weak-type $(1,1)$ if and only if $b \in H^\infty$.
\end{abstract}

\section{Introduction}
Let $\B$ be the unit ball in $\C^n$ and let $L^2(\B)$ denote the Hilbert space of square-integrable functions on $\B$ with respect to Lebesgue volume measure $V$. The \emph{Bergman space} $A^2(\B)$ is the closed subspace of $L^2(\B)$ consisting of holomorphic square-integrable functions and the \emph{Bergman projection} $P: L^2(\B) \rightarrow A^2(\B)$ is the orthogonal projection onto the Bergman space. It is well-known that $P$ has the integral representation for $f \in L^2(\B)$:
$$ Pf(z)= c_n \int_{\B}\frac{f(w)}{(1-\langle z, w \rangle)^{n+1}}\, dV(w), \quad z \in \B.$$ Here, $c_n= \frac{n!}{\pi^n}$ denotes a normalizing constant.
It is immediate that $P$ is a bounded linear operator on $L^2(\B)$, but less trivial that it extends to a bounded operator $L^p(\B)$ in the reflexive range $1<p<\infty$. It is well-known (see, for example, \cite{Bekolle} or \cite{DengLiZhao}) that $P$ fails to extend boundedly to $L^1(\B)$, but instead it satisfies a replacement weak-type $(1,1)$ estimate. The mapping properties of the Bergman projection on various function spaces and domains have been a major area of research in analysis for a long time; see, for example, \cite{Barrett, Bekolle, BekolleBonami, ForelliRudin, McNealStein,NRSW,PS}. It is connected to mathematical areas as diverse as decoupling in harmonic analysis and the regularity of the $\bar \partial $-Neumann operator in several complex variables and partial differential equations. 

Motivated by questions in harmonic analysis and operator theory, researchers have also studied the \emph{commutator of the Bergman projection} with a measurable function $b$, defined $[b,P]f:= bPf-P(bf)$. If we assume $b \in L^2(\B)$, this operator is at least densely defined on $L^2(\B).$ The question then becomes: characterize the symbols for which $[b,P]$ extends to a bounded operator on $L^p(\B)$, $1<p<\infty$. It is straightforward the commutator is bounded on every $L^p(\B)$ for bounded symbols $b \in L^\infty(\B)$, so we seek a condition weaker than bounded. The harmonic analysis model to keep in mind is the Hilbert transform $H$ on the real line. It is a celebrated result of Coifman, Rochberg, and Weiss \cite{CRW} that $[b,H]$ is bounded on $L^2$ if and only if $b$ belongs to the space of \emph{bounded mean oscillation} or $\BMO.$ This space is defined on $\mathbb{R}$ by the oscillation condition

$$ \sup_{I \subset \mathbb{R}}\frac{1}{|I|}\int_{I}|b(x)-\langle b \rangle_I|  \, dx, $$
where $I$ denotes an interval on the real line, $|I|$ represents its Lebesgue measure, and $\langle b \rangle_I:= \frac{1}{|I|} \int_{I} b(x) \, dx $ represents the integral average of $b$ on $I$.

In the case of the Bergman projection, there is a well-known connection of commutators to an important class of operators known as Hankel operators. These operators are perhaps best known in the context of the Hardy space on the unit disk $H^2(\D)$, because of their representation by Hankel matrices, but they have also been extensively studied in the Bergman space case. Given a symbol $b$, the Hankel operator $H_b$ is (formally) defined acting on a function $f \in L^2(\B)$ via $H_bf:= (I-P)bPf=bPf-P(bPf)$. Note that $H_b$ maps onto the orthogonal complement of the Bergman space, $A^2(\B)^\perp$. Then one has the (formal) operator identities

$$ [b,P]= H_b -  H_{\bar{b}}^*, \quad H_b = [b,P] \circ P, \quad H_{\bar{b}}^*=P \circ [b,P].$$

So at least on $L^2(\B)$ (and in fact on $L^p(\B)=1<p<\infty$), the bound for the commutator $[b,P]$ is equivalent to simultaneous bounds for $H_b$ and $H_{\bar{b}}^*$. On $L^2$ this statement has a further simplification because of duality, and we can replace the condition ``the adjoint $H_{\bar{b}}^*$ is bounded" by simply ``$H_{\bar{b}}$ is bounded." This connection between Bergman commutators and Hankel operators mirrors a connection to Hardy space Hankel operators for the Hilbert transform.

In the special case that $b$ is holomorphic and belongs to $L^2(\B)$, one can check that $H_{b}$ uniquely extends to the zero operator on $L^2(\B),$ so the classes of symbols for which $[b,P]$ and $H_{\bar{b}}$ are bounded on $L^2(\B)$, respectively, are one and the same. The characterization of such symbols was obtained in an important paper by Axler \cite{Axler}, who showed that these operators are bounded on $L^2(\B)$ if and only if the symbol $b$ belongs to the classical Bloch space $\mathcal{B}$. As we will see, the Bloch space can equivalently be described as the holomorphic functions on $\B$ which satisfy a bounded mean oscillation condition, demonstrating the strong analogy with the Hilbert transform case. Axler also proved that the operator $H_{\bar{b}}$ is compact if and only if $b$ belongs to the little Bloch space. The condition that $b$ is holomorphic was dropped by Zhu in \cite{ZhuHankel}, where he proved that the Hankel operators $H_b$ and $H_{\bar{b}}$ are simultaneously bounded if and only if $b$ belongs to a space of bounded mean oscillation that depends on the exponent $p$, which Zhu denoted by $\BMO_p.$ Generalizations of these results were obtained for bounded symmetric domains and pseudoconvex domains in several complex variables in \cite{BBCZ, Li92, Li94, LiLuecking, HHLPW}.

Let us turn back to the commutator of the Hilbert transform (or more general Calder\'{o}n-Zygmund operators in real-variable harmonic analysis).
The situation at the endpoint $p=1$ for commutators is markedly different from the range $1<p<\infty.$ It is not surprising that if $b$ is a BMO symbol, then the commutator $[b,H]$ is not bounded on $L^1(\B)$. Instead, one might naively expect that the commutator $[b,H]$ obeys a weak-type $(1,1)$ estimate, in line with the operator $H$ itself. However, P\'{e}rez \cite{Perez} gave an example showing that the weak-type estimate fails. The substitute estimate here is an $\LlogL$ distributional inequality:

\begin{theorem}[P\'{e}rez]
Let $T$ be a Calder\'{o}n-Zygmund operator on $\R^n$ and $b \in \BMO.$ Then
there exists $C>0$ so that the following holds for all bounded, compactly supported functions $f$ on $\R^n$ and $\lambda>0$:

$$ \left| \left \{ x \in \R^n: |[b,T]f(x)|>\lambda \right \}  \right | \leq C  \int_{\R^n} \frac{|f(x)|}{\lambda} \left( 1+ \log^{+}\left( \frac{|f(x)|}{\lambda}\right) \right) \, dx.$$
    
\end{theorem}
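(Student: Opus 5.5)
By homogeneity (replace $b$ by $b/\|b\|_{\BMO}$) we may assume $\|b\|_{\BMO}=1$. Since $[b,T]$ is linear in $f$, replacing $f$ by $f/\lambda$ reduces the claim to $\lambda=1$. Writing $\Phi(t)=t(1+\log^+ t)$, the goal is then
$$|\{x:|[b,T]f(x)|>1\}|\le C\int\Phi(|f|)\,dx .$$
The plan follows the Calder\'on--Zygmund decomposition strategy, with the twist that the "bad" part must be handled using the oscillation of $b$. That step is where the $\LlogL$ loss enters.

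First, perform the Calder\'on--Zygmund decomposition of $|f|$ at height $1$. This gives disjoint dyadic cubes $Q_j$ with
$$1<\frac{1}{|Q_j|}\int_{Q_j}|f|\le 2^n,\qquad \sum_j|Q_j|\le \|f\|_1 .$$
Write $f=g+h$, where $h=\sum_j h_j$ with $h_j=(f-\langle f\rangle_{Q_j})\chi_{Q_j}$, and $\|g\|_\infty\lesssim 1$, $\|g\|_1\le\|f\|_1$.

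For the good part, use the $L^2$ boundedness of $[b,T]$ (Coifman--Rochberg--Weiss). Chebyshev's inequality then gives
$$|\{|[b,T]g|>1/2\}|\lesssim\|g\|_2^2\lesssim\|g\|_1\le\|f\|_1 .$$
Let $\tilde E=\bigcup_j 2Q_j$. Then $|\tilde E|\lesssim\|f\|_1$, so it suffices to control $[b,T]h$ off $\tilde E$.

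For the bad part, use the standard splitting
$$[b,T]h_j=(b-b_{Q_j})Th_j-T\big((b-b_{Q_j})h_j\big).$$
For the first term, the mean-zero property of $h_j$ and the H\"ormander smoothness of the kernel give
$$|Th_j(x)|\lesssim\int_{Q_j}|h_j|\,\frac{\ell(Q_j)^\delta}{|x-c_j|^{n+\delta}}\,dy\qquad\text{for }x\notin 2Q_j .$$
Integrating $|b-b_{Q_j}|$ against this kernel over dyadic annuli $2^{k+1}Q_j\setminus 2^kQ_j$ costs a factor $k$ from $|b_{2^kQ_j}-b_{Q_j}|\lesssim k$. Since $\sum_k k2^{-k\delta}<\infty$, the $L^1(\R^n\setminus\tilde E)$ norm of the first term is at most $C\sum_j\|h_j\|_1\lesssim\|f\|_1$, and Chebyshev's inequality handles it.

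The second term, $T(\sum_j(b-b_{Q_j})h_j)$, is the main obstacle. We apply the weak-type $(1,1)$ bound for $T$, which reduces matters to bounding $\sum_j\int_{Q_j}|b-b_{Q_j}||h_j|$. To do this, use the generalized H\"older inequality in Orlicz spaces,
$$\frac{1}{|Q|}\int_Q|uv|\lesssim\|u\|_{\exp L,Q}\,\|v\|_{\LlogL,Q},$$
combined with the John--Nirenberg inequality, $\|b-b_Q\|_{\exp L,Q}\lesssim\|b\|_{\BMO}=1$. This yields
$$\int_{Q_j}|b-b_{Q_j}||h_j|\lesssim|Q_j|\,\|h_j\|_{\LlogL,Q_j}.$$
Next, bound the Luxemburg norm, using $\|h_j\|_{\LlogL,Q_j}\lesssim\|f\|_{\LlogL,Q_j}+\langle|f|\rangle_{Q_j}$. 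The key point is then that the Luxemburg norm satisfies
$$\|f\|_{\LlogL,Q}\le\max\Big\{1,\frac{1}{|Q|}\int_Q\Phi(|f|)\Big\},$$
because if $\frac{1}{|Q|}\int_Q\Phi(|f|)\le 1$ then $\lambda=1$ is admissible, and otherwise $\Phi(t/\lambda)\le\Phi(t)/\lambda$ for $\lambda\ge1$. Since on each $Q_j$ we have $1<\frac{1}{|Q_j|}\int_{Q_j}|f|\le\frac{1}{|Q_j|}\int_{Q_j}\Phi(|f|)$, this gives
$$|Q_j|\,\|f\|_{\LlogL,Q_j}\le\int_{Q_j}\Phi(|f|).$$
Summing over the disjoint $Q_j$ gives the bound $\int\Phi(|f|)$. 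Collecting the three estimates (good part, first bad term, second bad term) proves the theorem; the only genuinely logarithmic loss arises in this last step, consistent with the failure of the weak-type $(1,1)$ estimate.
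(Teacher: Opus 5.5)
Your argument is correct. It is essentially P\'erez's original Calder\'on--Zygmund decomposition proof. The paper itself does not prove this statement; it only quotes it from \cite{Perez}.

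Three small points of precision, none of which affects the argument:
\begin{itemize}
\item The bound you use for $Th_j$ off the cube is the pointwise H\"older regularity $|K(x,y)-K(x,c_j)|\lesssim |y-c_j|^{\delta}|x-c_j|^{-n-\delta}$, not merely H\"ormander's integral condition. The integral condition alone would not suffice here, because you integrate the kernel difference against $|b-b_{Q_j}|$, which costs a factor $k$ on the $k$-th annulus.
\item That regularity applies only when $|x-c_j|\ge 2|y-c_j|$. So the exceptional set should be $\bigcup_j 2\sqrt{n}\,Q_j$ rather than $\bigcup_j 2Q_j$.
\item Normalizing $\|b\|_{\BMO}=1$ actually produces $\Phi(\|b\|_{\BMO}|f|/\lambda)$ on the right. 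This is harmless, since $C$ may depend on $b$.
\end{itemize}

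The closest argument in the paper is the proof of the Bergman analogue, Theorem \ref{BoundedSuffThm2}. It follows the sparse-domination route of \cite{LMRR2017} rather than yours. It uses no Calder\'on--Zygmund decomposition, no cancellation or kernel regularity, and no $L^2$ bound for the commutator as input. The steps are:
\begin{itemize}
\item Via Lemma \ref{lem:dyadicdomination}, $|[\bar b,P]f|$ is dominated pointwise by the positive dyadic operators $\mathcal{T}_bf+\mathcal{T}_b^*f$ over Carleson tents.
\item The generalized H\"older inequality and John--Nirenberg are applied tent by tent, giving $\mathcal{T}_b^*f\lesssim\|b\|_{\BMO}\sum_K\langle|f|\rangle_{\Psi,\wK}\mathbf{1}_{\wK}$.
\item The set $\{M_\Psi f>1/4\}$ is discarded using Lemma \ref{YoungMaxDist}.
\item The rest is controlled by sorting tents by $\langle|f|\rangle_{\Psi,\wK}\sim 4^{-k}$ and applying the layer-counting Lemma \ref{EstimateTbStar}. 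Its exceptional part carries a factor $2^{-2^k}$ because tent volumes decay geometrically down the dyadic tree.
\item The term $\mathcal{T}_b$ is handled the same way, using John--Nirenberg exceptional sets.
\end{itemize}

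Your route is shorter and isolates the $L\log L$ loss in the single estimate $|Q_j|\,\|h_j\|_{L\log L,Q_j}\lesssim\int_{Q_j}\Phi(|f|)$. However, it relies on the cancellation of the $h_j$, on kernel smoothness, and on the Coifman--Rochberg--Weiss $L^2$ theorem. The paper's route needs only a positive dyadic model operator and John--Nirenberg on tents. That is what makes it convenient for the Bergman projection, where the natural tool is the dyadic domination of $P^+$ rather than a kernel-regularity analysis.
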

In the same paper, P\'{e}rez also obtained refined estimates using so-called exponential BMO spaces. These results represent one part of a much larger literature concerning characterizations of $\BMO$ and the behavior of commutators of singular integrals on various function spaces. See, for example, \cite{GLW}.

We turn back to the commutator of the Bergman projection, where comparatively little is known when $p=1.$ We first consider some simple results that we can obtain immediately without effort in this setting. If we assume the unnecessarily strong condition that $b$ is a bounded measurable function, the following simple result for the commutator is an immediate consequence of the weak-type $(1,1)$ estimate for the Bergman projection $P.$

\begin{proposition} \label{LInfinityTrivial}
If $b \in L^{\infty}(\B)$, then the commutator $[b,P]$ is weak-type $(1,1)$.



    
    
\end{proposition}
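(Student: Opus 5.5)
The plan is to split the commutator into its two terms and estimate each with the weak-type $(1,1)$ bound for $P$ quoted just before the statement. For $f \in L^1(\B)$ (or first for $f$ in a dense class such as $L^1 \cap L^2$) we write
$$ [b,P]f = bPf - P(bf). $$
For any $\lambda>0$ we have the inclusion
$$ \{ z \in \B : |[b,P]f(z)| > \lambda \} \subset \{ |bPf| > \lambda/2\} \cup \{ |P(bf)| > \lambda/2 \}, $$
so it suffices to bound the measure of each set by a constant multiple of $\|f\|_{L^1}/\lambda$.

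For the first set, note that $|bPf| \le \|b\|_\infty |Pf|$ almost everywhere. Hence
$$ \{|bPf|>\lambda/2\} \subset \{|Pf| > \lambda/(2\|b\|_\infty)\}, $$
and the weak-type $(1,1)$ bound for $P$ gives a measure of at most $2C\|b\|_\infty \|f\|_{L^1}/\lambda$. If $\|b\|_\infty=0$ the claim is trivial. For the second set, $bf \in L^1(\B)$ with $\|bf\|_{L^1} \le \|b\|_\infty\|f\|_{L^1}$. Applying the weak-type bound for $P$ to $bf$ gives a measure of at most $2C\|b\|_\infty\|f\|_{L^1}/\lambda$. Adding the two estimates shows
$$ V(\{|[b,P]f|>\lambda\}) \le \frac{4C\|b\|_\infty}{\lambda}\|f\|_{L^1}. $$

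The only point needing care is the definition of $[b,P]$ on $L^1$. The integral formula for $P$ makes sense pointwise for $f \in L^1$, because the kernel is bounded for fixed $z \in \B$. So $Pf$ and $P(bf)$ are defined directly, and the argument above applies verbatim. Alternatively, one proves the estimate on $L^2 \cap L^1$ and extends by density, using that weak-$L^1$ is a quasi-Banach space. There is no real obstacle; the proposition is a direct corollary of the weak-type estimate for $P$.
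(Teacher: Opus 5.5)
Your proof is correct and is exactly the argument the paper intends. The paper gives no separate proof and simply calls the proposition an immediate consequence of the weak-type $(1,1)$ bound for $P$; your splitting into $bPf$ and $P(bf)$, each controlled by that bound with constant $\|b\|_\infty$, makes this explicit.
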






If we assume the even stronger condition that $b$ is bounded and compactly supported, we get that the commutator is bounded on $L^1(\B).$

\begin{proposition}
Suppose $b$ is bounded and compactly supported. Then the commutator $[b, P]$ is bounded on $L^1(\B).$

\begin{proof}
The integral kernel of the operator $[b,P]$ is equal to $K(z,w)= c_n\frac{(b(z)-b(w))}{ (1-\langle z, w\rangle)^{n+1}}.$ This kernel clearly satisfies the estimate $\sup_{z,w \in \B} |K(z,w)|<\infty$
because $b$ is bounded and compactly supported and the Bergman kernel is bounded away from the boundary diagonal. Then the $L^1$ bound is a trivial consequence of Fubini's theorem.
\end{proof}

\end{proposition}






The goal of this paper is to obtain endpoint estimates for the commutator of the Bergman projection when the strong condition that $b$ is bounded is replaced by some weaker condition, such as bounded mean oscillation. Because of technical obstructions that arise when considering general BMO symbols, this problem is most tractable in the case of anti-holomorphic symbols, which first appeared in Axler's characterization of the Bloch space. Thus, we will restrict our attention to symbols whose conjugate belongs to the Bloch space. Our results can therefore also be viewed as an endpoint version of the $L^2$ bound obtained by Axler. 

First, one might reasonably wonder if it is possible to obtain a weak-type $(1,1)$ estimate for the commutator $[\bar{b}, P]$ when $b$ is a Bloch function, despite the result by P\'{e}rez for the Hilbert transform. One reason is that the anti-holomorphic restriction on $b$ is rather strong, another is that sometimes the Bergman projection is better behaved than singular integral operators.  Our first main result shows that this is not the case.

\begin{atheorem} \label{WeakTypeCounterexample}
There exists a function $b \in \mathcal{B}$ such that the commutator $[\bar{b},P]$ is not weak-type $(1,1).$ In fact, $[\bar{b},P]$ is not even restricted weak-type $(1,1).$
\end{atheorem}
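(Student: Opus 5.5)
The plan is to test the commutator on characteristic functions of small sets near the boundary, chosen so that the commutator can be computed \emph{exactly} by a mean value property. We take the Bloch function $b(z)=\log\frac{1}{1-z_1}$, which is a standard example: $(1-|z|^2)|\nabla b(z)|\lesssim \frac{1-|z|^2}{|1-z_1|}\le 2$, so $b\in\mathcal{B}$. We will show that no constant $C$ can satisfy
$$\left|\left\{z:|[\bar b,P]\chi_E(z)|>\lambda\right\}\right|\le \frac{C|E|}{\lambda}$$
for all measurable $E\subset\B$ and $\lambda>0$. This failure of the restricted weak-type estimate in particular rules out the weak-type $(1,1)$ estimate.

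\emph{Choice of test sets and exact formula.} For $0<\delta<1/4$ let $a=(1-2\delta,0,\dots,0)$, and let $E_\delta=a+A(\mathbb{B}_n)$. Here $A$ is the complex-linear map $A(w)=(\delta w_1, c\sqrt{\delta}\,w_2,\dots,c\sqrt{\delta}\,w_n)$, with $c>0$ small enough that $E_\delta\subset\B$ (a routine check). Then $|E_\delta|\approx \delta^{n+1}$.

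For fixed $z\in\B$, the function $w\mapsto \overline{b(w)}\,(1-\langle z,w\rangle)^{-n-1}$ is antiholomorphic near $\overline{E_\delta}$. Composing an antiholomorphic function with the complex-affine map $w\mapsto a+Aw$ keeps it antiholomorphic, so the mean value property over the Euclidean ball transfers to $E_\delta$ centered at $a$. The same holds with $\bar b$ replaced by $1$. Hence
$$[\bar b,P]\chi_{E_\delta}(z)=c_n|E_\delta|\,\frac{\overline{b(z)}-\overline{b(a)}}{(1-\langle z,a\rangle)^{n+1}} .$$
This avoids any approximation or kernel estimates. The same argument with a centered Euclidean ball would only test whether $b$ lies in weak $L^1$, which every Bloch function does, so moving the set toward the boundary is essential.

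\emph{Lower bound on the level set.} Fix a small region at unit scale, say $R=\{z\in\B: |1-z_1|\in(1/4,1/2),\ |z'|<1/4\}$, which has volume $\approx 1$ independent of $\delta$. On $R$ we have $|1-\langle z,a\rangle|=|1-(1-2\delta)z_1|\le 1$, since $\mathrm{Re}\,z_1>0$ there. We also have
$$|b(z)-b(a)|\ge \mathrm{Re}\,(b(a)-b(z))=\log\frac{|1-z_1|}{2\delta}\ge \log\frac{1}{8\delta}.$$
Thus $|[\bar b,P]\chi_{E_\delta}|\gtrsim \delta^{n+1}\log(1/\delta)$ on $R$. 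Choosing $\lambda=\epsilon\,\delta^{n+1}\log(1/\delta)$ for a small fixed $\epsilon$, the level set contains $R$, so its measure is $\gtrsim 1$. On the other hand,
$$\frac{C|E_\delta|}{\lambda}\approx\frac{C}{\log(1/\delta)}\to 0 \quad\text{as } \delta\to 0.$$
This contradicts the restricted weak-type inequality.

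\emph{Main obstacle.} The delicate step is the exact evaluation of the commutator. One needs a test set that is comparable to a Bergman ball, so that $|E_\delta|\approx\delta^{n+1}$ and the normalization works, and that is still a complex-affine image of a ball, so the mean value property for antiholomorphic functions applies exactly. Using the genuine Bergman ball $\varphi_a(r\mathbb{B}_n)$ would introduce a non-antiholomorphic Jacobian, and one would then have to control error terms. I would therefore first verify carefully the containment $E_\delta\subset\B$ for the anisotropic ellipsoid; the remaining estimates are elementary.
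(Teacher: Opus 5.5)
Your argument is correct and is essentially the paper's proof. It uses the same logarithmic symbol, the same exact formula $[\bar b,P]\mathbf{1}_E=c_n|E|\,(\bar b-\bar b(a))(1-\langle\cdot,a\rangle)^{-n-1}$ from the mean value property on a set centered at a point $a$ near $(1,0,\dots,0)$, and a level-set count. Your region $R$ is essentially the paper's annulus $U_{k-1}$; the paper uses $U_{k/2}$, and your unit-scale choice is the fixed-region trick the paper uses for Theorem~\ref{HInfinity}. Your ellipsoid replaces the paper's tangent Euclidean ball $B(z_0,1-s)$.

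One correction to your ``main obstacle'': the size and shape of $E$ are irrelevant, because $|E|$ cancels from both sides of the restricted weak-type inequality. So any Euclidean ball centered at $a$ and contained in $\mathbb{B}_n$ would do, and no comparability with a Bergman ball is needed. Indeed, the paper's $B(z_0,1-s)$ is not comparable to a Bergman ball when $n\ge 2$.
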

Theorem \ref{WeakTypeCounterexample} can be extended to any bounded pseudoconvex domain $\Omega$ with strongly convex smooth boundary points.    This setting covers all bounded smooth pseudoconvex domains since strongly convex points form a nonempty open subset on the boundary. 
\begin{atheorem} \label{StronglyConvex} Let $\Omega$ be a bounded pseudoconvex domain in $\mathbb C^n$ with strongly convex smooth boundary points and let $P_{\Omega}$ be the Bergman projection over it. Then there exists a Bloch function $b$ over $\Omega$ such that the commutator $[\bar{b},P_{\Omega}]$ is not restricted weak-type $(1,1).$
\end{atheorem}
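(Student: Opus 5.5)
The strategy is to localize near a strongly convex boundary point $p\in\partial\Omega$ and transplant the construction used for Theorem \ref{WeakTypeCounterexample} on the ball. The Bloch symbol will be built from a peak-type logarithm, and the Bergman kernel of $\Omega$ near $p$ will be compared with the ball kernel through its known asymptotics at strongly pseudoconvex points (Fefferman/Boutet de Monvel--Sjöstrand).

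\textbf{Step 1: the symbol.} Since $p$ is strongly convex, after a unitary change of coordinates and translation we may assume $p=0$, the outward normal at $p$ is $-\operatorname{Re} z_1$ direction, and $\Omega\subset\{\operatorname{Re} z_1>0\}$ near $0$, with $\operatorname{Re} z_1\gtrsim |z|^2$ on $\Omega$ locally. By strong convexity we can in fact choose a global supporting function: there is a holomorphic linear $\ell(z)$ with $\operatorname{Re}\ell>0$ on $\overline\Omega\setminus\{p\}$ and $\ell(p)=0$. Set $b(z)=\log \ell(z)$ with the principal branch. The Bloch property $|\nabla b(z)|\lesssim \delta(z)^{-1}$ fails in complex-tangential directions if we use the Euclidean gradient naively, so the correct statement is in terms of the Bergman (Kobayashi) metric. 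I would verify $\sup_z |\partial b(z)\xi|/F_K(z,\xi)<\infty$, where $F_K$ is the Kobayashi metric. Near $p$, $F_K(z,\xi)\approx |\xi_N|/\delta+|\xi_T|/\delta^{1/2}$. Since $|\ell(z)|\gtrsim\delta(z)+|z|^2$ by strong convexity, $|\partial b\,\xi|\le |\xi|/|\ell|$, and this is controlled by the metric. Away from $p$, $b$ is smooth up to the boundary.

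\textbf{Step 2: test sets.} Take $f=\chi_{E_k}$, where $E_k$ is a Bergman ball of radius $\sim 1$ centered at $z_k=p+\epsilon_k\nu$ (inner normal), with $|E_k|\approx \epsilon_k^{n+1}$. Write $[\bar b,P_\Omega]\chi_{E_k}(z)=\int_{E_k}(\bar b(z)-\bar b(w))K_\Omega(z,w)\,dV(w)$. For $z$ in the region $A_k=\{z: \epsilon_k\ll|\ell(z)|\ll 1\}$, we have $|b(z)-b(w)|\approx \log(|\ell(z)|/\epsilon_k)$ up to bounded imaginary parts. Moreover $K_\Omega(z,w)\approx K_\Omega(z,z_k)$ has nonvanishing main term $c\,\Phi(z,z_k)^{-(n+1)}$ from the Fefferman expansion, with the error uniformly smaller when $z,w$ are both near $p$. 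This gives the lower bound $|[\bar b,P]\chi_{E_k}(z)|\gtrsim \epsilon_k^{n+1}\log(|\ell(z)|/\epsilon_k)\,|\Phi(z,z_k)|^{-(n+1)}$ on a large sector of $A_k$. There the phase of $\Phi^{-(n+1)}$ stays in a fixed cone, so no cancellation occurs when integrating over $w\in E_k$. This is the same mechanism as in the ball, where $\Phi=1-\langle z,w\rangle$.

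\textbf{Step 3: level sets.} Fix $\lambda=\lambda_k$ and compute the measure of $\{z\in A_k: \epsilon_k^{n+1}\log(|\ell|/\epsilon_k)|\Phi|^{-(n+1)}>\lambda\}$ using nonisotropic polar coordinates $|\Phi(z,z_k)|\approx r$, for which the volume of $\{|\Phi|\sim r\}$ is about $r^{n+1}$. Choosing $\lambda= t\,\epsilon_k^{0}$ appropriately, the set contains $\{|\Phi|\lesssim r_t\}$ with $r_t^{n+1}\approx\epsilon_k^{n+1}\log(r_t/\epsilon_k)/\lambda$. Hence its measure is $\gtrsim |E_k|\lambda^{-1}\log(r_t/\epsilon_k)$, which exceeds $C|E_k|/\lambda$ for every $C$ once $\epsilon_k\to 0$ with $\lambda$ suitably normalized, for instance $\lambda\approx1$. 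This contradicts the restricted weak-type $(1,1)$ inequality.

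The main obstacle will be Step 2: establishing a uniform nondegenerate lower bound for $K_\Omega$ off the diagonal in the region $\epsilon_k\ll|\Phi|\ll1$. There only the leading Fefferman term is reliable, and the $\log$-correction term $\tilde\phi\log\Phi$ and the local-to-global comparison must be shown to be negligible. I would first try localization: for strongly pseudoconvex boundary points on a pseudoconvex domain, $K_\Omega-K_{U}$ is smooth near $p$, where $U$ is a strongly convex local model domain. The model is then compared with a ball osculating at $p$ via the Fefferman expansion, keeping $|\Phi|$ small enough that the error $O(|\Phi|^{-n-1/2})$ is dominated.
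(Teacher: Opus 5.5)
\textbf{Gap in Step 3.} Your overall route is the paper's: the symbol is the logarithm of a supporting linear function, the test set sits at height $\epsilon_k$ on the inner normal, the lower bound $|K_\Omega(z,z_k)|\gtrsim|\Phi(z,z_k)|^{-(n+1)}$ comes from the localized Fefferman expansion (the paper cites Engli\v{s}), and the level set lives on a shell $|\Phi|\approx r$. But Step 3, the decisive quantitative step, is wrong as written. With $f=\chi_{E_k}$ unnormalized, the lower bound you produce in Step 2 is
\[
\epsilon_k^{n+1}\log\bigl(|\Phi|/\epsilon_k\bigr)\,|\Phi|^{-(n+1)}=x^{-(n+1)}\log x,\qquad x:=|\Phi(z,z_k)|/\epsilon_k\ge 1,
\]
and this never exceeds $1/(e(n+1))$. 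So once $\lambda$ is above a fixed constant, the set you exhibit is empty. For any fixed $\lambda$, your relation $r_t^{n+1}\approx\epsilon_k^{n+1}\log(r_t/\epsilon_k)/\lambda$ becomes $x^{n+1}\approx\lambda^{-1}\log x$ with $x=r_t/\epsilon_k$, whose solution depends only on $\lambda$. Hence $\log(r_t/\epsilon_k)$ stays bounded as $\epsilon_k\to0$, and no contradiction arises. Sharper estimates cannot rescue $\lambda\approx1$: on the ball, Theorem~\ref{BlochCharacterize}, condition (3), shows that for Bloch $b$ the inequality $|\{|[\bar b,P]\mathbf{1}_A|>\lambda\}|\le C|A|/\lambda$ actually holds for all $\lambda>1$.

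\textbf{The fix.} The divergence appears only if you send $\lambda\to0$ together with $\epsilon_k$, inside the window $\epsilon_k^{n+1}\ll\lambda_k\ll1$. The lower end forces $\log(r_t/\epsilon_k)\to\infty$. The upper end keeps $r_t\to0$, so the shell stays in the neighborhood of $p$ where the kernel asymptotics are valid. For instance, $\lambda_k=\epsilon_k^{(n+1)/2}$ gives $r_t\approx\epsilon_k^{1/2}$ up to a logarithmic factor, and
\[
\frac{\lambda_k\,\bigl|\{z:|[\bar b,P_\Omega]\chi_{E_k}(z)|>\lambda_k\}\bigr|}{|E_k|}\gtrsim\log(1/\epsilon_k)\to\infty .
\]
This is precisely the paper's choice $m=k/2$, i.e.\ the shell $|z_1|\approx s^{1/2}$ with $s=2^{-k}$.

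\textbf{Smaller points.}
\begin{enumerate}
\item The paper makes your Step 2 exact by testing on a Euclidean ball $B\subset\Omega$ centered at $z_k$. Since $w\mapsto(\bar b(z)-\bar b(w))K_\Omega(z,w)$ is anti-holomorphic, the mean value property gives $[\bar b,P_\Omega]\mathbf{1}_B(z)=|B|\,(\bar b(z)-\bar b(z_k))\,K_\Omega(z,z_k)$. This removes both the approximation $K_\Omega(z,w)\approx K_\Omega(z,z_k)$ and the non-cancellation argument, and the restricted weak-type ratio does not depend on $|B|$.
\item In Step 1, the bound $|\partial b\,\xi|\le|\xi|/|\ell|$ is too crude for the Kobayashi-metric Bloch condition. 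On the inner normal $|\ell|\approx\delta$, so a tangential $\xi$ gives $\delta^{-1}$ rather than $\delta^{-1/2}$. You need that $\partial\ell$ is the complex conormal at $p$, so that $|\partial\ell\,\xi|\lesssim|\xi_N|+|z-p|\,|\xi_T|$, combined with $|z-p|/(\delta+|z-p|^2)\lesssim\delta^{-1/2}$.
\item Strong convexity at $p$ gives only a local supporting hyperplane, so $\operatorname{Re}\ell>0$ on $\overline\Omega\setminus\{p\}$ needs separate justification. The paper makes the same assumption.
\end{enumerate}
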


In fact, our second main result states that for a holomorphic function $b $ the commutator $[\bar{b},P]$ is weak-type $(1,1)$ if and only if $b \in H^\infty(\B).$ We remark this result parallels a theorem for the Hilbert transform and $L^\infty$ functions, see \cite[Theorem 3.2]{Accomazzo}.

\begin{atheorem}\label{HInfinity}
Let $b \in \textnormal{Hol}(\B)$. Then the following statements are equivalent:
\begin{enumerate}
    \item $[\bar{b},P]$ is weak-type $(1,1)$;
    \item $b \in H^\infty(\B)$.
\end{enumerate}
\end{atheorem}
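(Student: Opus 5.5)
The direction (2)$\Rightarrow$(1) needs no new work. If $b\in H^\infty(\B)$, then $\bar b\in L^\infty(\B)$, and Proposition \ref{LInfinityTrivial} says directly that $[\bar b,P]$ is weak-type $(1,1)$. For (1)$\Rightarrow$(2), the plan is to test the weak-type inequality on normalized characteristic functions of small Euclidean balls. The mean value property then produces an exact formula for the commutator applied to these functions. A subharmonicity argument upgrades the resulting weak-type information to a pointwise bound on $b$.

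\textbf{Step 1 (exact formula).} Fix $w_0\in\B$ and a Euclidean ball $E=B(w_0,\rho)\subset\B$. Set $f=\mathbf 1_E/V(E)$, so that $\|f\|_{L^1}=1$. For fixed $z$, the maps $w\mapsto K(z,w)$ and $w\mapsto \overline{b(w)}K(z,w)$ are antiholomorphic, hence harmonic, on a neighbourhood of $\overline E$. The mean value property over the ball $E$ therefore gives
\[
Pf(z)=K(z,w_0),\qquad P(\bar b f)(z)=\overline{b(w_0)}\,K(z,w_0),
\]
where $K(z,w)=c_n(1-\langle z,w\rangle)^{-(n+1)}$. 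Consequently
\[
[\bar b,P]f(z)=\overline{\bigl(b(z)-b(w_0)\bigr)}\,K(z,w_0)=:\overline{g_{w_0}(z)}.
\]
Here $g_{w_0}$ is holomorphic on $\B$. One should check that $f$ lies in the class on which the weak-type estimate is assumed; since $f$ is bounded with compact support and $b$ is bounded on $E$, this is fine. Hypothesis (1) then yields the uniform estimate
\[
V(\{z:|g_{w_0}(z)|>\lambda\})\le C/\lambda\qquad\text{for all } w_0\in\B,\ \lambda>0 .
\]

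\textbf{Step 2 (from weak $L^1$ to a pointwise bound).} A function in weak $L^1$ on a set of finite measure lies in $L^{1/2}$. Concretely, integrating the distribution function gives $\int_{B(0,1/2)}|g_{w_0}|^{1/2}\,dV\le C'$, with $C'$ depending only on $C$ and $n$. Since $g_{w_0}$ is holomorphic, $|g_{w_0}|^{1/2}$ is plurisubharmonic. The sub-mean value property on $B(0,1/2)$ then gives $|g_{w_0}(0)|^{1/2}\le C''$. On the other hand, $g_{w_0}(0)=c_n\,(b(0)-b(w_0))$. Hence
\[
|b(w_0)|\le |b(0)|+C''^2/c_n\qquad\text{for every } w_0\in\B,
\]
which says $b\in H^\infty(\B)$.

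The main point to watch is Step 1, namely that the commutator of a normalized ball indicator collapses exactly to a point evaluation. This relies on $\bar b$ being antiholomorphic, so that $\bar b(w)K(z,w)$ is still harmonic in $w$; with a general symbol there would be error terms. Once that identity is in place, the rest is the standard weak-$L^1\subset L^{1/2}$ embedding combined with subharmonicity. The key feature is that the constants are uniform in $w_0$, because we evaluate at the fixed interior point $z=0$, where $|K(0,w_0)|=c_n$ regardless of $w_0$.
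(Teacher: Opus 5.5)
Your argument is correct in substance, but one claim is false as written. With $K(z,w)=c_n(1-\langle z,w\rangle)^{-(n+1)}$, the map $z\mapsto K(z,w_0)$ is holomorphic. So $g_{w_0}(z)=(b(z)-b(w_0))\overline{K(z,w_0)}$ is a holomorphic function times an antiholomorphic one, and it is not holomorphic in general. The fix is immediate, because Step 2 only uses the modulus. Set $h_{w_0}(z):=(b(z)-b(w_0))K(z,w_0)$. This is holomorphic and satisfies $|h_{w_0}|=|[\bar b,P]f|$, so $|h_{w_0}|^{1/2}$ is plurisubharmonic. Moreover $h_{w_0}(0)=c_n(b(0)-b(w_0))$, so your Step 2 goes through verbatim with $h_{w_0}$ in place of $g_{w_0}$.

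The paper uses the same test functions and the same exact formula, but it finishes differently.
\begin{itemize}
\item \emph{The paper's route.} It argues by contradiction. It takes $z_k$ with $|b(z_k)|\to\infty$ and notes that $|1-\langle z,z_k\rangle|<2$ on $B(0,\frac12)$. Hence $|[\bar b,P]f_k|$ exceeds $\lambda_k:=\frac{n!}{\pi^n 2^{n+2}}\inf_{B(0,1/2)}|b-b(z_k)|$ on all of $B(0,\frac12)$. Since $b$ is bounded on $B(0,\frac12)$, $\lambda_k\to\infty$, and the weak-type bound $|B(0,\frac12)|\le C/\lambda_k$ gives the contradiction. This needs nothing beyond continuity of $b$ on a compact set, and no subharmonicity.
\item \emph{Your route.} You import the weak-$L^1\subset L^{1/2}$ embedding plus the sub-mean-value device, which the paper uses only in Section~\ref{NecessitySection} for the Bloch necessity. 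In exchange you get directly an explicit estimate $\sup_{\B}|b-b(0)|\lesssim C$, where $C$ is the weak-type constant and the implicit constant depends only on $n$.
\item \emph{What this buys.} Note that $[\bar b,P]=[\overline{b-b(0)},P]$. Combining your estimate with the trivial direction applied to $b-b(0)$ shows that the weak-type norm of $[\bar b,P]$ is comparable to $\sup_{\B}|b-b(0)|$. The paper's qualitative contradiction argument does not state this quantitative equivalence.
\end{itemize}
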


Our third main result is a version of P\'{e}rez's distributional estimate, but with the Bergman projection replacing the Hilbert transform. Moreover, we show that this estimate actually gives a characterization of the Bloch space, similar to the $\BMO$ characterization obtained in \cite{Accomazzo}. In fact, we show that the distributional estimate holding on all bounded, compactly supported functions is equivalent to it holding only on characteristic functions of Bergman balls. 

\begin{atheorem} \label{BlochCharacterize}
Let $b \in \mathrm{Hol}(\B).$ The following are equivalent:
\begin{enumerate}
\item $b \in \mathcal{B}$.
\item there exists $C>0$ so that the following holds for all bounded, compactly supported functions $f$ on $\B$ and $\lambda>0$:

$$ \left| \left \{ z \in \B: |[\overline{b},P]f(z)|>\lambda \right \}  \right | \leq C  \int_{\B} \frac{|f(z)|}{\lambda} \left( 1+ \log^{+}\left( \frac{|f(z)|}{\lambda}\right) \right) \, dV(z).$$

\item there exists $C>0$ so that the following holds for all measurable sets $A \subset \B$ and $\lambda>1$:

$$ \left| \left \{ z \in \B: |[\overline{b},P](\mathbf{1}_{A})(z)|>\lambda \right \}  \right | \leq C  \frac{|A|}{\lambda}.$$

\item there exists $C, r>0$ so that the following holds for all $z \in \B$ and $\lambda>1$:

$$ \left| \left \{ w \in \B: |[\overline{b},P](\mathbf{1}_{E(z,\hat{r})})(w)|>\lambda \right \}  \right | \leq C  \frac{|E(z,\hat{r})|}{\lambda}.$$

\end{enumerate}

\end{atheorem}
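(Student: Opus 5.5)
The plan is to prove the cycle $(1)\Rightarrow(2)\Rightarrow(3)\Rightarrow(4)\Rightarrow(1)$.

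\textbf{Step $(2)\Rightarrow(3)\Rightarrow(4)$.} These implications are essentially free. For $(2)\Rightarrow(3)$, take $f=\mathbf{1}_A$ and $\lambda>1$. Then $|f|/\lambda<1$, so $\log^+(|f|/\lambda)=0$, and the right-hand side is $C|A|/\lambda$. If $A$ is not relatively compact, approximate it by $A\cap\{|z|<1-1/k\}$ and pass to the limit. For $(3)\Rightarrow(4)$, specialize $A$ to the set $E(z,\hat r)$.

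\textbf{Step $(1)\Rightarrow(2)$.} I would write
\[
[\bar b,P]f(z)=c_n\int_{\B}\frac{\overline{b(z)}-\overline{b(w)}}{(1-\langle z,w\rangle)^{n+1}}f(w)\,dV(w).
\]
Then decompose $\B$ into a Whitney/dyadic tree of Bergman-type sets (Carleson tents or dyadic Bergman balls) and dominate $P$ by a positive dyadic (sparse-type) operator $\sum_{K}\mathbf{1}_{\hat K}\langle |f|\rangle_{\hat K}$, summed over tents $\hat K$.

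Next, insert the averages $\langle b\rangle_{\hat K}$, or the value of $b$ at the center of $K$. This splits the commutator into two pieces:
\[
|b(z)-b_K|\,\langle|f|\rangle_{\hat K}
\quad\text{and}\quad
\langle|b-b_K||f|\rangle_{\hat K}.
\]
The Bloch condition gives $|b(z)-b_{K'}|\lesssim \|b\|_{\mathcal B}\,(1+\beta(z,c_K))$, with logarithmic growth in the tree distance. This yields exponential integrability of $b-b_K$ on tents, uniformly in $K$, which is a John--Nirenberg property for $b$ on tents.

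Then I would follow P\'erez's scheme:
\begin{itemize}
\item Perform a Calder\'on--Zygmund decomposition at height $\lambda$ adapted to the tents.
\item Handle the good part with the $L^2$ bound; by Axler, $[\bar b,P]$ is bounded on $L^2$ since $b\in\mathcal B$.
\item For the bad part, use generalized H\"older with the $\exp L$--$\LlogL$ pairing:
\[
\langle|b-b_K||f|\rangle_{\hat K}\lesssim \|b\|_{\mathcal B}\,\|f\|_{\LlogL,\hat K}.
\]
Then use the weak-type estimate for the maximal operator $M_{\LlogL}\approx M\circ M$ over tents. This is where the $\LlogL$ right side appears.
\end{itemize}
The part $|b(z)-b_K|\langle|f|\rangle$ outside the exceptional set is controlled by integrating $|b-b_K|$ against the kernel tail, again using the logarithmic Bloch growth summed against geometric decay.

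\textbf{Step $(4)\Rightarrow(1)$.} This is the main obstacle. Fix $z$ and set $f=\mathbf{1}_{E(z,\hat r)}$. Since $b$ is holomorphic, the mean value property gives $P(\bar b f)(w)\approx$ a reproducing-type expression. In fact, for $w$ far from $E$ I would expand
\[
[\bar b,P]f(w)=\int_E \frac{\overline{b(w)}-\overline{b(u)}}{(1-\langle w,u\rangle)^{n+1}}\,dV(u)\approx |E|\,\frac{\overline{b(w)-b(z)}}{(1-\langle w,z\rangle)^{n+1}}+\text{error}.
\]
The error is controlled by the oscillation of $b$ on $E$, which is small if $\hat r$ is small. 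Here $\hat r$ is small relative to $r$.

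The idea is then to test on $w$ in a region where $|1-\langle w,z\rangle|\approx(1-|z|^2)$, but at Bergman distance about $R$ from $z$. On that region,
\[
|[\bar b,P]f(w)|\approx |b(w)-b(z)|.
\]
The region has measure $\approx|E|$, with constant depending on $R$. Applying (4) with $\lambda\approx\tfrac12\sup|b(w)-b(z)|$ over such $w$ shows that this supremum cannot be large. If it were $\lambda>1$, the superlevel set would contain a fixed proportion $c_R|E|$, forcing $c_R\le C/\lambda$.

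Hence $|b(w)-b(z)|\le M$ whenever $\beta(w,z)\approx R$. Cauchy estimates in the Bergman metric then bound $(1-|z|^2)|\nabla b(z)|$ uniformly, giving $b\in\mathcal B$. The delicate point is making the error term small uniformly without a priori Bloch control. For that, I would first assume $b$ is a dilate $b_\rho$, so that it is Bloch, derive the estimate with constants independent of $\rho$, and let $\rho\to1$.
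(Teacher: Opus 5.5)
\textbf{Main gap: $(4)\Rightarrow(1)$.} Your implications $(2)\Rightarrow(3)\Rightarrow(4)$ are fine and match the paper's. The step $(4)\Rightarrow(1)$ fails, for two reasons.

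First, there is no error term to control. For fixed $w\in\B$, the maps $u\mapsto(1-\langle w,u\rangle)^{-(n+1)}$ and $u\mapsto\overline{b(u)}(1-\langle w,u\rangle)^{-(n+1)}$ are anti-holomorphic, hence pluriharmonic. Since $E(z,\hat r)$ is a polydisk centered at $z$ in a unitary frame, the mean value property gives, for \emph{every} $w\in\B$,
\[
[\overline{b},P](\mathbf{1}_{E(z,\hat r)})(w)=c_n\,|E(z,\hat r)|\,\frac{\overline{b(w)}-\overline{b(z)}}{(1-\langle w,z\rangle)^{n+1}} .
\]
So your ``delicate point'' disappears. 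Your proposed fix would not have worked anyway. The radius $\hat r$ is fixed by hypothesis (4), so you cannot shrink it. Also, (4) is an assumption about $b$ alone, and you give no way to transfer it to the dilates $b_\rho$ with $\rho$-uniform constants.

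Second, and more seriously, the single-level argument fails. Taking $\lambda\approx\frac12\sup|b(w)-b(z)|$, nothing forces $\{|b-b(z)|>\lambda\}$ to fill a fixed proportion $c_R$ of your region. For general holomorphic functions this is false: for $w\mapsto\langle w-z,e_1\rangle^k$, the set where the modulus exceeds half its supremum over a fixed Bergman ball has relative measure tending to $0$ as $k\to\infty$.

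The missing idea is to use (4) for all $\lambda>1$ at once. Write $E=E(z,\hat r)$. On $E$ one has $|1-\langle w,z\rangle|^{n+1}\sim|E|$, so the identity gives $|[\overline b,P]\mathbf 1_{E}(w)|\sim|b(w)-b(z)|$ there. The layer-cake formula plus (4) then yields $\frac{1}{|E|}\int_{E}|b-b(z)|^{1/2}\,dV\lesssim 1+C$; note that weak-$L^1$ control only bounds $L^p$ averages with $p<1$. Next apply the sub-mean-value inequality for the plurisubharmonic function $|b-b(z)|^{1/2}$ on $D(w,r/3)\subset D(z,r)\subset E$, for $w\in D(z,r/3)$. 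This gives $|b(w)-b(z)|\lesssim(1+C)^2$, i.e.\ $b\in\mathrm{BO}$, hence $b\in\Bl$. This is exactly the paper's argument.

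\textbf{Secondary problem: $(1)\Rightarrow(2)$.} Your sketch here does not hang together. After dominating the commutator by the positive dyadic operators $\sum_K|b(z)-\langle b\rangle_{\wK}|\langle|f|\rangle_{\wK}\mathbf 1_{\wK}$ and $\sum_K\langle|b-\langle b\rangle_{\wK}||f|\rangle_{\wK}\mathbf 1_{\wK}$, you invoke tools that only apply to the linear commutator:
\begin{itemize}
\item Axler's $L^2$ bound for the good part.
\item A ``kernel tail with geometric decay'' for the bad part. This needs the mean-zero property of the bad functions together with smoothness of the Bergman kernel. For positive operators there is no cancellation. The $L^1$ norm, off the exceptional set, of the piece generated by a bad tent $\wK_j$ is then of order $\|f\|_{L^1(\wK_j)}$ times the number of dyadic ancestors of $K_j$, which is unbounded. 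This is the dyadic form of the logarithmic divergence that makes $P$ unbounded on $L^1$.
\end{itemize}

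Generalized H\"older plus the weak type of $M_{L\log L}$ only controls the exceptional set $\{M_\Psi f>\lambda/4\}$. It says nothing about $\sum_K\|f\|_{\Psi,\wK}\mathbf 1_{\wK}$ off that set, and that is where the paper does its real work. For $\mathcal T_b^*$ it uses the stopping-time layers $\mathcal F_{k,m}$ of Lemma \ref{EstimateTbStar}, with the packing bound $|S_k(K)|\le 2^{-2^k}|\wK|$ and the almost-disjoint sets $\widetilde E_K$. For $\mathcal T_b$ it adds the John--Nirenberg exceptional sets $O_K$ (Lemma \ref{EstimateTb}).

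Also, a Calder\'on--Zygmund decomposition over tents does not give a good part bounded by $C\lambda$, since tents do not shrink to interior points.

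A P\'erez-style proof could presumably be made to work, but only if run on $[\overline b,P]$ itself in $(\B,d_K,V)$. That requires a genuine differentiation basis, H\"ormander smoothness of the Bergman kernel in $d_K$, weak type $(1,1)$ of $P$, and BMO of $b$ over all Kor\'anyi balls.
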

\noindent We remark that the third condition is almost the same as a restricted weak-type $(1,1)$, except for the restriction $\lambda>1.$ In the fourth condition, $E(z,\hat{r})$ denotes a polydisk which is comparable to the Bergman metric ball of radius $r$ centered at $z \in \B$; see Section \ref{Prelim} for details.

Moreover, we obtain a refined endpoint distributional inequality when stronger conditions (exponential oscillation, defined precisely in Section \ref{Prelim}) are imposed on the symbol function $b$ (here we do not need a holomorphic or harmonic assumption because of the strong a priori control on the symbol's oscillation).

\begin{atheorem} \label{ExpOscThm} Suppose that $b$ belongs to $\osc_{(\exp L)^{1/\varepsilon}}$ for $\varepsilon \in (0,1].$
There exists $C>0$ so that the following holds for all bounded, compactly supported functions $f$ on $\B$ and $\lambda>0$:

$$ \left| \left \{ z \in \B: |[\bar{b},P]f(z)|>\lambda \right \}  \right | \leq C  \|b\|_{\osc_{(\exp L)^{1/\varepsilon}}} \int_{\B} \frac{|f(z)|}{\lambda} \left( 1+ \log^{+}\left( \frac{|f(z)| \|b\|_{\osc_{(\exp L)^{1/\varepsilon}}}}{\lambda}\right)\right)^\varepsilon \, dV(z).$$
    
\end{atheorem}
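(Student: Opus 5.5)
Following Pérez, the plan is to run a Calderón--Zygmund/Whitney-type decomposition adapted to the Bergman geometry on $\B$. Normalize $\|b\|_{\osc_{(\exp L)^{1/\varepsilon}}}=1$ by homogeneity in $b$ and by rescaling $\lambda$. Fix $f$ bounded and compactly supported and $\lambda>0$. Decompose $\B$ into a dyadic system of Carleson-type tents $\hat{Q}$ over boundary dyadic cubes, as in the usual proof of the weak-type $(1,1)$ bound for $P$. Select the maximal tents on which $\langle |f|\rangle_{\hat Q}>\lambda$, and write $f=g+h$ with $h=\sum_j h_j$, $h_j=(f-\langle f\rangle_{\hat Q_j})\mathbf 1_{\hat Q_j}$. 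We then have $\|g\|_\infty\lesssim\lambda$, $\sum_j|\hat Q_j|\le \lambda^{-1}\|f\|_1$, and each $h_j$ has mean zero.

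The good part is handled by the $L^2$ boundedness of $[\bar b,P]$, which follows from the oscillation hypothesis since $\osc_{(\exp L)^{1/\varepsilon}}\subset \BMO_2$ and Zhu's theorem applies. This gives $\lambda^{-2}\|g\|_2^2\lesssim\lambda^{-1}\|f\|_1$. For the bad part, discard the union of expanded tents $\widetilde{Q}_j$, whose total measure is at most $C\lambda^{-1}\|f\|_1$. On the complement, split
\[
[\bar b,P]h_j=(\bar b-\overline{b_{j}})Ph_j-P\big((\bar b-\overline{b_{j}})h_j\big),
\]
where $b_j=\langle b\rangle_{\hat Q_j}$. The first term is treated with the mean-zero cancellation of $h_j$ and the kernel smoothness estimate $|K(z,w)-K(z,w_j)|\lesssim \frac{d(w,w_j)}{|1-\langle z,w_j\rangle|^{n+1+1/2}}$ for $z$ away from $\widetilde{Q}_j$. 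Integrating $|b-b_j|$ against this decaying kernel over the dilated tents $2^k\widetilde{Q}_j$ yields $\sum_k k\,2^{-k/2}$-type sums, since averages of $b$ differ by $O(k)$ across $k$ dyadic generations. These sums converge, so this term contributes $\lesssim\lambda^{-1}\|f\|_1$ through Chebyshev's inequality.

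The second term, $P\big(\sum_j(\bar b-\overline{b_j})h_j\big)$, is the main obstacle and the place where $(\exp L)^{1/\varepsilon}$ enters. By the weak-type $(1,1)$ bound for $P$ (cf. Proposition \ref{LInfinityTrivial}), its distribution is at most $\lambda^{-1}\sum_j\int_{\hat Q_j}|b-b_j||h_j|$. Apply the generalized Hölder inequality on each tent with respect to normalized measure, pairing the Young functions $\Phi(t)=e^{t^{1/\varepsilon}}-1$ and its complement $\bar\Phi(t)\approx t(1+\log^+t)^{\varepsilon}$:
\[
\frac1{|\hat Q_j|}\int_{\hat Q_j}|b-b_j||f|\lesssim \|b-b_j\|_{\Phi,\hat Q_j}\,\|f\|_{\bar\Phi,\hat Q_j}\le \|f\|_{\bar\Phi,\hat Q_j}.
\]
Here we need $\osc_{(\exp L)^{1/\varepsilon}}$ to control oscillation on tents, not just on Bergman balls, and I expect to verify this from the definition in Section \ref{Prelim}, possibly after a chaining argument. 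The $\langle f\rangle$ part of $h_j$ is easier and is handled the same way with the $L^1$ norm.

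It remains to convert Luxemburg norms into modular form. The key point is to run the stopping time with $\bar\Phi$-averages in place of plain averages, i.e. to select maximal $\hat Q_j$ with $\|f\|_{\bar\Phi,\hat Q_j}>\lambda$. Then $|\hat Q_j|\le \int_{\hat Q_j}\bar\Phi(|f|/\lambda)$, so that
\[
\lambda^{-1}\sum_j|\hat Q_j|\,\|f\|_{\bar\Phi,\hat Q_j}\lesssim\sum_j|\hat Q_j|\lesssim\int_\B\frac{|f|}{\lambda}\Big(1+\log^+\frac{|f|}{\lambda}\Big)^{\varepsilon},
\]
using the doubling of $\bar\Phi$. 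Restoring the normalization of $b$ gives the stated inequality.
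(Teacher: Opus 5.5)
Your decomposition breaks at the claim $\|g\|_\infty\lesssim\lambda$. That bound rests on Lebesgue differentiation along the stopping family, but dyadic Carleson tents do not shrink to interior points. The only tents containing $z\in\B$ are $\wK$ for the kube $K\ni z$ and its ancestors, so the smallest one has measure $\sim(1-|z|)^{n+1}$. Off $\bigcup_j\widehat{Q}_j$ you therefore know only that tent averages (or $\bar\Phi$-norms) of $f$ are at most $\lambda$, not that $|f|\le\lambda$.

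Concretely, take $f=N\mathbf{1}_{B(0,\delta)}$ with $N\gg\lambda$ and $\delta$ so small that $\int_{\B}\bar\Phi(|f|/\lambda)\,dV\le|\B|$. Every tent meeting $B(0,\delta)$ is $\B$ itself, so no tent is selected under either stopping rule, and $g=f$. Your Chebyshev step then gives $\lambda^{-2}\|g\|_2^2=(N/\lambda)^2|B(0,\delta)|$. This is not controlled by $\int_{\B}\bar\Phi(|f|/\lambda)\,dV\approx (N/\lambda)\log^{\varepsilon}(N/\lambda)\,|B(0,\delta)|$. So the part of $f$ that is large inside individual kubes gets no estimate, and bounding $[\bar b,P]$ on it is essentially the original problem at the Bergman scale.

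Passing to a dyadic system for $(\B,d_K,V)$ whose cubes do shrink to points would restore $\|g\|_\infty\lesssim\lambda$. But then your H\"older and oscillation steps take place on small interior cubes, where the hypothesis gives no uniform control of $b$, since it only tests the tents $\wK$. Separately, your smoothness bound should read $d(w,w_j)^{1/2}/|1-\langle z,w_j\rangle|^{n+3/2}$. The complex tangential directions only give H\"older exponent $1/2$, and that is what actually produces your $2^{-k/2}$.

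The missing idea is that no cancellation is needed. Unlike for a Calder\'on--Zygmund operator, the positive operator $P^+$ is still dominated by a sparse-type dyadic operator. By Lemma \ref{lem:dyadicdomination}, this gives $|[\bar b,P]f(z)|\lesssim P^+\big(|b(z)-b(\cdot)||f|\big)(z)\lesssim\mathcal{T}_bf(z)+\mathcal{T}_b^*f(z)$. These positive dyadic operators see $f$ only through tent averages, so non-shrinking tents are harmless and there is no good part to control in $L^2$.

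The paper then handles the two pieces as follows:
\begin{itemize}
\item $\mathcal{T}_b^*$ is bounded by the generalized H\"older inequality with $\Phi_\varepsilon,\Psi_\varepsilon$, together with the level families $\mathcal{F}_k$ of Lemma \ref{EstimateTbStar} run with $\Psi_\varepsilon$.
\item $\mathcal{T}_b$ is bounded as in Lemma \ref{EstimateTb}. This uses the exponential decay of $|\{z\in\wK:|b(z)-\langle b\rangle_{\wK}|>t\}|$, which the $\osc_{(\exp L)^{1/\varepsilon}}$ hypothesis supplies directly on tents.
\end{itemize}
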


Finally, we obtain an estimate of a different flavor showing that the commutator maps the Orlicz space $\LlogL$ to $L^{1,\infty}$, which parallels the endpoint estimate for the commutator of the Bergman projection on the bidisk proven in \cite{HW}. Note that for this estimate, we only need to assume that $b \in \BMO$ is harmonic (not anti-holomorphic). 

\begin{atheorem} \label{WeakTypeMod}
Suppose that $b$ is harmonic on $\B$ and belongs to $\BMO.$ Then, the commutator of the Bergman projection $[\bar{b},P]$ satisfies the $\LlogL$ norm inequality 

$$ \|[\bar{b},P]f\|_{L^{1,\infty}(\B)} \leq C \|b\|_{\BMO} \|f\|_{\LlogL}, \quad f \in \LlogL.$$
\end{atheorem}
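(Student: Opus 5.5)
The strategy is to follow the Orlicz-space route used in \cite{HW} for the bidisk. The commutator $[\bar{b},P]$ is dominated by a positive operator built from the absolute Bergman kernel and the oscillation of $b$, and that operator is then controlled by an $\LlogL$-type maximal function. First write
$$[\bar{b},P]f(z)=c_n\int_{\B}\frac{(\overline{b(z)}-\overline{b(w)})f(w)}{(1-\langle z,w\rangle)^{n+1}}\,dV(w).$$
Next use the standard dyadic (tent/Carleson box) structure on $\B$ adapted to the Bergman metric, which was presumably introduced in Section \ref{Prelim}. This gives a pointwise domination of the positive Bergman operator $P^+$ by a finite sum of sparse-type dyadic operators $\sum_{K\in\mathcal{D}}\langle |g|\rangle_{\wK}\mathbf{1}_{\wK}(z)$, where $\wK$ is the tent over $K$. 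For the commutator this produces, for each dyadic system, the two terms
$$\sum_{K}|b(z)-\langle b\rangle_{\wK}|\,\langle |f|\rangle_{\wK}\mathbf{1}_{\wK}(z)+\sum_K\langle |b-\langle b\rangle_{\wK}||f|\rangle_{\wK}\mathbf{1}_{\wK}(z)=:T_1f+T_2f.$$
Harmonicity of $b$ enters here. It lets us treat $b$ as a function whose BMO norm over tents (i.e.\ against the Bergman/Carleson geometry) is controlled by $\|b\|_{\BMO}$. It also ensures the kernel differences behave well, so the domination holds with $\|b\|_{\BMO}$ in the bound.

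For $T_2$, apply the generalized Hölder inequality in Orlicz spaces with the dual Young pair $\exp L$ and $\LlogL$:
$$\langle |b-\langle b\rangle_{\wK}||f|\rangle_{\wK}\lesssim \|b-\langle b\rangle_{\wK}\|_{\exp L,\wK}\|f\|_{\LlogL,\wK}\lesssim\|b\|_{\BMO}\|f\|_{\LlogL,\wK}.$$
The middle norm is bounded by John--Nirenberg on tents, which I will need to verify for the tent geometry of harmonic BMO functions; it follows from the Calderón--Zygmund decomposition on the dyadic tent structure. Thus $T_2f\lesssim\|b\|_{\BMO}\,M_{\LlogL}f$ summed sparsely. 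The sum over a sparse family of $\|f\|_{\LlogL,\wK}\mathbf{1}_{\wK}$ maps $\LlogL$ into $L^{1,\infty}$. To see this, take the maximal tents where $\|f\|_{\LlogL,\wK}>\lambda$ and estimate their total measure by $\int\Phi(|f|/\lambda)$. Off their union, use the $L^2$ bound of the sparse operator applied to the truncated part. This mirrors the standard proof that $M_{\LlogL}$ is of modular weak type.

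For $T_1$, I will use duality in $L^{1,\infty}$. Equivalently, perform a Calderón--Zygmund decomposition of $f$ at height $\lambda$ in the dyadic tent structure, $f=g+\sum_j h_j$. The good part is handled by the $L^2$-boundedness of $[\bar b,P]$ (Axler/Zhu, since harmonic BMO suffices for $L^2$ boundedness). For the bad parts $h_j$ supported in tents $\wK_j$, set $E=\bigcup \widetilde{\wK}_j$ (enlarged tents) and note that $|E|\lesssim\|f\|_{L^1}/\lambda$. Outside $E$, the term $(\bar b(z)-\bar b_{\wK_j})P h_j$ has cancellation, and the smoothness of the Bergman kernel gives an integrable tail weighted by $|b(z)-b_{\wK_j}|$. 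Integrating against the geometric decay of the kernel across dyadic generations produces a factor $\sum_k k\,2^{-k\delta}\|b\|_{\BMO}$, which is finite. The remaining piece $P((\bar b-\bar b_{\wK_j})h_j)$ is treated like $T_2$ and gives the $\LlogL$ loss. The final form is obtained by homogeneity: replacing $f$ by $f/\|f\|_{\LlogL}$ turns the modular estimate into the norm inequality.

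I expect the main obstacle to be the off-support estimate for $T_1$ and the sparse domination near the boundary. Unlike Calderón--Zygmund kernels, the Bergman kernel has no smoothness uniformly down to $\partial\B$ relative to Euclidean cubes, so the cancellation must be exploited at the scale of Bergman tents. The logarithmic growth of $|b(z)-b_{\wK}|$ across nested tents, a consequence of BMO, has to be beaten by the kernel's decay $(1-|w|^2)^{\delta}/|1-\langle z,w\rangle|^{n+1+\delta}$. I would first try to obtain this via the Forelli--Rudin estimates applied after subtracting the value of the kernel at the tent's center.
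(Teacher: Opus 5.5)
Your proposal has genuine gaps in both the $T_2$ estimate and the $T_1$ estimate.

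\textbf{The $T_2$ estimate.} Your reduction to $T_1+T_2$ matches the paper, and so does the Orlicz--H\"older bound $T_2f\lesssim\|b\|_{\mathrm{BMO}}\,\mathcal{A}_\Psi f$ with $\mathcal{A}_\Psi f:=\sum_K\|f\|_{L\log L,\widehat K}\mathbf 1_{\widehat K}$; this is the first step of the proof of Theorem \ref{BoundedSuffThm2}. The justifications you sketch for the rest of the $T_2$ estimate, however, both fail for one reason. A point $z$ lies in only finitely many dyadic tents, the smallest of measure $\sim(1-|z|)^{n+1}$, so there is no Lebesgue differentiation along tents.

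\emph{Consequence 1.} John--Nirenberg on tents does not follow from a tent Calder\'on--Zygmund decomposition. It is in fact false for non-harmonic tent-$\mathrm{BMO}$ functions such as $|z|^{-\alpha}$. For harmonic $b$ it comes from $\mathrm{BO}$, as in the remark after Lemma \ref{lem:JohnNirenberg}.

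\emph{Consequence 2.} Off the union of the maximal tents there is no truncated part with an $L^2$ bound, since $|f|$ need not be $\lesssim\lambda$ there.

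Truncating to $f\mathbf 1_{\{|f|\le\lambda\}}$ instead does not help, because the large part cannot be handled by Chebyshev. The operator $\mathcal{A}_\Psi$ is not bounded from $L\log L$ to $L^1$: already $\int M_\Psi\mathbf 1_{\widehat K}\,dV$ exceeds $\|\mathbf 1_{\widehat K}\|_{L\log L}$ by a factor of the order of the generation of $K$.

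What is missing is a genuine level-set argument, such as the layer decomposition of Lemma \ref{EstimateTbStar} with its $2^{-2^k}$ decay. With it, your $T_2$ argument becomes the proof of Theorem \ref{BoundedSuffThm2}, whose modular estimate already implies the present norm inequality because $|\mathbb{B}_n|<\infty$.

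The paper's proof of this theorem avoids Orlicz averages altogether. Harmonicity and the sub-mean-value property give the pointwise telescoping bound of Lemma \ref{OscLemma}, namely $|b-\langle b\rangle_{\widehat K}|\mathbf 1_{\widehat K}\lesssim\|b\|_{\mathrm{BMO}}\sum_{J\preceq K}\mathbf 1_{\widehat J}$. Hence $T_2f\lesssim\|b\|_{\mathrm{BMO}}\,\mathcal{A}_{\mathcal D}(M_{\mathcal D}f)$, and one concludes from the weak $(1,1)$ bound for $\mathcal{A}_{\mathcal D}$ together with $\|M_{\mathcal D}f\|_{L^1}\lesssim\|f\|_{L\log L}$.

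\textbf{The $T_1$ estimate.} Here the flaw is more basic. $T_1$ is a positive operator acting on $|f|$, so the mean zero of the bad pieces $h_j$ and the regularity of the Bergman kernel are invisible to it. What you actually describe is P\'erez's Calder\'on--Zygmund argument for $[\bar b,P]$ itself. That would make the $T_1+T_2$ split pointless, and carrying it out would require:
\begin{itemize}
\item a decomposition over Kor\'anyi balls of all sizes, since dyadic tents give no bounded good part, for the reason above;
\item $\mathrm{BMO}$ of $b$ on all such balls, which is true for harmonic $b$ via $\mathrm{BO}$;
\item a H\"older-type estimate for the Bergman kernel in the Kor\'anyi quasi-metric strong enough to absorb the logarithmic growth of $|b-b_{B_j}|$ --- precisely the step you leave open.
\end{itemize}

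None of this is needed, because $T_1$ is even of strong type $L\log L\to L^1$. Since $|\widehat K|\sim|K|$ and the kubes of each grid partition $\mathbb{B}_n$,
\begin{align*}
\int_{\mathbb{B}_n}T_1f\,dV&=\sum_{K\in\mathcal D}\langle|f|\rangle_{\widehat K}\int_{\widehat K}|b-\langle b\rangle_{\widehat K}|\,dV\lesssim\|b\|_{\mathrm{BMO}}\sum_{K\in\mathcal D}\langle|f|\rangle_{\widehat K}\,|K|\\
&\lesssim\|b\|_{\mathrm{BMO}}\int_{\mathbb{B}_n}M_{\mathcal D}f\,dV\lesssim\|b\|_{\mathrm{BMO}}\|f\|_{L\log L}.
\end{align*}
The paper reaches the same bound through Lemma \ref{OscLemma}.

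The idea you are missing is the uniform sparseness of the whole tent family. Each $\widehat K$ carries its own kube $K$ of comparable measure, and these kubes are disjoint. This reduces both terms to the dyadic maximal function and the weak $(1,1)$ operator $\mathcal{A}_{\mathcal D}$.
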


This paper is organized as follows. In Section \ref{Prelim}, we collect some necessary background information on dyadic structures and geometry of the unit ball, BMO spaces, and Orlicz spaces. In Section \ref{WeakTypeFails}, we prove Theorem \ref{WeakTypeCounterexample}, Theorem \ref{StronglyConvex}, and Theorem \ref{HInfinity}. In Section \ref{NecessitySection}, we prove the necessity of the Bloch condition for holomorphic symbols. In Section \ref{SuffSection}, we complete the proof of Theorems \ref{BlochCharacterize}, \ref{ExpOscThm}, and \ref{WeakTypeMod}. Finally, in Section \ref{FurtherGen} we discuss some potential further generalizations of this work and future research directions. An Appendix is included at the end which contains the proofs of some geometric lemmas.

\section{Preliminaries} \label{Prelim}
\subsection{Notation} For a measurable set $E \subseteq \B$, we use the notation $|E|=V(E)$ to denote its Lebesgue measure. We use the notation $A \lesssim B$ to mean that there exists a constant $C$, independent of quantities except possibly the dimension $n$, so that $A \leq C B.$ We write $A \sim B$ if $A \lesssim B$ and $B \lesssim A.$ If $\|\cdot \|_A, \| \cdot \|_{B}$ are two seminorms (we will abuse language and refer to seminorms as simply norms in this paper), we write $\|\cdot\|_{A} \sim \|\cdot \|_{B}$ to indicate the norms are equivalent. Given any measurable set $A \subseteq \B$ and locally integrable function b, we use the integral average notation:
$$\langle b \rangle_{A}= \frac{1}{|A|} \int_{A} b(z) \, dV(z).$$

\subsection{Carleson Tents and Dyadic Decomposition of the Unit Ball}

Much of the material concerning the geometry of the unit ball and function spaces can be found in \cite{ZhuBallBook}. Given $z \in \B \setminus \{0\}$, we define the Carleson tent
$$ T_z= \bigg \{w \in \B: \bigg | 1-\frac{\langle z,w \rangle}{|z|} \bigg|<1-|z| \bigg \}.$$ When $z=0$, just set $T_z=\B.$ Carleson tents are important geometric objects in the study of the analysis of Bergman spaces because, for example, they form a Muckenhoupt basis characterizing weighted inequalities for the Bergman projection. However, it is slightly more convenient to approximate Carleson tents by certain dyadic sets, so we can use the machinery of modern dyadic harmonic analysis. We turn to that task next.

We briefly summarize the dyadic decomposition of the unit ball constructed in  \cite{ARS2006} or \cite{RTW}. To begin with, we fix two real parameters $\theta_0, \lambda>0$. For convenience, we assume $\theta_0$ is chosen large enough so that $(n+1)\theta_0>1.$  It is known that there exist dyadic sets on the unit sphere $\partial \B$ which induce dyadic tents inside $\B$ which reflect the geometry of the Bergman kernel appropriately. In particular, let $\rho(z,w)$ be the quasi-metric on $\partial \B$ given by $\rho(z,w)=|1-\langle z, w \rangle|$. We can approximate balls in this quasi-metric by finitely many dyadic systems $\{ \mathcal{Q}_{\ell}\}_{\ell=1}^N$, where $N \in \mathbb{Z}_{+}$ only depends on the ambient dimension $n$. Each dyadic system consists of a collection of Borel sets $Q_j^k$, where $j,k \in \mathbb{N}$ and $k$ corresponds to the dyadic scale of size $\delta^k, \delta=e^{-2\theta_0}.$ For each fixed dyadic system $\mathcal{Q}_{\ell}$, two sets $Q_{j_1}^{k_1}$ and $Q_{j_2}^{k_2}$ are either disjoint or one is contained in the other, for each fixed $k$, the $Q_j^k$ partition $\partial \B$, and each $Q_j^k$ is comparable to a quasi-ball in the $\rho$ metric of radius $\delta^k$.

Let $P_k z$ denote the radial projection of a point $z \in \overline{\B} \setminus \{0\}$ onto the Bergman metric sphere $\{z \in \B: d_\beta(z,0)=k \theta_0\}.$ Set $S_j^k= P_k (Q_j^k)$, or the radial projection of the boundary set $Q_j^k$ to the Bergman sphere with radius $k \theta_0$. We then define, for each boundary dyadic system $\mathcal{Q}_{\ell}$, a system of dyadic ``kubes'' $\mathcal{D}_{\ell}=\{K_j^k\}$ via
$$ K_1^0= \{z \in \B: d_\beta(z,0)<\theta_0\},$$
$$K_j^k:= \{z \in \B: k \theta_0 \leq  d_\beta(z,0) < (k+1)\theta_0, \text{ and } P_k z \in S_j^k \}.$$
Each kube has a center $c_j^k \in K_j^k$ (obtained by projecting certain reference points on $\partial \B$). We put a tree structure on the kubes as follows: we say $K_j^{k+1}$ is a child of $K_\ell^k$ if $P_k(c_j^{k+1}) \in K_\ell^k$. We use the notation $J \preceq K$ or $J \in \mathcal{D}(K)$ to indicate that $J$ is a dyadic descendant of $K$, and define the dyadic Carleson tents:
\begin{equation} \wK:= \bigcup_{J \in \mathcal{D}(K)} J. \end{equation}
For convenience, we denote the union of all these dyadic grids 
$$ \mathcal{D}:= \bigcup_{\ell=1}^N \mathcal{D}_\ell.$$
These kubes and dyadic Carleson tents satisfy the following important properties, which are stated in \cite{RTW, StockdaleWagner2023}:
\begin{proposition} \label{DyadicProperties} For each dyadic system, $\mathcal{D}_{\ell}$ $1 \leq \ell \leq N$, the following properties hold:
\begin{enumerate} 
\item The kubes $\{K_j^k\}$ partition $\B$;
\item The dyadic Carleson tents are dyadically nested; that is, either two tents are disjoint or one is contained in the other;
\item If $K_j^k \in \mathcal{D}_\ell$, then $|\wK_j^k| \sim |K_j^k| \sim e^{-2(n+1)\theta_0 k} $
\item Each kube has at most $e^{2 n \theta_0}$ children. 
\end{enumerate}
Moreover, the list of collections $\mathcal{D}_1,\cdots, \mathcal{D}_N$ collectively approximate all Carleson tents in the following precise sense:
\begin{enumerate}
    \item If $z \in \B$, then there exists $K \in \mathcal{D}$ so $\wK \supset T_z$ and $|\wK| \sim |T_z|$;
    \item Conversely, if $K \in \mathcal{D}$, then there exists $z' \in \B$ so $T_{z'} \supset \wK$ and $|T_{z'}|\sim |\wK|.$
\end{enumerate}
\end{proposition}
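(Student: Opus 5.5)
The proof follows from the explicit form of the Bergman distance to the origin, $d_\beta(z,0)=\tfrac12\log\frac{1+|z|}{1-|z|}$. On the Bergman sphere of radius $k\theta_0$ this gives $1-|z|\sim e^{-2\theta_0 k}=\delta^k$. So the $k$-th annulus $\{k\theta_0\le d_\beta(z,0)<(k+1)\theta_0\}$ is a Euclidean shell of thickness $\sim\delta^k$, with constants depending only on $\theta_0$. I would take the boundary dyadic systems $\mathcal{Q}_\ell$ as given by the construction for spaces of homogeneous type (Christ / Hytönen–Kairema adjacent systems) for the quasi-metric $\rho$ on $\partial\B$. This supplies nestedness, the partition property at each scale, comparability of $Q_j^k$ to $\rho$-balls of radius $\delta^k$, and the ``finitely many adjacent grids'' property.

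Property (1) is then immediate. The annuli partition $\B$, and within the $k$-th annulus the sets $K_j^k$ are the preimages under $P_k$ of the partition $\{S_j^k\}_j=\{P_k(Q_j^k)\}_j$.

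For (3), note that $\sigma(Q_j^k)\sim\delta^{kn}$, since a $\rho$-ball of radius $r$ has surface measure $\sim r^n$. Integrating in polar coordinates over a shell of thickness $\sim\delta^k$ then gives $|K_j^k|\sim\delta^{k(n+1)}=e^{-2(n+1)\theta_0k}$.

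Property (2) is purely combinatorial. The child relation $P_k(c_j^{k+1})\in K_\ell^k$ assigns each level-$(k+1)$ kube a unique parent, since the level-$k$ kubes partition the $k$-th annulus. Hence $\mathcal{D}_\ell$ is a tree. Descendant sets of two tree nodes are either nested or disjoint, and since kubes are pairwise disjoint the same holds for the tents $\wK$.

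The geometric heart of the argument is a two-sided containment. Let $\mathcal{T}(Q)$ denote the Euclidean ``tent'' $\{w:\ w/|w|\in Q,\ 1-|w|\lesssim \delta^k\}$ over a boundary set $Q$, and let $cB$ denote the concentric $\rho$-ball of $c$ times the radius. The claim is that for a boundary ball $B_j^k$ of radius $\sim\delta^k$,
\[
\mathcal{T}(B_j^k)\subseteq \wK_j^k\subseteq \mathcal{T}(C B_j^k).
\]
For the right inclusion, follow the ancestor chain of a descendant $J$ at level $m>k$. Consecutive ancestors have centers whose boundary projections are $\rho$-close at the scales $\delta^{m},\delta^{m-1},\dots,\delta^{k}$. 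By the quasi-triangle inequality the accumulated error is a geometric series $\lesssim\sum_{i\ge k}\delta^{i}\lesssim\delta^k$, provided $\theta_0$ is large so that $\delta$ is small relative to the quasi-triangle constant. The left inclusion is the step I expect to be the main obstacle. One must show that every point sufficiently deep over the \emph{core} of $Q_j^k$ descends from $K_j^k$, i.e.\ that its ancestor chain cannot escape through a neighbor. I would prove this using the same geometric drift bound together with the fact that dyadic cubes contain a $\rho$-ball of comparable radius around their reference point, again choosing $\theta_0$ large.

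Given the two-sided containment, $|\wK_j^k|\sim\delta^{k(n+1)}$ follows, completing (3). For (4), the children's boundary pieces are disjoint sets of measure $\sim\delta^{(k+1)n}$ lying in $C B_j^k$, which has measure $\lesssim\delta^{kn}$. This gives at most $\lesssim\delta^{-n}=e^{2n\theta_0}$ children, with constants absorbed by adjusting the construction or taking $\theta_0$ large.

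For the approximation statements, observe that $T_z$ is comparable to $\mathcal{T}(B)$ for the $\rho$-ball $B$ centered at $z/|z|$ of radius $\sim 1-|z|$. Pick $k$ with $\delta^k\sim 1-|z|$. By the adjacent-systems property, some $\mathcal{Q}_\ell$ contains a cube $Q_j^{k'}\supseteq B$ with $k'=k-O(1)$. The inclusion $\mathcal{T}(B_j^{k'})\subseteq \wK_j^{k'}$ then gives $T_z\subseteq\wK$, and the sizes are comparable. Conversely, given $K$ at level $k$, the inclusion $\wK\subseteq\mathcal{T}(CB_j^k)$ is contained in the Carleson tent $T_{z'}$ whose vertex $z'$ lies over the center of $B_j^k$ with $1-|z'|\sim C'\delta^k$, and again $|T_{z'}|\sim|\wK|$.
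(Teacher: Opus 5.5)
The paper gives no proof of this proposition. It quotes it from Rahm--Tchoundja--Wick and Stockdale--Wagner, where the kubes are built over nested Hytönen--Kairema systems on $\partial\B$. Your architecture is the standard one: shells of width $\sim\delta^k$, $\sigma(Q_j^k)\sim\delta^{kn}$, adjacent systems on the sphere, and comparison of $T_z$ with a box over a $\rho$-ball of radius $\sim 1-|z|$. Items (1) and (2), the measure of kubes, the upper bound $|\wK_j^k|\lesssim\delta^{k(n+1)}$, and the converse approximation are fine.

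\textbf{The gap is in the first approximation statement.} You only propose to prove $\mathcal{T}(B_j^{k'})\subseteq\wK_j^{k'}$ for the \emph{core} ball $B_j^{k'}$ around the reference point. The adjacent-systems property, however, gives $B\subseteq Q_j^{k'}$, not $B\subseteq B_j^{k'}$. The cores do not cover the cubes, and since $B$ has radius comparable to $\delta^{k'}$, it can reach the edge of $Q_j^{k'}$. So ``$T_z\subseteq\wK$'' does not follow from what you establish. Nor can your drift argument simply be pushed to the whole cube: in a tree whose boundary cells are not nested (ARS/Voronoi type), deep points near the edge of $Q_j^{k'}$ can belong to a neighbour's tent, so the full-cube inclusion can fail.

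\textbf{The missing observation is that, in the nested setting you yourself assumed, there is no drift at all.} Since $c_j^{k+1}\in K_j^{k+1}$, the direction of $c_j^{k+1}$ lies in $Q_j^{k+1}$. By nestedness, $Q_j^{k+1}\subseteq Q_\ell^k$ for a unique $\ell$, so $P_k(c_j^{k+1})\in K_\ell^k$. Thus the parent kube is exactly the kube over the parent boundary cube. Iterating, $J\preceq K_j^k$ if and only if the boundary cube of $J$ lies in $Q_j^k$. Because the level-$m$ cubes inside $Q_j^k$ partition it, this gives
\[
\wK_j^k=\{w\in\B:\ d_\beta(w,0)\ge k\theta_0,\ w/|w|\in Q_j^k\}.
\]
This identity has three consequences:
\begin{itemize}
\item It gives the inclusion over all of $Q_j^k$ at once, so your ``main obstacle'' disappears.
\item It yields (2) directly, and also $|\wK_j^k|=\sigma(Q_j^k)\int_{\tanh(k\theta_0)}^1 t^{2n-1}\,dt\sim\delta^{k(n+1)}$, where $\sigma$ is surface measure.
\item Together with $B\subseteq Q_j^{k'}$ for $k'=k-O(1)$, it genuinely gives $T_z\subseteq\wK_j^{k'}$.
\end{itemize}

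A minor point on (4): your counting argument gives at most $Ce^{2n\theta_0}$ children, and taking $\theta_0$ large cannot absorb a multiplicative constant. The $\lesssim e^{2n\theta_0}$ form is what the argument proves.
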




Let $P^{+}$ denote the positive Bergman operator on $\B$, which is obtained by replacing the Bergman kernel by its modulus:

$$ P^{+}f(z):= c_n \int_{\B} \frac{f(w)}{|1-\langle z, w \rangle|^{n+1}}\, dV(w), \quad f \in L^1(\B), z \in \B.$$
The following pointwise estimate for the Bergman projection is well-known. Essentially, it says that the Bergman projection is pointwise dominated by a simple dyadic averaging operator. 
\begin{lemma}[\cite{PR2013}, Proposition 3.4 or \cite{RTW}, Lemma 5] \label{lem:dyadicdomination}

The following upper bound  holds uniformly for all $f \in L^1(\B)$ and $z \in \B$:

\begin{equation}
|P^+f(z)| \lesssim \sum_{K \in \mathcal{D}} \langle |f| \rangle_{\widehat{K}} \mathbf{1}_{\widehat{K}}(z).
\label{eq:dyadicdomination}
\end{equation}
\end{lemma}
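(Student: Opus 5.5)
The plan is the standard sparse-type domination argument for $P^+$, with the dyadic tents of Proposition \ref{DyadicProperties} used to approximate the regions where the kernel $|1-\langle z,w\rangle|^{-(n+1)}$ is large. Fix $z\in\B$ and split $\B$ into the dyadic annuli
\[
A_j(z)=\{w: 2^{j-1}(1-|z|)\le |1-\langle z,w\rangle|<2^{j}(1-|z|)\},\qquad j\ge 0,
\]
where $A_0(z)$ collects all $w$ with $|1-\langle z,w\rangle|<1-|z|$. Only $j$ with $2^j(1-|z|)\lesssim 1$ are relevant, since $|1-\langle z,w\rangle|\le 2$. On $A_j(z)$ the kernel is $\sim (2^j(1-|z|))^{-(n+1)}$. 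Each $A_j(z)$ is contained in a nonisotropic set
\[
\{w: |1-\langle z', w\rangle| \lesssim 2^j(1-|z|)\},
\]
where $z'$ is a point on the ray through $z$ with $1-|z'|\sim 2^j(1-|z|)$. This set is essentially a Carleson tent $T_{z'}$ (after enlarging by a fixed factor), and $|T_{z'}|\sim (1-|z'|)^{n+1}$. Consequently
\[
\int_{A_j(z)}\frac{|f(w)|}{|1-\langle z,w\rangle|^{n+1}}\,dV(w)\lesssim \frac{1}{|T_{z'}|}\int_{T_{z'}}|f|\,dV=\langle |f|\rangle_{T_{z'}}.
\]
Moreover $z\in T_{z'}$, because $|1-\langle z',z\rangle/|z'||\lesssim 1-|z'|$ holds once the constants are tuned, or after a fixed dilation of the tent.

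Next, replace each tent by a dyadic one. By the approximation property in Proposition \ref{DyadicProperties}, there is $K\in\mathcal D$ with $\wK\supset T_{z'}$ and $|\wK|\sim|T_{z'}|$. Then $\langle|f|\rangle_{T_{z'}}\lesssim\langle|f|\rangle_{\wK}$ and $z\in\wK$. As $j$ varies, the scales $1-|z'|$ are distinct dyadic scales, so the corresponding tents $\wK$ are distinct up to a bounded multiplicity; there are at most boundedly many kubes in $\mathcal D$ at each generation with comparable measure containing $z$. Summing over $j$ therefore gives
\[
P^+|f|(z)\lesssim \sum_{K\in\mathcal D}\langle|f|\rangle_{\wK}\mathbf 1_{\wK}(z).
\]
The bound for $|P^+f(z)|$ follows since $|P^+f|\le P^+|f|$.

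The main obstacle is the geometric step: showing that the annulus $\{|1-\langle z,w\rangle|\lesssim r\}$ lies in a tent of comparable measure that also contains $z$. This requires the quasi-triangle inequality for $\rho(z,w)=|1-\langle z,w\rangle|$ extended into the interior of the ball, i.e.\ $|1-\langle z,w\rangle|^{1/2}\le|1-\langle z,u\rangle|^{1/2}+|1-\langle u,w\rangle|^{1/2}$, together with the fact that tents defined with a dilated constant $C(1-|z|)$ still have measure $\sim(1-|z|)^{n+1}$. Those dilated tents are then handled by the same approximation property, applied at a point $z''$ with $1-|z''|=C(1-|z'|)$. I would first verify the measure estimate $|\{w:|1-\langle \zeta,w\rangle|<r\}|\sim r^{n+1}$ for $\zeta\in\partial\B$, which is standard via polar coordinates in \cite{ZhuBallBook}, and then assemble the pieces above.
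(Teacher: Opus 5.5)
Your argument is correct. The paper does not prove this lemma itself; it cites \cite{PR2013} and \cite{RTW}, where the argument is a pointwise domination of the kernel rather than an annular decomposition.

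\textbf{The cited route.} For any $z,w\in\B$ one finds a tent $\wK$ in one of the $N$ grids with $z,w\in\wK$ and $|\wK|\lesssim|1-\langle z,w\rangle|^{n+1}$. Hence
\[
\frac{1}{|1-\langle z,w\rangle|^{n+1}}\lesssim\sum_{K\in\mathcal D}\frac{\mathbf 1_{\wK}(z)\,\mathbf 1_{\wK}(w)}{|\wK|},
\]
and integrating against $|f(w)|$ gives \eqref{eq:dyadicdomination} immediately.

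\textbf{Comparison.} Your decomposition of $w$-space into the annuli $A_j(z)$ is the integrated form of the same geometric fact. Fixing $z$ first means you only need tents along the ray through $z$. The cost is the extra bounded-multiplicity step from $\sum_j$ to $\sum_{K\in\mathcal D}$. You justify that step correctly: if $K_j=K_{j'}$, then $|\wK_j|\sim(2^j(1-|z|))^{n+1}$ forces $|j-j'|\lesssim 1$. The kernel version is a statement about the kernel alone and needs no counting. Otherwise the two proofs are equally short.

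\textbf{Simplifications.} The step you flag as the main obstacle needs no quasi-triangle inequality. With $\zeta=z/|z|$,
\[
|1-\langle\zeta,w\rangle|\le|1-\langle z,w\rangle|+(1-|z|).
\]
So $A_j(z)\cup\{z\}\subset T_{z''}$ with $z''=(1-2^{j+1}(1-|z|))\zeta$, directly from the definition of $T_{z''}$. Also, $A_0(z)=\emptyset$, because $|1-\langle z,w\rangle|\ge 1-|z|$. Finally, the boundedly many $j$ with $2^j(1-|z|)\gtrsim 1$, and the case of $z$ near $0$, are absorbed by the top tent $\wK_1^0=\B$.
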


\subsection{The Bergman Metric, Polydisks, and Kor\'{a}nyi Quasi-Balls}
Many of the following results are scattered across the literature or present in the folklore while lacking explicit proofs. We aim to provide this requisite background with the goal of making this paper reasonably self-contained.

For $z,w \in \B$, we denote by $d_\beta(z,w)$ the hyperbolic (Bergman) distance between $z$ and $w$, and for $r>0$ we set $D(z,r)=\{w \in \B: d_\beta(w,z)<r\}.$ In other words, $D(z,r)$ is the ball of radius $r$ centered at $z$ in the hyperbolic metric. Explicitly, 

\begin{equation}
d_\beta(z,w)= \frac{1}{2} \log \left(\frac{1+|\varphi_z(w)|}{1-|\varphi_z(w)|} \right),
\end{equation}
where $\varphi_z$ denotes the involutive automorphism of $\B$ that interchanges $z$ and $0$.

Given $z \in \B \setminus \{0\}$ and $w \in \B$, set $P_z w= \left \langle w, \frac{z}{|z|} \right  \rangle \frac{z}{|z|}$, the orthogonal projection of $w$ onto the one-dimensional complex subspace spanned by $z$, and let $Q_z w= w- P_zw$ denote the projection onto the orthogonal complement. 

We set up an orthonormal coordinate system which respects these projections. In particular, given $z \in \B \setminus \{0\}$, let $e_1=\frac{z}{|z|}$, and let $e_2,\cdots, e_n$ be any orthonormal vectors in $\C^n$ that span the orthogonal complement of $\text{span }(e_1)$. We can think of $e_1$ as the complex radial direction and $e_2,\cdots, e_n$ as the complex tangential directions.  Note that the choice of $e_2,\cdots, e_n$ is arbitrary up to a unitary transformation in the subspace $\mathbb{C}^{n} \ominus \text{span }(e_1) $, but we will think of such a basis as being fixed for each $z \in \B \setminus \{0\}$ for the remainder of the paper. Represent $w$ in these coordinates via $w= \sum_{j=1}^n \xi_j e_j$. For ease of notation in what follows, let  
$$E(z,r)=\left\{ w \in \C^n: |P_z\, w-z|<r, \quad |\xi_j|<r^{1/2} \text{ for } j=2,\cdots,n \right\}.$$ Note that $E(z,r)$ is a polydisk centered at $z$ with radius $r$ in the radial complex direction $e_1$ and radius $r^{1/2}$ in the complex tangential directions $e_2,\cdots, e_n$. 


If $z \in \B \setminus \{0\}$ and $r>0$, by \cite[Exercise 1.1] {ZhuBallBook}, the Bergman metric ball $D(z,r)$ is equal to the following ellipsoid centered at $c$:

 $$ \bigg\{w \in \B: \frac{\left | \left \langle w, \frac{z}{|z|^2} \right  \rangle z-c \right|^2}{R^2 \sigma^2}+ \frac{\left|w- \left \langle w, \frac{z}{|z|^2}  \right  \rangle z   \right| ^2}{R^2 \sigma}<1 \bigg\},$$
 where
$$R=\tanh{r}, \quad c=\frac{(1-R^2)z}{1-R^2|z|^2}, \quad \sigma= \frac{1-|z|^2}{1-R^2 |z|^2}.$$
The following result is well-known to the experts, but we provide a proof with precise constants in the Appendix for completeness. 
\begin{proposition} \label{ellipsoidspolydisks}
Suppose $z \in \B \setminus \{0\}$, $r>0$, $R$, $\sigma$ are as above, and $\tanh{r}\leq \frac{1}{2}$. Then we have the polydisk containments.

$$ E\left(z, \frac{R^2 \sigma}{3n}\right)\subset  D(z,r) \subset E(z,2 R \sigma)=: E(z, \hat{r}). $$
The upper containment holds for all $r>0.$

\end{proposition}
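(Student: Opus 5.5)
Since $D(z,r)$ is given explicitly as an ellipsoid, I will prove both containments directly in the coordinates $w=\sum_j \xi_j e_j$ adapted to $z$, with $e_1=z/|z|$. Then $\langle w, z/|z|^2\rangle z = \xi_1 e_1 = P_z w$ and $w-\langle w,z/|z|^2\rangle z = Q_z w$, so $|Q_zw|^2=\sum_{j\ge2}|\xi_j|^2$. The ellipsoid becomes
$$\frac{|\xi_1 - |c||^2}{R^2\sigma^2}+\frac{\sum_{j\ge 2}|\xi_j|^2}{R^2\sigma}<1,$$
since $c = \frac{1-R^2}{1-R^2|z|^2}z$ is a positive multiple of $e_1$. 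Both inclusions then reduce to elementary comparisons in the variables $\xi_1$ and $(\xi_2,\dots,\xi_n)$, using the key identity
$$|z|-|c| = |z|\,\frac{(1-R^2|z|^2)-(1-R^2)}{1-R^2|z|^2} = \frac{R^2|z|(1-|z|^2)}{1-R^2|z|^2}=R^2|z|\sigma\le R^2\sigma .$$

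\emph{Upper containment.} Take $w\in D(z,r)$. Then $|\xi_1-|c||<R\sigma$, so
$$|P_zw - z| = |\xi_1-|z|| \le |\xi_1-|c||+(|z|-|c|)<R\sigma+R^2\sigma\le 2R\sigma,$$
because $R<1$. For the tangential directions, $|\xi_j|^2<R^2\sigma\le 2R\sigma$, since $R\le 2$. Hence $|\xi_j|<(2R\sigma)^{1/2}$ and $w\in E(z,2R\sigma)$. This argument never uses $R\le\frac12$, so it holds for every $r>0$.

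\emph{Lower containment.} Let $\rho=R^2\sigma/(3n)$ and take $w\in E(z,\rho)$.
\begin{itemize}
\item Radial term: $|\xi_1-|c||\le |\xi_1-|z||+(|z|-|c|)<\rho+R^2\sigma$. Since $\rho\le R^2\sigma/3$, this gives $|\xi_1-|c||<\tfrac43 R^2\sigma$. Dividing by $R\sigma$ and using $R\le\frac12$ gives $|\xi_1-|c||/(R\sigma)<\tfrac43 R\le\tfrac23$, so the first ellipsoid term is at most $4/9$.
\item Tangential term: $\sum_{j\ge2}|\xi_j|^2<(n-1)\rho=\frac{(n-1)R^2\sigma}{3n}$, so the second term is less than $\frac{n-1}{3n}<\frac13$.
\end{itemize}
The total is $<\frac49+\frac13<1$. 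We must also check $w\in\B$. This is automatic, because the ellipsoid inequality together with $D(z,r)\subset\B$ (Zhu's description is an identity of sets) places $w$ in the Bergman ball. To avoid relying on that, one can verify it directly: $|w|^2\le(|z|+\rho)^2+(n-1)\rho$, and this should be $<1$ using $\rho\lesssim R^2(1-|z|^2)\le \frac14(1-|z|)(1+|z|)$.

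\emph{Main obstacle.} The delicate step is the lower containment near the boundary, where constants must be tracked so that $w$ stays inside $\B$ and the ellipsoid sum stays below $1$. The centre shift $|z|-|c|=R^2|z|\sigma$ is exactly of the order of the radial radius $\rho$. This is why the hypothesis $\tanh r\le\frac12$ and the factor $3n$ are needed.

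For $w\in\B$ I would argue as follows. Write $1-|z|^2=\sigma(1-R^2|z|^2)\ge\sigma(1-R^2)\ge\frac34\sigma$. Then
$$|w|^2\le |z|^2+2\rho+\rho^2+(n-1)\rho\le |z|^2+(n+2)\rho .$$
We need $(n+2)\rho<1-|z|^2$. Since $(n+2)\rho=\frac{(n+2)R^2\sigma}{3n}\le\frac{R^2\sigma}{1}$ for $n\ge1$, and $R^2\sigma\le\frac14\sigma<\frac34\sigma\le 1-|z|^2$, this holds.
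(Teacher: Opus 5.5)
Your proof is correct and follows the paper's argument: both containments come from the explicit ellipsoid description of $D(z,r)$ combined with the triangle inequality and the centre shift $|z-c| = R^2|z|\sigma$; the only difference is cosmetic, since you bound $|\xi_1-|c||$ linearly (getting $4/9$) where the paper squares the triangle inequality (getting $5/9$). Your additional check that $E\left(z,\frac{R^2\sigma}{3n}\right)\subset\B$, using $1-|z|^2=\sigma(1-R^2|z|^2)\ge \frac34\sigma$, is correct and supplies a step the paper leaves implicit.
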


Let $d_K$ denote the Kor\'{a}nyi quasi-metric: 

$$ d_K(z, w):= \begin{cases} \bigg||z|-|w| \bigg|+ \bigg| 1- \frac{\langle z, w \rangle}{|z|\, |w|}\bigg| & z, w \in \B \setminus \{0\}\\ |z|+|w| & z \text{ or } w=0\end{cases}.$$
Given $z \in \B$ and $r>0$, we define the Kor\'{a}nyi ball $B_K(z,r):=\{w \in \B: d_K(z,w)<r\}$.
Equipped with this quasi-metric and the usual Lebesgue measure, the unit ball with Kor\'{a}nyi balls becomes a space of homogeneous type in the sense of Coifman and Weiss in \cite{CoifmanWeiss}. This distance has played an integral role in the study of weighted estimates for the Bergman projection. Balls $B$ in the Kor\'{a}nyi quasi-metric  that satisfy $ \bar{B} \cap \partial \B \neq \emptyset$ provide an equivalent formulation of Carleson tents; see, for example, \cite{Bekolle, BekolleBesov, BekolleVariable}. Additional details are provided in Lemma \ref{Kor'{a}nyiCarleson}.

Balls in the Kor\'{a}nyi metric are equivalent to certain polydisks (and hence also equivalent to appropriately scaled complex ellipsoids), which the following Lemma makes precise. 

\begin{lemma} \label{Kor'{a}nyiContainment}
There exists $c>0$ depending only on the ambient dimension $n$ so the following containments hold for $r \in (0,1), |z|\geq 1/2$:

$$ E(z, cr) \subset B_K(z,r) \subset E(z,2r).$$

\end{lemma}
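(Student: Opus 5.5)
Set $z=|z|e_1$ and write $w=\sum_j \xi_j e_j$, so that $P_z w=\xi_1 e_1$ and $|Q_z w|^2=\sum_{j\ge 2}|\xi_j|^2$. Put $w_1:=\xi_1$. Since $|z|,|w|\le 1$ and $|z|\ge 1/2$, we have $|w|\ge 1/2$ in both containments once $c$ and $r$ are small enough that every point involved lies away from the origin. The plan is to compare $d_K(z,w)=\bigl||z|-|w|\bigr|+\bigl|1-\langle z,w\rangle/(|z||w|)\bigr|$ with the two quantities defining $E(z,\cdot)$, namely $|w_1-|z||$ and $|Q_zw|^2$. We have $\langle z,w\rangle=|z|\,\overline{w_1}$, so
$$1-\frac{\langle z,w\rangle}{|z||w|}=1-\frac{\overline{w_1}}{|w|}=\frac{|w|-\overline{w_1}}{|w|}.$$
Hence $d_K(z,w)\sim \bigl||z|-|w|\bigr|+\bigl||w|-\overline{w_1}\bigr|$, with constants between $1$ and $2$ because $1/2\le |w|\le 1$.

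\emph{Upper containment.} Suppose $d_K(z,w)<r$. The triangle inequality gives
$$|w_1-|z||\le \bigl||w|-\overline{w_1}\bigr|+\bigl||w|-|z|\bigr|\le |w|\,d_K(z,w)+d_K(z,w)\le 2r.$$
Since $|w|\le 1$, this bound is at most $2r$. The estimate should be tightened to a strict inequality, or one can absorb the slack by using $|w|<1$. For the tangential part, write $|Q_zw|^2=|w|^2-|w_1|^2=(|w|-|w_1|)(|w|+|w_1|)\le 2(|w|-|w_1|)$. Also $|w|-|w_1|\le \operatorname{Re}(|w|-\overline{w_1})\le |w|\,|1-\overline{w_1}/|w||<r$. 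Therefore $|\xi_j|^2\le|Q_zw|^2<2r$. This gives $|\xi_j|<(2r)^{1/2}$, which is exactly the tangential radius of $E(z,2r)$. So $B_K(z,r)\subset E(z,2r)$, modulo the strict-versus-nonstrict bookkeeping in the radial bound.

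\emph{Lower containment.} Suppose $w\in E(z,cr)$, so $|w_1-|z||<cr$ and $|Q_zw|^2<(n-1)cr$. Then
$$\bigl||w|-|z|\bigr|\le \bigl||w|-|w_1|\bigr|+\bigl||w_1|-|z|\bigr|.$$
For the first term, $|w|-|w_1|=|Q_zw|^2/(|w|+|w_1|)\lesssim |Q_zw|^2$. Here we use $|w_1|\ge |z|-cr\ge 1/4$ for $c\le 1/4$. So this term is $\lesssim (n-1)cr$, and the second term is $<cr$. Similarly,
$$\bigl||w|-\overline{w_1}\bigr|\le \bigl||w|-|z|\bigr|+\bigl||z|-\overline{w_1}\bigr|\lesssim ncr,$$
using $||z|-\overline{w_1}|=|w_1-|z||$. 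Dividing by $|w|\ge 1/4$ gives $d_K(z,w)\le C_n c r$. Choosing $c$ small, depending only on $n$, makes this $<r$.

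The key step is the observation that the tangential size enters $d_K$ quadratically, through $|w|-|w_1|\approx |Q_zw|^2$. This is what matches the $r^{1/2}$ tangential radius of the polydisk. The remaining work is bookkeeping of constants, including checking that $|w|$ is bounded below so that the divisions are harmless. The only place needing care is the exact constant $2$ in the upper containment. If the crude bound only yields $2r$ with non-strict inequality, I would refine it using $|w|<1$, or note that the triangle inequality is strict except in degenerate cases.
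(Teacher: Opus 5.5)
Your proof is correct and is essentially the paper's argument. In coordinates adapted to $z$, you split $d_K(z,w)$ into radial and angular parts via the triangle inequality, and use $|w|^2=|w_1|^2+|Q_zw|^2$ to see that the tangential size enters quadratically in both containments. Your step $|w|-|w_1|=|Q_zw|^2/(|w|+|w_1|)$ is in fact cleaner than the paper's, which writes $4|z-P_zw|^2$ where only a linear bound holds.

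The strictness you worry about is automatic, since $\bigl|w_1-|z|\bigr|\le |w|\,\bigl|1-\frac{\langle z,w\rangle}{|z||w|}\bigr|+\bigl||z|-|w|\bigr|\le d_K(z,w)<r$. Your opening claim $|w|\ge 1/2$ fails in the upper containment for $r$ near $1$, but you never use it there. Finally, exactly as in the paper, the lower containment really yields $E(z,cr)\cap\B\subset B_K(z,r)$, because $E(z,cr)$ can leave $\B$ when $1-|z|$ is small compared with $cr$.
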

\noindent We remark in passing that the hypotheses $|z| \geq \frac{1}{2}$, $r \in (0,1)$ are only necessary to assume for the lower containment.

Finally, we need the following geometric lemma relating Carleson tents and Kor\'{a}nyi balls. This lemma shows that we can either consider Carleson tents or a family of Kor\'{a}nyi balls when computing, for example, a $B_p$ weight characteristic. This result seems to be implicit throughout the literature, although we are unaware of a direct statement. 

\begin{lemma} \label{Kor'{a}nyiCarleson}

Given $z \in \B$, there exists a radius $r \geq (1-|z|)$ and Kor\'{a}nyi ball $B_K(z,r)$ so that the Carleson tent $T_z \subset B_K(z,r)$ and $|T_z| \sim B_K(z,r)$, with implicit constants independent of $z$.

Conversely, given $z \in \B$ and $r  \geq (1-|z|)$, there exists $\widetilde{z} \in \B$ so $T_{\widetilde{z}} \supset B_K(z,r)$ and $|T_{\widetilde{z}}| \sim |B_K(z,r)|$ with constants independent of $z$. 
\end{lemma}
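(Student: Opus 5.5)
Both directions come down to explicit comparisons between the tent $T_z$ and a Korányi ball, carried out in the coordinates adapted to $z$. Write $w=\sum_j \xi_j e_j$ with $e_1=z/|z|$, so that $\langle w, z/|z|\rangle=\xi_1$. For $z=0$ (or $|z|$ small, say $|z|<1/2$) the tent $T_z$ has measure comparable to $1$, and the whole ball $\B\subset B_K(z,r)$ once $r$ is a large absolute constant, since $d_K$ is bounded on $\B\times\B$. That case is trivial, so I will assume $|z|\geq 1/2$ and set $\delta=1-|z|$.

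\emph{First statement.} Take $w\in T_z$, i.e. $|1-\xi_1|<\delta$. Then $1-|w|\le 1-|\xi_1|\le|1-\xi_1|<\delta$, so $\big||w|-|z|\big|\le\big|(1-|w|)-\delta\big|<\delta$. For the angular term I write
$$1-\frac{\langle z,w\rangle}{|z||w|}=1-\frac{\bar\xi_1}{|w|}=\frac{(|w|-1)+(1-\bar\xi_1)}{|w|},$$
whose modulus is at most $2\delta/|w|$. A small separate check is needed since $|w|$ could a priori be small, but $|\xi_1|>1-\delta\ge 1/2$ gives $|w|\ge1/2$, so the angular term is at most $4\delta$. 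Hence $T_z\subset B_K(z,5\delta)$, and I take $r=5\delta\ge 1-|z|$. For the measure comparison, the upper containment of Lemma \ref{Kor'{a}nyiContainment} (which holds for all $r$) gives $|B_K(z,5\delta)|\le|E(z,10\delta)|\lesssim\delta^{n+1}$. The reverse bound $|T_z|\gtrsim\delta^{n+1}$ follows because $T_z$ contains the polydisk $\{|\xi_1-(1-\delta/2)|<\delta/4,\ |\xi_j|<c\delta^{1/2}\}$ for small $c$; one checks membership in $\B$ since $|\xi_1|^2+\sum_{j\ge2}|\xi_j|^2\le(1-\delta/4)^2+(n-1)c^2\delta<1$. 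Alternatively one can quote $|T_z|\sim(1-|z|)^{n+1}$ from \cite{ZhuBallBook}.

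\emph{Converse.} Given $r\ge\delta$, I split into two cases.
\begin{itemize}
\item If $r\ge c_0$ for a fixed small $c_0$, I take $\widetilde z=0$, so $T_{\widetilde z}=\B$. Then $|B_K(z,r)|\gtrsim|E(z,c\,c_0)\cap\B|\gtrsim1$, using the lower containment of Lemma \ref{Kor'{a}nyiContainment} with $r$ replaced by $\min(r,1/2)$.
\item If $r<c_0$, I take $\widetilde z=(1-Mr)z/|z|$ with $M$ a large constant (with $c_0M<1/2$). For $w\in B_K(z,r)$ we have $1-|w|\le\delta+r\le 2r$ and $|1-\xi_1/|w||<r$. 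This gives
$$|1-\xi_1|\le|1-|w||+|w|\,\Big|1-\frac{\xi_1}{|w|}\Big|<3r<Mr=1-|\widetilde z|.$$
Since $\widetilde z/|\widetilde z|=e_1$, this says exactly that $w\in T_{\widetilde z}$.
\end{itemize}
The measures are comparable: $|T_{\widetilde z}|\sim(Mr)^{n+1}\sim r^{n+1}$. On the other side, $|B_K(z,r)|\ge|E(z,cr)\cap\B|$ by Lemma \ref{Kor'{a}nyiContainment}.

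The main obstacle is this last lower bound. When $r\gg\delta$ the polydisk $E(z,cr)$ protrudes well outside $\B$, so I need $|E(z,cr)\cap\B|\gtrsim r^{n+1}$. I would handle it by exhibiting the sub-polydisk centered at $(1-2cr/3)e_1$ with radial radius $cr/3$ and tangential radii $c'\sqrt{r}$. It lies inside $E(z,cr)$ because $|1-2cr/3-|z||\le\delta+2cr/3$; here there is a mild tension with $\delta\le r$, which I will resolve by shrinking the radial radius and using $\delta\le r$ appropriately, e.g. centering at $(1-\max(\delta,cr/2))e_1$. It lies inside $\B$ by the same norm check as before. Alternatively, I can verify $B_K(z,r)\supset T_{z'}$ for $z'=(1-c''r)e_1$ directly, using the first part of the proof in reverse, which avoids the polydisk bookkeeping entirely.
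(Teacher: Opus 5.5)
Your proof is correct and follows essentially the paper's route: the same two containments, with $\widetilde z$ chosen on the ray through $z$ so that $1-|\widetilde z|\sim r$. Your first containment, via $1-|w|\le|1-\xi_1|<1-|z|$, is a bit more direct than the paper's $P_z/Q_z$ splitting. You also verify the measure comparisons that the paper leaves implicit, notably $|B_K(z,r)|\gtrsim r^{n+1}$ via a polydisk centered at $(1-\max(1-|z|,cr/2))e_1$.

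Drop the alternative in your last sentence, however. It fails when $1-|z|$ is close to $r$: for $1-|z|=r$ no tent lies inside $B_K(z,r)$ at all, since $\bigl||w|-|z|\bigr|\to 1-|z|=r$ as $|w|\to 1$.
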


\subsection{The Bloch Space, BMO, and Oscillation Spaces}

The Bloch space $\Bl$ of holomorphic functions on $\B$ is a classical space of holomorphic functions defined by the condition

$$ \Bl= \{ f \in \text{Hol}(\B): \sup_{z \in \B} |\nabla f(z)|(1-|z|)<\infty   \},$$
and we put $\|f\|_{\Bl}= \sup_{z \in \B} | \nabla f(z)|(1-|z|).$ Here, $\nabla$ denotes the holomorphic gradient.

We define $\text{BMO}$ to be the space of locally integrable functions $b$ on $\B$ which satisfy

$$ \|b\|_{\text{BMO}} := \sup_{z \in \B} \frac{1}{|T_z|} \int_{T_z} |b- \langle b \rangle_{T_z}| \, dV< \infty.$$
Since any Carleson tent is well-approximated by a dyadic tent, the BMO norm can be equivalently expressed testing only on dyadic tents in finitely many collections:
$$ \|b\|_{\text{BMO}} \sim \sup_{K \in \mathcal{D}}\frac{1}{|\wK|} \int_{\wK} |b- \langle b \rangle_{\wK}| \, dV $$

This Bergman form of BMO is typically defined by averaging over balls in the hyperbolic metric rather than Carleson tents, but they are equivalent. In particular, if $r>0$, we say $b \in \BMO_r$ if $$ \|b\|_{\text{BMO}_r} := \sup_{z \in \B} \frac{1}{|D(z,r)|} \int_{D(z,r)} |b- \langle b \rangle_{D(z,r)}| \, dV< \infty.$$
The following result is well-known to the experts and has proved in more general settings in the literature (see, for example \cite{HHLPW}), but we present a clean sketch of the proof in $\B$ in the appendix to keep the manuscript readable and self-contained. We note this result is similar to \cite[Theorem 5.25]{ZhuBallBook}, but we give a dyadic proof that illuminates the strong exponential decay that makes this property possible.
\begin{proposition} \label{prop:BMOequivalence}
For each $r>0$, there holds $\BMO=\BMO_r$ and moreover
$$ \| \cdot \|_{\BMO} \sim_r \|\cdot \|_{\BMO_r}.$$ 
\end{proposition}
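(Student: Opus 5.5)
We prove the two inequalities $\|b\|_{\BMO_r}\lesssim_r \|b\|_{\BMO}$ and $\|b\|_{\BMO}\lesssim_r\|b\|_{\BMO_r}$ separately, working throughout with the dyadic formulation $\|b\|_{\BMO}\sim \sup_{K\in\mathcal{D}}|\wK|^{-1}\int_{\wK}|b-\langle b\rangle_{\wK}|\,dV$. We use two standard facts repeatedly: for any set $A$ and constant $c$,
$$\frac{1}{|A|}\int_A|b-\langle b\rangle_A|\le \frac{2}{|A|}\int_A|b-c|,$$
and if $A\subset A'$ with $|A'|\le C|A|$, then the oscillation over $A$ is at most $2C$ times the oscillation over $A'$.

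\emph{Step 1: $\BMO\subset\BMO_r$.} Fix $z\in\B$. If $|z|$ is bounded away from $1$ (say $|z|\le 1-\epsilon_r$), then $D(z,r)$ has volume comparable to $1=|\B|=|T_0|$, and we compare directly with the tent $T_0=\B$. Otherwise, Proposition \ref{ellipsoidspolydisks} gives $D(z,r)\subset E(z,\hat r)$ with $\hat r\sim_r 1-|z|$. Lemma \ref{Kor'{a}nyiContainment} then gives $E(z,\hat r)\subset B_K(z,C\hat r)$. After enlarging the radius so that $C\hat r\ge 1-|z|$, Lemma \ref{Kor'{a}nyiCarleson} produces a tent $T_{\tilde z}\supset B_K(z,C\hat r)$ with $|T_{\tilde z}|\sim|B_K|$. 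All of these volumes are $\sim_r (1-|z|)^{n+1}\sim|D(z,r)|$, so the comparable-containment fact yields $\osc_{D(z,r)}b\lesssim_r\|b\|_{\BMO}$.

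\emph{Step 2: $\BMO_r\subset\BMO$, the main obstacle.} Fix $K=K_j^k\in\mathcal D_\ell$. We write $\wK=\bigcup_{m\ge0}\bigcup_{J\in\mathcal D(K),\,\mathrm{gen}(J)=k+m}J$. Each kube $J$ has bounded hyperbolic diameter (depending on $\theta_0$), so we choose a center $c_J$ and a fixed radius $R_0=R_0(\theta_0)$ with $J\subset D(c_J,R_0)$ and $|J|\sim|D(c_J,R_0)|$.

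The first sub-step is to pass from radius $r$ to radius $R_0$. A ball $D(c,R_0)$ can be covered by a bounded number $N(r,R_0)$ of balls $D(\cdot,r)$, which are chained through overlaps of comparable volume; averages of adjacent balls in the chain differ by $\lesssim\|b\|_{\BMO_r}$. This gives $\|b\|_{\BMO_{R_0}}\lesssim_r\|b\|_{\BMO_r}$.

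The second sub-step is to control averages along the tree. If $J'$ is a child of $J$, their hyperbolic distance is bounded, so $|\langle b\rangle_{J}-\langle b\rangle_{J'}|\lesssim\|b\|_{\BMO_{R_0}}$, by the same chaining (both sets sit in a common hyperbolic ball of comparable volume). Iterating, any $J$ of generation $k+m$ satisfies
$$|\langle b\rangle_J-\langle b\rangle_K|\lesssim m\|b\|_{\BMO_r}.$$

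The third sub-step is to sum. We estimate
$$\int_{\wK}|b-\langle b\rangle_K|\le\sum_{J\preceq K}\Big(\int_J|b-\langle b\rangle_J|+|J|\,|\langle b\rangle_J-\langle b\rangle_K|\Big)\lesssim\|b\|_{\BMO_r}\sum_{m\ge0}(1+m)\sum_{\mathrm{gen}(J)=k+m}|J|.$$
By Proposition \ref{DyadicProperties}, there are at most $e^{2n\theta_0m}$ descendants at depth $m$, each of volume $\sim e^{-2(n+1)\theta_0(k+m)}$. The inner sum is therefore $\lesssim e^{-2(n+1)\theta_0k}e^{-2\theta_0m}\sim|\wK|e^{-2\theta_0m}$. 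This exponential decay beats the linear growth $1+m$, so the series converges to $\lesssim|\wK|$. Using the first standard fact with $c=\langle b\rangle_K$, we conclude $\osc_{\wK}b\lesssim_r\|b\|_{\BMO_r}$.

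The delicate points are the counting/volume decay bound and the uniform bounded-step chaining between adjacent kubes, since the parent–child relation is defined via $P_k(c_j^{k+1})\in K_\ell^k$. We handle the latter by noting that a child lies within bounded Bergman distance of its parent, because $P_k$ moves points a hyperbolic distance at most $\sim\theta_0$ radially.
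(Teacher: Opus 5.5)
Your proposal is correct and follows essentially the paper's route. Step 1 uses the same chain of comparable-volume containments $D(z,r)\subset E(z,\hat r)\subset B_K\subset T_{\tilde z}$; the paper goes one step further to a dyadic tent, which is not needed. Step 2 is the same decomposition of $\widehat{K}$ into kubes, where the average differences grow linearly in the depth $m$ but are beaten by the $e^{-2\theta_0 m}$ volume decay coming from the child count. The differences are cosmetic: you prove the radius-independence of $\BMO_r$ by chaining where the paper cites Zhu. Also, your parent--child comparison really needs $\|b\|_{\BMO_{R_1}}$ for some radius $R_1(\theta_0)$ larger than $R_0$ (the paper uses $3R$), and your chaining sub-step handles that in the same way.
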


The following proposition clarifies the relationship between Bloch functions and BMO. 
\begin{proposition} \label{prop:BlochBMO}{\cite[Theorem 5.22]{ZhuBallBook}}
There holds $\BMO \cap \textnormal{Hol}(\B)= \mathcal{B}$ and 
$\|\cdot\|_{\BMO} \sim \|\cdot \|_{\mathcal{B}}.$

\end{proposition}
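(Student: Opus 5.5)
The plan is to reduce both directions to the hyperbolic-ball space $\BMO_r$ for one fixed small $r$ (say $\tanh r\le \tfrac12$). By Proposition \ref{prop:BMOequivalence}, $\|\cdot\|_{\BMO}\sim_r\|\cdot\|_{\BMO_r}$, so it suffices to prove $\BMO_r\cap\textnormal{Hol}(\B)=\Bl$ with $\|f\|_{\BMO_r}\sim\|f\|_{\Bl}$.

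Both directions use the Möbius invariant gradient $\widetilde\nabla f(z)=\nabla(f\circ\varphi_z)(0)$. I will use the standard fact (\cite[Theorem 3.4]{ZhuBallBook}) that for holomorphic $f$,
$$\sup_{z\in\B}(1-|z|^2)|\nabla f(z)|\sim\sup_{z\in\B}|\widetilde\nabla f(z)|.$$
This step needs care, and I expect it to be the main obstacle. The paper's weight $(1-|z|)$ looks too weak in the complex tangential directions, where the natural weight is $(1-|z|)^{1/2}$. For holomorphic functions this defect is absorbed by the cited result, so I would invoke it rather than reprove it. A direct proof would apply Cauchy estimates on the polydisks $E(z,\cdot)$ of Proposition \ref{ellipsoidspolydisks}.

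\emph{Bloch implies $\BMO_r$.} Since $|\widetilde\nabla f|\lesssim\|f\|_{\Bl}$ and $\widetilde\nabla$ is the gradient with respect to the Bergman metric, integrating along a Bergman geodesic gives the Lipschitz bound
$$|f(z)-f(w)|\lesssim\|f\|_{\Bl}\,d_\beta(z,w).$$
For $w\in D(z,r)$ this yields $|f(w)-f(z)|\lesssim r\|f\|_{\Bl}$. Using $f(z)$ as the comparison constant, the mean oscillation over $D(z,r)$ is then at most $2\sup_{w\in D(z,r)}|f(w)-f(z)|\lesssim_r\|f\|_{\Bl}$. Taking the supremum over $z$ gives $\|f\|_{\BMO_r}\lesssim\|f\|_{\Bl}$.

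\emph{$\BMO_r$ and holomorphic implies Bloch.} Fix $z$ and set $c=\langle f\rangle_{D(z,r)}$. Since $D(z,r)=\varphi_z(\{|u|<R\})$ with $R=\tanh r$, we can change variables $w=\varphi_z(u)$. On $|u|<R$ the real Jacobian of $\varphi_z$ is $\bigl((1-|z|^2)/|1-\langle u,z\rangle|^2\bigr)^{n+1}$. This is comparable, with constants depending only on $r$, to $(1-|z|^2)^{n+1}\sim|D(z,r)|$, so averages over the Euclidean ball and the Bergman ball are comparable. The function $g=f\circ\varphi_z-c$ is holomorphic on $|u|<R$. Cauchy's estimate together with the sub-mean value property for $|g|$ gives
$$|\nabla g(0)|\lesssim_R\frac{1}{|\{|u|<R\}|}\int_{|u|<R}|g|\,dV\sim\frac{1}{|D(z,r)|}\int_{D(z,r)}|f-c|\,dV\le\|f\|_{\BMO_r}.$$
Here $\nabla g(0)=\widetilde\nabla f(z)$, so $\sup_z|\widetilde\nabla f(z)|\lesssim\|f\|_{\BMO_r}$. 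The gradient equivalence above then gives $\|f\|_{\Bl}\lesssim\|f\|_{\BMO_r}$.

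Combining the two directions with Proposition \ref{prop:BMOequivalence} gives $\BMO\cap\textnormal{Hol}(\B)=\Bl$ with $\|\cdot\|_{\BMO}\sim\|\cdot\|_{\Bl}$.
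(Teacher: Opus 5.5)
Your proof is correct. The paper gives no argument for this proposition and simply quotes Zhu's book. As the paper itself remarks before Proposition~\ref{prop:BMOequivalence}, the standard formulation of Bergman-type BMO averages over Bergman balls rather than Carleson tents. So the citation implicitly relies on Proposition~\ref{prop:BMOequivalence} to reach the paper's tent-based $\BMO$, and that is exactly your first reduction.

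After that reduction, your two directions are the standard invariant-gradient argument:
\begin{itemize}
\item For $\Bl\subseteq\BMO_r$ you use the $d_\beta$-Lipschitz bound.
\item For the converse you use the change of variables $w=\varphi_z(u)$, whose Jacobian is comparable to $(1-|z|^2)^{n+1}$ on $\{|u|<\tanh r\}$, followed by Cauchy's estimate for $f\circ\varphi_z-\langle f\rangle_{D(z,r)}$.
\end{itemize}

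Your write-up also shows where the real content lies, which the citation hides. Only $\Bl\subseteq\BMO$ needs the nontrivial self-improvement $\sup_z|\widetilde\nabla f(z)|\lesssim\sup_z(1-|z|^2)|\nabla f(z)|$. This is what compensates for the weak weight $1-|z|$ that the paper's definition of $\Bl$ puts on complex-tangential directions. The converse only needs the elementary bound $(1-|z|^2)|\nabla f(z)|\le|\widetilde\nabla f(z)|$. That bound is immediate from $\varphi_z'(0)=-(1-|z|^2)P_z-(1-|z|^2)^{1/2}Q_z$.

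One caution about your parenthetical remark. Cauchy estimates on $E(z,c(1-|z|))$ bound the complex-tangential derivatives at $z$ only by a multiple of $(1-|z|)^{-1/2}\sup_{E(z,c(1-|z|))}|f-f(z)|$. Bounding that supremum by $\|f\|_{\Bl}$ is essentially the bounded-oscillation statement you are trying to prove, so that route is circular as stated. Since you invoke the known gradient equivalence instead, this does not affect your proof.
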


Say a continuous function $b$ on $\B$ belongs to $\BO_r$ for $r>0$ if

$$ \|b\|_{\BO_r}:= \sup \{ |b(z)-b(w)|:  z,w \in \B; \, d_\beta(z,w) \leq r \}<\infty.$$
 It is a fact that $\|b\|_{\BO_r} \sim \|b\|_{BO_s}$ (assuming one of the norms is finite), so we write $\BO$ for the corresponding function space which is independent of $r$ and write $\|b\|_{\BO}= \|b\|_{\BO_1}$ for convenience. See \cite[pp. 330]{BBCZ} for the proof of this fact.

Next we introduce exponential oscillation function spaces, which are refinements of the BMO condition and were first introduced in P\'{e}rez in \cite{Perez} in the setting of $\R^n$. Their appearance in the Bergman setting is new, to our best knowledge.

 \begin{definition}
 Let $1 \leq r< \infty$. We say that a locally integrable function $b$ on $\B$ belongs to $\osc_{(\exp L)^r}$ if there exists $c>0$ so that

$$ \sup_{K \in \mathcal{D}}\frac{1}{|\widehat{K}|} \int_{\widehat{K}} \exp\left( \left | \frac{b(w)-\langle b \rangle_{\widehat{K}}}{c} \right |^r \right) dV(w)<\infty.$$
The infimum of such constants $c$ such that the resulting supremum is bounded by $1$ is denoted by $\|b\|_{\osc_{(\exp L)^r}}.$

 \end{definition}
 It is obvious that different parameter choices of $r$ lead to a nested scale of spaces: if $r_1 \leq r_2,$ then $\osc_{(\exp L)^{r_2}} \subseteq \osc_{(\exp L)^{r_1}}.$ 
 When $r=1$, and $b$ is additionally a holomorphic function on $\B$, we recover the classical Bloch space:

 \begin{proposition} \label{prop:oscspaces}
  There holds $\osc_{(\exp L)} \cap \, \emph{Hol} (\B)= \mathcal B.$ 
 \end{proposition}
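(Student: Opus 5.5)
Since every function in $\osc_{(\exp L)}$ lies in $\BMO$, the inclusion $\osc_{(\exp L)} \cap \textnormal{Hol}(\B) \subseteq \mathcal{B}$ is immediate from Proposition \ref{prop:BlochBMO}. Indeed, if $b \in \osc_{(\exp L)}$ with constant $c$, then the elementary inequality $t \le e^{t}$ gives, for every $K \in \mathcal{D}$,
$$\frac{1}{|\wK|}\int_{\wK} |b-\langle b\rangle_{\wK}|\,dV \le c\,\frac{1}{|\wK|}\int_{\wK}\exp\left(\left|\frac{b-\langle b\rangle_{\wK}}{c}\right|\right)dV \le c.$$
By the dyadic characterization of the $\BMO$ norm, $b\in \BMO$. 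If $b$ is also holomorphic, Proposition \ref{prop:BlochBMO} then places $b$ in $\mathcal{B}$.

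The reverse inclusion is a John--Nirenberg type statement for Bloch functions on dyadic tents, and I would prove it directly. Fix $b\in\mathcal{B}$ and $K\in\mathcal{D}$ with center $c_K$, and assume for now that $K$ is not the root kube $K_1^0$. For $w\in \wK$, I would compare $b(w)$ with $b(c_K)$ using the standard growth estimate $|b(w)-b(c_K)|\lesssim \|b\|_{\mathcal{B}}\,(1+d_\beta(w,c_K))$. Then, since $d_\beta(w,0)$ is comparable to $k\theta_0$ for $w$ in a generation-$k$ kube, the plan is to show that
$$d_\beta(w,c_K)\le j\theta_0 + C \quad \text{for } w\in J,\ J\preceq K \text{ of generation } k+j.$$
Geometrically, a point of $\wK$ sits below $c_K$ inside the tent, so its hyperbolic distance to $c_K$ is essentially the depth difference. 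One way to see this is to join $w$ to $c_K$ through the chain of ancestors of $J$, where consecutive kube centers lie at hyperbolic distance $O(1)$. Alternatively, one can use $\varphi_{c_K}$ together with the estimate $|1-\langle w,c_K\rangle|\sim 1-|c_K|^2$ on the tent.

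Next I would count volume by generation. By Proposition \ref{DyadicProperties}, the union of the descendants of $K$ at relative depth $j$ has measure at most $C e^{-2\theta_0 j}|\wK|$. This holds because those descendants are contained in the tent $\wK$ but lie in the shell at depth $(k+j)\theta_0$, whose thickness relative to the tent decays like $e^{-2\theta_0 j}$. This is the same exponential decay used in the dyadic proof of Proposition \ref{prop:BMOequivalence}. Combining the two bounds gives
$$\frac{1}{|\wK|}\int_{\wK}\exp\left(\frac{|b-b(c_K)|}{c}\right)dV \lesssim \sum_{j\ge 0} e^{-2\theta_0 j}\, e^{C'\|b\|_{\mathcal{B}}(j\theta_0+C)/c}.$$
This sum is bounded by a constant independent of $K$ once $c\ge C''\|b\|_{\mathcal{B}}$.

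To replace $b(c_K)$ by $\langle b\rangle_{\wK}$, I would observe that the same estimate with exponent $1$ gives $|\langle b\rangle_{\wK}-b(c_K)|\lesssim \|b\|_{\mathcal{B}}$. The resulting factor $e^{O(\|b\|_{\mathcal{B}}/c)}$ is harmless, and enlarging $c$ brings the supremum below $1$. The root kube is handled identically with $c_K=0$. This yields $\|b\|_{\osc_{(\exp L)}}\lesssim\|b\|_{\mathcal{B}}$.

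The main obstacle is the geometric claim that every point of a depth-$j$ descendant is within hyperbolic distance $j\theta_0+O(1)$ of $c_K$, with a constant uniform in $K$. Should this prove delicate, I would fall back on a weaker linear bound $d_\beta(w,c_K)\le Aj+C$ for some fixed $A$, and then choose $c$ large relative to $A\|b\|_{\mathcal{B}}/\theta_0$ so that the series still converges.
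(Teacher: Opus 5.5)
Your proof is correct. For the nontrivial inclusion $\mathcal{B}\subseteq\osc_{(\exp L)}$ it takes a genuinely different route from the paper.

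\emph{The paper's route.} It first establishes $\sup_B |B|^{-1}\int_B|b-\langle b\rangle_B|\,dV<\infty$ over \emph{all} Kor\'anyi balls $B$. Boundary-touching balls are handled through their comparability with Carleson tents and $\mathcal{B}=\BMO\cap\textnormal{Hol}(\B)$; small interior balls are handled through the bounded-oscillation property of Bloch functions. It then gets exponential integrability from the abstract John--Nirenberg inequality on the space of homogeneous type $(\B,d_K,V)$, and transfers this back to dyadic tents.

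\emph{Your route.} You bypass all of this by working on a single tent. The Bergman-metric Lipschitz bound gives only linear growth $|b-b(c_K)|\lesssim\|b\|_{\mathcal{B}}(j\theta_0+C)$ on the depth-$j$ layer of $\wK$. That layer has relative measure $\lesssim e^{-2\theta_0 j}$, from the child count times $|J|\sim e^{-2(n+1)\theta_0(k+j)}$. This is the same mechanism as in the paper's dyadic proof of Proposition \ref{prop:BMOequivalence}.

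Your geometric claim holds. The identity $1-|\varphi_{c_K}(w)|^2=(1-|c_K|^2)(1-|w|^2)/|1-\langle w,c_K\rangle|^2$, combined with $|1-\langle w,c_K\rangle|\lesssim 1-|c_K|$ on $\wK$, gives the sharp coefficient $\theta_0$. The latter bound follows from containing $\wK$ in a comparable Carleson tent and using the triangle inequality for $|1-\langle\cdot,\cdot\rangle|^{1/2}$. The ancestor chain only gives $Aj+C$ with a possibly larger $A$, but as you say, that still suffices.

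\emph{What each route buys.} Yours is self-contained, makes the dependence $c\sim\|b\|_{\mathcal{B}}$ explicit, and avoids both the homogeneous-space John--Nirenberg theorem and the interior/boundary case analysis. It also applies verbatim to any $b\in\BO$, for instance the harmonic $\BMO$ symbols in the remark after Lemma \ref{lem:JohnNirenberg}, since bounded oscillation alone gives $|b(z)-b(w)|\lesssim\|b\|_{\BO}(1+d_\beta(z,w))$. The paper's route yields exponential integrability on every Kor\'anyi ball, not only on tents, at the cost of quoting external results.

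\emph{A caveat on normalization.} This concerns the paper's definition, not your argument. Since $\exp\ge 1$, the average over $\wK$ is at least $1$, and equals $1$ only if $b$ is a.e.\ constant there. So no choice of $c$ brings the supremum below $1$. Membership only requires finiteness, which you prove. With $e^{t}-1$ in place of $e^{t}$, your series bound does tend to $0$ uniformly in $K$ as $c\to\infty$.
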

 The proof of Proposition \ref{prop:oscspaces} is also presented in the appendix.

Now, suppose $b \in \mathcal{B}$. Since $b$ satisfies the BMO condition
$$ \sup_{B} \frac{1}{|B|} \int_{B} |b-\langle b \rangle_{B}| \, dV< \infty$$
for all Kor\'{a}nyi balls $B$ by the proof of Proposition \ref{prop:oscspaces}, $b$ also satisfies the following John-Nirenberg estimate (since Carleson tents are geometrically equivalent to Kor\'{a}nyi balls by Lemma \ref{Kor'{a}nyiCarleson}): 

 \begin{lemma} \label{lem:JohnNirenberg}
A function $b \in \mathcal{B}$ satisfies the John-Nirenberg inequality on dyadic Carleson cubes $\widehat{K}$ with $ K \in \mathcal{D}$. More precisely, there exist constants $C_1, C_2>0$ so that for any $b \in \mathcal{B}$, $K \in \mathcal{D}$ and $\lambda>0$,

$$|\{z \in\wK: |b(z)-\langle b \rangle_{\wK}|>\lambda\}| \leq C_1 \exp\left(-\frac{C_2  \lambda}{\|b\|_{\mathcal{B}}}\right) |\wK|.$$

 \end{lemma}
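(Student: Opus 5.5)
The plan is to reduce the statement to the classical John--Nirenberg theorem in a space of homogeneous type, and then move from Korányi balls to dyadic tents with the geometric lemmas already established. The text just above says that a Bloch function $b$ satisfies a BMO condition on Korányi balls, with constant $\lesssim \|b\|_{\mathcal{B}}$. This comes from the proof of Proposition \ref{prop:oscspaces}, or from Proposition \ref{prop:BlochBMO} combined with Lemma \ref{Kor'{a}nyiCarleson}. I will also use that $(\B, d_K, V)$ is a space of homogeneous type in the sense of Coifman--Weiss.

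The first step is to invoke the John--Nirenberg inequality valid in any space of homogeneous type. It gives constants $C_1', C_2'$, depending only on the doubling and quasi-triangle constants and hence only on $n$, such that for every Korányi ball $B$ and every $\lambda>0$,
$$|\{z\in B: |b(z)-\langle b\rangle_B|>\lambda\}|\le C_1' e^{-C_2'\lambda/\|b\|_{\BMO(d_K)}}|B|.$$
I will regard this as standard, via a Calderón--Zygmund decomposition adapted to a Christ-type dyadic system.

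The second step transfers the inequality to a dyadic tent $\wK$, $K\in\mathcal{D}$. Proposition \ref{DyadicProperties} gives a point $z'$ with $T_{z'}\supset\wK$ and $|T_{z'}|\sim|\wK|$. Lemma \ref{Kor'{a}nyiCarleson} then gives a Korányi ball $B\supset T_{z'}$ with $|B|\sim|T_{z'}|$. Hence $\wK\subset B$ and $|B|\le C|\wK|$. For averages, we have
$$|\langle b\rangle_{\wK}-\langle b\rangle_B|\le \frac{1}{|\wK|}\int_B|b-\langle b\rangle_B|\,dV\le C\|b\|_{\BMO(d_K)}=:M.$$
So for $\lambda>2M$,
$$\{z\in\wK:|b-\langle b\rangle_{\wK}|>\lambda\}\subset\{z\in B:|b-\langle b\rangle_B|>\lambda/2\},$$
and the measure of this set is at most $C_1'e^{-C_2'\lambda/(2\|b\|)}\,C|\wK|$. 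For $\lambda\le 2M$ the trivial bound $|\wK|$ is enough once $C_1\ge e^{2C_2 C}$, since $e^{-C_2\lambda/\|b\|}\ge e^{-2C_2C}$ in that range. Adjusting constants and using $\|b\|_{\BMO(d_K)}\lesssim\|b\|_{\mathcal{B}}$ yields the lemma.

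The main obstacle is justifying the BMO bound on all Korányi balls, not only on those touching the boundary, which are the ones equivalent to tents. For interior balls, where $r<1-|z|$ up to constants, I plan to use Lemma \ref{Kor'{a}nyiContainment}, which places the ball inside a polydisk and hence inside a Bergman ball of bounded radius. There the oscillation is bounded by $\|b\|_{\BO}\lesssim\|b\|_{\mathcal{B}}$, because a Bloch function is Lipschitz in the Bergman metric. Balls centered near $0$ are handled by the same argument. Alternatively, one could bypass general balls entirely by running the Calderón--Zygmund stopping-time argument directly on the nested dyadic tents in each $\mathcal{D}_\ell$. This works because children of $\wK$ have comparable measure by Proposition \ref{DyadicProperties}(3),(4), which is the only property the classical proof needs. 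I would in fact take this as the primary route if the homogeneous-type citation is deemed insufficient.
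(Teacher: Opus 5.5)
Your main route is essentially the paper's argument. The paper gets the BMO condition on all Kor\'{a}nyi balls from the proof of Proposition \ref{prop:oscspaces}: tents handle balls reaching the boundary, and Lemma \ref{Kor'{a}nyiContainment} with Proposition \ref{ellipsoidspolydisks} and $\mathcal{B}\subset\BO$ handles small interior balls. It then applies John--Nirenberg for spaces of homogeneous type and passes to dyadic tents via Lemma \ref{Kor'{a}nyiCarleson}. Your comparison of $\langle b\rangle_{\wK}$ with $\langle b\rangle_B$ and your treatment of $\lambda\le 2M$ correctly fill in details the paper leaves implicit.

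Your fallback, however, would fail as stated. The classical stopping-time proof needs more than comparable measures of children. It also needs the dyadic cubes containing a point to shrink to it, each cube being the union of its children, so that Lebesgue differentiation gives pointwise control off the stopping cubes. Here $\wK=K\sqcup\bigcup_{J\text{ child of }K}\widehat{J}$, and the tents in $\mathcal{D}_\ell$ containing $z$ are never smaller than $\widehat{K(z)}$, where $K(z)$ is the kube containing $z$. So off the stopping tents you only know $\langle |b-\langle b\rangle_{\wK}|\rangle_{\widehat{K(z)}}\le\alpha$, not $|b(z)-\langle b\rangle_{\wK}|\lesssim\alpha$.

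This is not a technicality: tent-$\BMO$ alone does not imply the lemma. Take a sum of bumps supported on a separated lattice of Bergman balls of fixed radius, each with bounded normalized $L^1$ norm but distribution function of size $\lambda^{-2}$. It belongs to $\BMO$ yet violates any exponential bound.

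To repair the fallback, use at the stopping step the local estimate $|b(z)-\langle b\rangle_{K(z)}|\lesssim\|b\|_{\mathcal{B}}$. This holds because kubes have bounded hyperbolic diameter and Bloch functions are Lipschitz in $d_\beta$. Combine it with $|\langle b\rangle_{K(z)}-\langle b\rangle_{\widehat{K(z)}}|\lesssim\|b\|_{\BMO}$, which follows from $|K(z)|\sim|\widehat{K(z)}|$. This is exactly the interior information your main route gets from the small Kor\'{a}nyi balls.
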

\noindent Such facts are all likely known to the experts (see, for example, \cite{Limani2023} for various related results in the unit disk setting), but we include them in the paper for completeness.
 \begin{remark}
One can show that one can replace the hypothesis $b \in \mathcal{B}$ in Lemma \ref{lem:JohnNirenberg} by the weaker condition that $b$ is harmonic on $\B$ and belongs to $\BMO$. Indeed, if $b$ satisfies these conditions, it is straightforward to check that $b \in \BO$ (for example, using the fact that the Berezin transform preserves harmonic functions and the descriptions of BO in \cite{BBCZ}), and then we can apply the argument in Proposition \ref{prop:oscspaces}.

 \end{remark}

 \subsection{Orlicz Spaces}

 If $\Psi:[0, \infty) $ is a Young's function (convex, increasing, $\Psi(0)=0$, and $\Psi(\infty)=\infty$), 
 let $L_{\Psi}(\B)$ denote the space of Lebesgue measurable functions on $\B$ such that there exists $c>0$ with 

 $$\int_{\B} \Psi\left(\frac{|f|}{c}\right) \, dV < \infty.$$ If $f \in L_{\Psi}(\B)$, we define its Luxembourg norm

 $$\|f\|_{L_{\Psi}(\B)}:= \inf \left \{ c>0:   \int_{\B} \Psi\left(\frac{|f|}{c}\right) \, dV\leq 1 \right \} $$
and we define the localized Luxembourg norm average of a function $f$ on a Carleson tent $\widehat{K}$:
 $$\langle |f| \rangle_{\Psi, \wK}:= \inf \left \{ c>0:  \frac{1}{|\wK|} \int_{\wK} \Psi\left(\frac{|f|}{c}\right) \, dV\leq 1 \right \}.$$

\begin{proposition} \label{prop: Orlicz Inequalities}
Let $0< \varepsilon \leq 1$, $\Psi_\varepsilon(t)= t (\log(e+t))^\varepsilon$, and $\Phi_\varepsilon(t)= e^{t^{1/ \varepsilon}}-1.$
The following estimate holds uniformly over $K \in \mathcal{D}$ and $f \in L_\Psi(\wK), g \in L_{\Phi}(\wK) $:

 \begin{equation} \langle |f g| \rangle_{\wK} \lesssim \langle f \rangle_{\Psi_\varepsilon,\wK}  \langle g \rangle_{\Phi_\varepsilon,\wK}. \label{GeneralizedHolder} \end{equation}

We also have the inequality:

 \begin{equation} x(1+\log^+ x)\leq 2x\log(e+x)\leq 2Cx(1+\log^+ x), \quad x \in [0,\infty)   \label{YoungCompar}\end{equation}
where here $ \log^{+} t:= \max \{0, \log t\}.$ Therefore, the Young's function $\Psi_\varepsilon(t)=t(\log(e+t))^\varepsilon$ could be replaced by $\psi_\varepsilon(t):= t (1+ \log^{+}(t))^{\varepsilon}$, and the resulting Orlicz spaces will be the same with equivalent norms.

Finally, the following sub-multiplicative property of $\Psi_\varepsilon$ holds:

\begin{equation} \label{SubMultiply}
\Psi_\varepsilon(xy) \leq 2 \Psi_\varepsilon(x) \Psi_\varepsilon(y), \quad x,y \in [0,\infty).
\end{equation}

\begin{proof}
The H\"{o}lder-type inequality \eqref{GeneralizedHolder} follows from the generalized H\"{o}lder's inequality for Orlicz spaces and direct estimates on the complementary Young's function for $\Psi_\varepsilon.$ The other inequalities follow from direct computation.

\end{proof}

\end{proposition}

Given any Young's function $\Psi$, define the Young's adapted dyadic maximal function

 $$ M_{\Psi}f(z):= \sup_{K \in \mathcal{D}} \langle |f| \rangle_{\Psi, \wK} \mathbf{1}_{\wK}(z).$$ 

 \begin{lemma} \label{YoungMaxDist}
 The Young's maximal function $M_{\Psi}$ satisfies the distributional inequality 

 $$\left| \left \{z \in \B: M_{\Psi} f(z)>\Lambda   \right \} \right| \leq N  \int_{\B} \Psi\left(\frac{|f(z)|}{\Lambda}\right) \, dV(z), \quad \Lambda>0.$$

 \begin{proof}
 Without loss of generality, we can replace $M_{\Psi}$ by $M_{\Psi}^{\mathcal{D}_\ell}$ for $\ell=1,2,\cdots, N$ (note the constant $N$). Fix $\Lambda>0$. Let $\{\wK_j\}$ denote the pairwise disjoint collection of maximal dyadic intervals so that 

 $$ \|f \|_{\Psi,\wK_j} > \Lambda.$$ Then we have, by definition of the Luxembourg norm:

 \begin{align*}
 \left| \left \{z \in \B: M_{\Psi} f(z)>\Lambda   \right \} \right| & = \sum_{j} |\wK_j| \leq  \sum_{j} \int_{\wK_j} \Psi\left(\frac{|f(z)|}{\Lambda}\right) \, dV(z) \\
 & \leq \int_{\B} \Psi\left(\frac{|f(z)|}{\Lambda}\right) \, dV(z).
 \end{align*}    
 \end{proof}
 \end{lemma}

 \section{The failure of weak-type (1,1) estimates} \label{WeakTypeFails}
 \begin{proof} [Proof of Theorem \ref{WeakTypeCounterexample}]
Inspired by the construction in \cite{Perez}, we give an example here to show that $b\in\text{BMO}$ does not imply that $[b,P]$ is of weak-type $(1,1)$. In fact, the specific example given will show that $[b,P]$ does not even satisfy a restricted weak-type estimate.  Let $b(z)=\overline{{\text{Log}}} (1-z_1)$ where $$\text{Log} (1-z_1)=\sum_{j=1}^\infty\frac{z_1^j}{j}.$$ Then $b(z)$ is anti-holomorphic on $\B$. Since for $z=(z_1,\dots,z_n)\in\B$,
\[|(\text{Log})^\prime(1-z_1)|(1-|z_1|^2)=\frac{1-|z_1|^2}{|1-z_1|}\leq 2,\]
we have $\text{Log}(1-z_1)\in \Bl=\text{BMO}\cap\text{Hol}(\B)$ which implies $b\in\text{BMO}$. Let $B(z,r)$ denote the Euclidean ball centered at $z$ of radius $r$. For $z_0=(s,0,\dots,0)$ with $s\in (0,1)$, we have $B(z_0,1-s)\subseteq \B$. Therefore, $b(z)$ is anti-holomorphic on $B(z_0,1-s)$. We set $f(z):=|B(z_0,1-s)|^{-1}1_{B(z_0,1-s)}(z)$ and compute $[b,P](f)(z)$ for $z\in U$:
\begin{align*}
   &[b,P](f)(z)\\=&b(z)P(f)(z)-P(bf)(z)\\=& \frac{\overline{\text{Log}}(1-z_1)}{|B(z_0,1-s)|}\int_{B(z_0,1-s)}\frac{n!dV(w)}{\pi^n(1-\langle z,w\rangle)^{n+1}}-\int_{B(z_0,1-s)}\frac{n!\overline{\text{Log}}(1-w_1)dV(w)}{|B(z_0,1-s)|\pi^{n}(1-\langle z,w\rangle)^{n+1}}.
\end{align*}
By the mean value property of anti-holomorphic functions,
\begin{align*}
    &\frac{\overline{\text{Log}}(1-z_1)}{|B(z_0,1-s)|}\int_{B(z_0,1-s)}\frac{n!dV(w)}{\pi^n(1-\langle z,w\rangle)^{n+1}}=\frac{\overline{\text{Log}}(1-z_1)}{|B(z_0,1-s)|}\frac{n!|B(z_0,1-s)|}{\pi^n(1-z_1s)^{n+1}}=\frac{n!\overline{\text{Log}}(1-z_1)}{\pi^n(1-z_1s)^{n+1}};\\
    &\int_{B(z_0,1-s)}\frac{n!\overline{\text{Log}}(1-w_1)dV(w)}{|B(z_0,1-s)|\pi^{n}(1-\langle z,w\rangle)^{n+1}}=\frac{|B(z_0,1-s)|\overline{\text{Log}}(1-s)}{|B(z_0,1-s)|\pi^n(1-z_1s)^{n+1}}=\frac{n!\overline{\text{Log}}(1-s)}{\pi^n(1-z_1s)^{n+1}}.
\end{align*}
Therefore, $[b,P](f)(z)=\frac{n!\overline{\text{Log}}(\frac{1-z_1}{1-s})}{\pi^n(1-z_1s)^{n+1}}$. Set $1-s=2^{-k}$ and for $0<m<k$, set $$U_{m}=\{z\in \B: 2^{m-k-1}\leq |1-z_1|\leq 2^{m-k}\}.$$
It is not hard to see that \begin{align*}
&U_m\subseteq \{z\in \mathbb C^n: |1-z_1|\leq 2^{m-k},\sum_{j=2}^n|z_j|^2<2^{m-k+1}\};\\&U_m\supseteq \{z\in \mathbb C^n: 2^{m-k-1}\leq|1-z_1|\leq 2(1-|z|)\leq 2^{m-k},\sum_{j=2}^n|z_j|^2<2^{m-k-1}\}.\end{align*} Therefore, $|U_m|\approx 2^{(n+1)(m-k)}$. 

For $z=(z_1,\dots,z_n)\in U_m$, 
\[|1-z_1s|=s|s^{-1}-z_1|.\]
Note that $s\approx 1$ and 
\[2^{m-k}\approx |1-z_1|-s^{-1}(1-s)\leq |s^{-1}-z_1|\leq |1-z_1|+s^{-1}(1-s)\approx 2^{m-k}.\]
Hence, there exists a constant $C$ such that 
\[\frac{1}{|1-z_1s|}\geq C2^{k-m}.\]
Furthermore, 
\[\left|\overline{\text{Log}}\left(\frac{1-z_1}{1-s}\right)\right|\geq \left|\ln\left|\frac{1-z_1}{1-s}\right|\right|\geq m\ln2.\]
Thus by setting $\lambda=\pi^{-n}n!C^{n+1}2^{{(n+1)}(k-m)}m\ln2$, we have 
\[U_{m}\subseteq \{z\in\B: |[b,P](f)(z)|>\lambda\},\]
and 
\begin{align*}
    &\lambda\cdot|\{z\in\B: |[b,P](f)(z)|>\lambda\}|\\\geq& \lambda\cdot |U_m|\\\gtrsim&2^{(n+1)(k-m)}m\cdot 2^{(n+1)(m-k)}\\=&m.
\end{align*}
Setting $m=\frac{k}{2}$ and letting $k\to \infty$ (i.e. $s\to 1^-$) then yield
\begin{align*}
    \lambda\cdot|\{z\in\B: |[b,P](f)(z)|>\lambda\}|\gtrsim k/2\to \infty,
\end{align*}
proving that $[b,P]$ is not of weak-type $(1,1)$.
\end{proof}

\begin{proof}[Proof of Theorem \ref{StronglyConvex}]
Let $q_0$ be a strongly convex smooth boundary point of $\Omega$. Set $q_0=0$ and let the outward normal direction to the boundary $\mathbf {b}\Omega$ at $p_0$ to be the positive $\text{Re} (z_1)$-axis direction. 
Then $\text{Re} (z_1)<0$ for all $z\in \Omega$ and hence $\text{Log}(-z_1)\in \mathrm{Hol}(\Omega)$. We set $b(z)=\overline{\text{Log}(-z_1)}$. For sufficiently small $s=2^{-k}>0$, we set $z_0=(-s,0,\dots,0)$, and let $B_{z_0}\subseteq \Omega$ be a ball centered at $z_0$. Let $f(z)=|B_{z_0}|^{-1}1_{B_{z_0}}(z)$. Then the mean value property yields that 
\[[b,P](f)(z)=\overline{\text{Log}}\left({-z_1}/{s}\right)K_{\Omega}(z,z_0).\]
Set $U_m=\{z\in\Omega: 2^{m-k-1}\leq |z_1|\leq 2^{m-k}\}$ and let $q_m=(-\frac{3}{2}2^{m-k-1},0,\dots,0)$ be the center of $U_m$. Then for $z\in U_m$,
\[|\text{Log}\left({-z_1}/{s}\right)|\geq m\ln 2.\] 
Recall a result of Engli\v{s} \cite[Theorem 0.4]{Englis}: There exists a neighborhood of $q_0$ in $\mathbb C^n$ and functions $\gamma,\phi,\psi\in C^\infty(U\times U)$ such that on $(U\cap\Omega)\times (U\cap\Omega)$ the Bergman kernel $K_{\Omega}$ has an asymptotic expansion:
\begin{equation}\label{7}K_{\Omega}(x,y)=\frac{\phi(x,y)}{\gamma(x,y)^{n+1}}+\psi(x,y)\log \gamma(x,y).\end{equation}
Here $\gamma,\phi$ and $\psi$ satisfies the following conditions:
\begin{enumerate}
    \item $\bar \partial_x\gamma(x,y)$, $\partial_y\gamma(x,y)$ and all their partial derivatives vanish when $x=y$, and similarly for $\phi$ and $\psi$.
    \item $\gamma(x,x),\phi(x,x),\psi(x,x)$ are real-valued and $-\gamma(x,x)$ is a local defining function of $\Omega$. 
    \item $\phi(x,x)>0$ for $x\in U\cap \Omega$.
\end{enumerate}
By choosing $m=k/2$ (i.e. $k-m=k/2$) and letting $k$ to be sufficiently large, $q_m$, $z_0$ and $U_m$ will be close to the strongly (pseudo)convex point $q_0$. We may assume $q_m$, $z_0$ and $U_m$ are contained in $U$. Then from \eqref{7} there exists a polydisk $D\subseteq U_m$ centered at $q_m$ and a constant $C>0$ such that for $z\in D$,
\[|K_{\Omega}(z,z_0)|\geq C |D|^{-1}.\]
Now set $\lambda=\frac{kC\ln 2}{2|D|}$, then 
\begin{align*}
    \lambda\cdot|\{z\in \omega: |[b,P](f)(z)|>\lambda\}|\geq \lambda\cdot |D|=\frac{kC\ln 2}{2}.
\end{align*} Letting $k\to \infty$ (i.e. $s\to 0^+$)
yields Theorem \ref{StronglyConvex}. 
\end{proof}

Next, we prove Theorem \ref{HInfinity}, which shows the only way a weak-type estimate can hold is if $b$ is a bounded function. 

\begin{proof}[Proof of Theorem \ref{HInfinity}]
If $b \in H^\infty(\B)$, it is trivial to see $[\bar b, P]$ is weak-type $(1,1)$ (see Proposition \ref{LInfinityTrivial}). We therefore focus on the converse direction. Let $b \in \text{Hol}(\B)$ and suppose $[\bar b, P]$ is weak-type $(1,1)$. Suppose by way of contradiction that $b$ is unbounded. Since $b$ is holomorphic, there exists a sequence $\{z_k\}$ in $\B$ with limit point on the boundary and $\lim_{k \rightarrow \infty} |b(z_k)|= \infty.$

Consider the sequence of functions $f_k= |B(z_k, r_k)|^{-1} \mathbf{1}_{B(z_k, r_k)} $, where $r_k= \frac{1-|z_k|}{2}.$ Note $\|f_k\|_{L^1(\B)}=1$ for all $k$. A straightforward computation, similar to the proof of Theorem \ref{WeakTypeCounterexample}, shows that
\begin{equation}
[\bar b, P](f_k)(z)= \frac{n! \left(\bar{b}(z)-\bar{b}(z_k)\right)}{\pi^n(1-\langle z, z_k \rangle)^{n+1}}, \quad z \in \B.
\end{equation}

Next, set
$$ \lambda_k= \frac{n!}{\pi^n 2^{n+2}} \inf_{z \in B(0, \frac{1}{2})}  |b(z)-b(z_k)|.$$ Without loss of generality, we may assume $\lambda_k>0$ for all $k$, and by hypothesis $\lim_{k \rightarrow \infty} \lambda_k=\infty.$
Note that if $z \in B(0, \frac{1}{2})$, we have the bound
\begin{equation*}
|[\bar b, P](f_k)(z)|    \geq \frac{n!}{\pi^n 2^{n+1}} |b(z)-b(z_k)|  > \lambda_k.
\end{equation*}
In other words, 

\begin{equation}
B\left(0,\frac{1}{2}\right) \subseteq \{ z \in \B: |[\bar b, P](f_k)(z)|> \lambda_k\}.
\end{equation}
The hypothesized weak-type estimate then implies 
 \begin{equation}
\left|B\left(0,\frac{1}{2}\right) \right | \leq \left| \{ z \in \B: |[\bar b, P](f_k)(z)|> \lambda_k\} \right| \leq \frac{C}{\lambda_k}.
\end{equation} 
Letting $k \rightarrow 0$, the right side tends to $0$ while the left hand side is bounded below by a positive dimensional constant, giving the desired contradiction.
\end{proof}

\section{Necessary conditions for the boundedness} \label{NecessitySection}

We now show that the Bloch condition is a necessary condition for a P\'{e}rez-type distributional estimate for commutators with anti-holomorphic symbols.

\begin{theorem} \label{thm:suff}
Suppose that $b \in \mathrm{Hol}(\B)$ and that there exists $C>0$ so that the following holds for all bounded, compactly supported functions $f$ on $\B$ and $\lambda>0$:
$$ \left| \left \{ z \in \B: |[\bar{b},P]f(z)|>\lambda \right \}  \right | \leq C   \int_{\B} \frac{|f(z)|}{\lambda} \left( 1+ \log^{+}\left( \frac{|f(z)|}{\lambda}\right) \right) \, dV(z).$$
Then $b \in \mathcal{B}$. 


\begin {proof}
Since $b \in \text{Hol}(\B)$, by Proposition \ref{prop:BlochBMO} it is enough to show $b \in \mathrm{BMO}.$ Fix a point $z_0 \in \B$ and $r>0$; all estimates we obtain will be uniform in $z_0$, and it is enough to prove $b \in \textrm{BO}_r$ since these spaces are independent of $r$. We will test on the characteristic function $f(w):= \mathbf{1}_{E(z_0,\hat{r})}(w)$, where we recall from Proposition \ref{ellipsoidspolydisks} that $E(z_0,\hat{r})$ with $\hat{r}=2R \sigma$ denotes the polydisk comparable to  $D(z_0,r)$ hyperbolic (Bergman) ball of radius $r$ centered at $z_0$. First, let us compute

\begin{align*}
P(\mathbf{1}_{E(z_0,\hat{r})})(w) & = c_n \int_{E(z_0,\hat{r})} \frac{1}{(1-\langle w, \zeta \rangle )^{n+1}}\, dV(\zeta) = \frac{c_n\,|E(z_0,\hat{r})|}{(1-\langle w, z_0 \rangle )^{n+1}}
\end{align*}
by the mean-value property for anti-holomorphic functions on polydisks (recall anti-holomorphic functions are pluriharmonic).
 In an entirely similar way,

\begin{align*}
P( \overline{b} \mathbf{1}_{E(z_0,\hat{r})})(w)
& = c_n \int_{E(z_0,\hat{r})} \frac{\bar{b}(\zeta)}{(1-\langle w, \zeta \rangle )^{n+1}}\, dV(\zeta)= \frac{c_n\,|E(z_0,\hat{r})| \bar{b}(z_0)}{(1-\langle w, z_0 \rangle )^{n+1}}.
\end{align*}
Therefore, if we suppose $w \in E(z_0,\hat{r}),$ we actually have the pointwise equivalence, using the standard Bergman metric estimate $|1-\langle w, z_0 \rangle|^{n+1} \sim  (1-|z_0|^2)^{n+1} \sim |E(z_0,\hat{r})|$ for such $w$:

$$ |[\bar{b},P](\mathbf{1}_{E(z_0,\hat{r})})(w)| \sim |b(w)-b(z_0)| ,$$
where the implicit constants depend on $r$ and the ambient dimension $n$ but not $w, z_0,$ or $b$. 

Next, we estimate the $\mathrm{BMO}$-like quantity $\int_{E(z_0,\hat{r})} |b-b(z_0)|^{1/2} \, dV.$ We have

\begin{align*}
& \int_{E(z_0,\hat{r})} |b(w)-b(z_0)|^{1/2} \, dV(w)  = \frac{1}{2} \int_{0}^{\infty} \lambda^{-1/2} |\{w \in E(z_0,\hat{r}):|b(w)-b(z_0))|>\lambda \}| \, d \lambda \\
& \leq \frac{|E(z_0,\hat{r})|}{2} \int_{0}^{1} \lambda^{-1/2} \, d \lambda + \frac{1}{2}  \int_{1}^{\infty} \lambda^{-1/2} \left |\left \{w \in E(z_0,\hat{r}):|[\overline{b},P](\mathbf{1}_{E(z_0,\hat{r})})(w)|>c \lambda \right \} \right | \, d \lambda.
\end{align*}

The first term in the last display is equal to $|E(z_0,\hat{r})|$, while the second term can be estimated by using the hypothesized distributional inequality: 

\begin{align*}
& \frac{1}{2}  \int_{1}^{\infty} \lambda^{-1/2} \left |\left \{w \in E(z_0,\hat{r}):|[\overline{b},P](\mathbf{1}_{E(z_0,\hat{r})})(w)|>c \lambda \right \} \right | \, d \lambda \\
& \lesssim C  \int_{1}^{\infty} \lambda^{-3/2} \int_{E(z_0,\hat{r})} \log \left(e + \frac{1}{c \lambda} \,  \right) \, dV  \, d \lambda \\
& \lesssim |E(z_0,\hat{r})|.
\end{align*}

Since none of these estimates depend on the point $z_0$, and $E(z_0,\hat{r}) \supset D(z_0,r)$ with $|E(z_0,\hat{r})| \sim |D(z_0,r)|$ with constant independent of $z_0$, altogether we have proven

\begin{equation} \sup_{z_0 \in \B} \frac{1}{|D(z_0,r)|} \int_{D(z_0,r)} |b-b(z_0)|^{1/2} \, dV \lesssim C,. \label{BMO1/2} \end{equation}
where $C$ denotes the constant in the right-hand side of the hypothesized distributional estimate.

By \cite[Lemma 2.24]{ZhuBallBook}, for any $w \in D(z_0,\frac{r}{3})$ there holds, using the subharmonicity of $|b(\cdot)-b(z_0)|^{1/2}$ on  $D(w,\frac{r}{3}) \subset D(z_0,r)$:

\begin{equation}
 |b(w)-b(z_0)| \lesssim \left( \frac{1}{|D(w,\frac{r}{3})|}\int_{D(w,\frac{r}{3})} |b(\zeta)-b(z_0)|^{1/2} \, dV \right)^2 \lesssim C^2.
 \end{equation}







This estimate is in fact enough to conclude $b \in \mathrm{BO}$, because we have shown

$$ \sup_{z_0 \in \B} \sup_{w \in D(z_0, \frac{r}{3})} |b(w)-b(z_0)| \lesssim C^2.$$
Therefore, $b$ has bounded oscillation on Bergman balls (precisely, it belongs to $\textrm{BO}_{\frac{r}{3}}$ ) and thus belongs to $\mathrm{BMO}.$

\end{proof}
    
\end{theorem}

\section{Sufficient conditions for boundedness} \label{SuffSection}

The following dyadic bound and splitting for the operator $[\bar{b},P]$ will be used in the remainder of this section. We have
$$ [\bar{b},P](f)(z)= P\bigg(\bar{b}(z) f(\cdot)- \bar{b}(\cdot)f(\cdot)\bigg)(z), \quad z \in \B.$$

We thus can estimate using the positive Bergman operator $P^+$ and Lemma \ref{lem:dyadicdomination}:

\begin{align*}
|[\bar{b},P](f)(z)| & \lesssim P^{+}\bigg(|b(z)-b(\cdot)||f(\cdot)|\bigg)(z)  \\
& \lesssim \sum_{K \in \mathcal{D}} \bigg \langle |b(z)-b(\cdot)| |f(\cdot)| \bigg \rangle_{\wK} \mathbf{1}_{\wK}(z) \\
& \leq \sum_{K \in \mathcal{D}}|b(z)-\langle b \rangle_{\wK}|   \langle |f| \rangle_{\wK} \mathbf{1}_{\wK}(z) + \sum_{K\in \mathcal{D}} \bigg \langle |b-\langle b \rangle_{\wK}| |f| \bigg \rangle_{\wK} \mathbf{1}_{\wK}(z)
:= \mathcal{T}_b f(z) + \mathcal{T}_b^* f(z).
\end{align*}

The following technical lemmas will be needed to prove one of the main theorems, and in particular control $\mathcal{T}_b$ and $\mathcal{T}_b^*$ separately. This line of argument is heavily inspired by ideas in \cite{LMRR2017} for commutators with real-variable Calder\'{o}n-Zygmund operators. For ease of notation in what follows, we let $\Psi(t):= \Psi_1(t)= t (\log (e+t))$.

 \begin{lemma} \label{EstimateTbStar}
Fix a dyadic grid $\mathcal{D}_{\ell}$, $\ell \in \{1,\cdots, N\}$, let $k \in \mathbb{Z}$, and suppose $f$ is bounded and compactly supported. Consider the following collection of Carleson tents:

$$ \mathcal{F}_k= \{\wK: K \in \mathcal{D}_{\ell}: \,  4^{-k-1}< \|f\|_{L^{\Psi}(Q)}\leq 4^{-k} \} .$$

Then for any measurable set $E \subset \B$, there holds

$$ \int_{E} \left(\sum_{\wK \in \mathcal{F}_k} \mathbf{1}_{\wK} \right) \, dV \leq 2^k |E| + \frac{C}{2^{2^k}} \int_{\B} \Psi(4^k |f|) \, dV.$$

\begin{proof}

Consider the sub-collections $\mathcal{F}_{k,m}$ of Carleson tents, defined inductively for non-negative integers $m$:

$$ \mathcal{F}_{k,0}:= \{\wK: \wK \text{ is maximal in }\mathcal{F}_k\},$$

$$ \mathcal{F}_{k,m}:= \left \{\wK: \wK \text{ is maximal in }\mathcal{F}_k \setminus \left( \bigcup_{\ell=0}^{m-1} \mathcal{F}_{k,\ell} \right) \right\}, m \geq 1.$$
Therefore $\mathcal{F}_{k,0}$ consists of the maximal dyadic tents in $\mathcal{F}_k$, $\mathcal{F}_{k,1}$ consists of the maximal dyadic tents strictly contained in members of $\mathcal{F}_{k,0}$, etc.  

We can make this ``layer decomposition'' disjoint via 

$$ E_{K}:=  \widehat{K} \setminus \bigcup_{\substack{\widehat{J} \in \mathcal{F}_{k,m+1}: \\ \widehat{J} \subset \wK}} \widehat{J},     \quad   \wK \in \mathcal{F}_{k,m},$$
noticing that $E_K\cap E_R =\emptyset$ if $\wK, \widehat{R} \in \mathcal{F}_{k}$ and $K \neq R$.

We will also need the ``almost disjoint sets''

$$ \widetilde{E}_K:=  \wK \setminus \bigcup_{\substack{\widehat{J} \in \mathcal{F}_{k,m+5}: \\ \widehat{J} \subset \wK}} \widehat{J},     \quad   \wK \in \mathcal{F}_{k,m}.$$
Note that the sets $\widetilde{E}_K$ for  $\wK 
\in \mathcal{F}_k$ have finite overlap in the precise sense that for $z \in \B,$

$$ \sum_{\wK \in \mathcal{F}_k} \mathbf{1}_{\widetilde{E}_K} (z) \leq 4.  $$
Additionally define the set, for $m \geq 0$ and $\wK\in \mathcal{F}_{k,m}$ 

$$ S_k(K):= \bigcup_{\substack{\widehat{R} \in \mathcal{F}_{k,m+2^k}\\ \widehat{R} \subset \widehat{K}}} \widehat{R} $$
and notice that $S_k$ is in fact a disjoint union since the level in the decomposition is fixed. It is also obvious from volume considerations that 

$$ |S_k(K)| \leq C e^{-2^{k+1}(n+1)\theta_0}|\widehat{K}| \leq 2^{-2^k} |\widehat{K}|,$$
where in the last inequality we use the fact that the parameter $\delta_0$ was chosen so that $(n+1)\theta_0>1.$

Therefore,
\begin{equation}
\sum_{\wK\in \mathcal{F}_k} |E \cap S_k(K)| \leq \sum_{\wK \in \mathcal{F}_k} | S_k(K)| \leq 2^{-2^k} \sum_{\wK \in \mathcal{F}_k} | \wK| . \label{FarDownPart}
\end{equation}
Now by definition, if $\wK \in \mathcal{F}_k$, there holds, using the sub-multiplicative property of $\Psi$ in \eqref{SubMultiply}:

$$ |\wK| \leq \int_{\wK} \Psi( 4^{k+1}|f|) \, dV \leq 16 \int_{\wK} \Psi( 4^{k}|f|) \, dV.$$
But also, by the construction of the collections $\mathcal{F}_{k,m}$ and sets $\widetilde{E}_K$

\begin{align*}
\int_{\wK} \Psi( 4^{k}|f|) \, dV & = \int_{\widetilde{E}_{K}} \Psi( 4^{k}|f|) \, dV + \int_{\wK \setminus \widetilde{E}_{K}} \Psi( 4^{k}|f|) \, dV \\
& \leq \int_{\widetilde{E}_{K}} \Psi( 4^{k}|f|) \, dV  + \sum_{\substack{\widehat{R} \in \mathcal{F}_{k,m+5}: \\ \widehat{R} \subset \wK}} \int_{\widehat{R}} \Psi(4^k |f|)\, dV \\
& \leq \int_{\widetilde{E}_{K}} \Psi( 4^{k}|f|) \, dV  + \sum_{\substack{\widehat{R} \in \mathcal{F}_{k,m+5}: \\ \widehat{R} \subset \wK}} |\widehat{R}| \\
& \leq \int_{\widetilde{E}_{K}} \Psi( 4^{k}|f|) \, dV + \frac{|\wK|}{32}.
\end{align*}
Putting these estimates together yields

$$|\wK| \leq C  \int_{\widetilde{E}_{K}} \Psi( 4^{k}|f|) \, dV.$$

Substituting this estimate into \eqref{FarDownPart}, summing on $\wK \in \mathcal{F}_k$ and using the finite overlap of $\widetilde{E}_{K}$ then yields

$$ \sum_{\wK \in \mathcal{F}_k} |E \cap S_k(K)|  \leq \frac{C}{2^{2^k}} \int_{\B} \Psi(4^k |f|) \, dV.$$

It remains to estimate
$$\sum_{\wK \in \mathcal{F}_k} |E \cap (\wK \setminus S_k(K))| .$$  Note if $\wK \in \mathcal{F}_{k,m}$, 

$$\wK\setminus S_k(K)= \bigcup_{\ell=0}^{2^k-1} \bigcup_{\substack{\widehat{R} \subset \wK: \\ \widehat{R} \in \mathcal{F}_{k,m+\ell} }} E_{R}.$$

We have 

\begin{align*}
\sum_{\wK \in \mathcal{F}_k} 
 |E \cap (\wK \setminus S_k(K))| & \leq \sum_{m=0}^{\infty} \sum_{\wK \in \mathcal{F}_{k,m}} \sum_{\ell=0}^{2^k-1} \sum_{\substack{\widehat{R} \subset \wK: \\ \widehat{R} \in \mathcal{F}_{k,m+\ell} }} |E \cap E_{R}| \\
 & \leq 2^k \sum_{m=0}^{\infty} \sum_{\wK \in \mathcal{F}_{k,m}} |E \cap E_{K}| \\
 & \leq 2^k |E|,
\end{align*}

where we used the disjointness of the $E_{K}$ in the last step. 
    
\end{proof}

 \end{lemma}

 \begin{lemma} \label{EstimateTb}
Suppose that $b$ is harmonic on $\B$ and belongs to $\BMO$. Given a bounded, compactly supported function $f$, define the dyadic operator

$$ \mathcal{T}_b f(z):= \sum_{K \in \mathcal{D}}|b(z)-\langle b \rangle_{\wK}|   \langle |f| \rangle_{\wK} \mathbf{1}_{\wK}(z).$$

Then the following distributional inequality holds:

\begin{equation} \left| \left \{ z \in \B: |[\mathcal{T}_bf(z)|>\lambda \right \}  \right | \leq C \|b\|_{\rm{BMO}} \int_{\B} \frac{|f(z)|}{\lambda} \left( 1+ \log^{+}\left( \frac{|f(z)|\|b\|_{\BMO}}{\lambda}\right) \right) \, dV(z). \label{TbDist} \end{equation}

\begin{proof}
By scaling, assume $\|b\|_{\BMO}=1.$
By homogeneity and the estimate for the Orlicz maximal function in Lemma \ref{YoungMaxDist}, it is enough for us to prove 

$$ \left| \left \{ z \in \B: |\mathcal{T}_bf(z)|>2 \right \} \cap  \left \{ z \in \B: |M_{\Psi}f(z)| \leq 1/4 \right \}  \right | \leq \int_{\B} \Psi(|f(z)|) \, dV(z).$$
Consider the subcollections of kubes $\mathcal{F}_k$ constructed with respect to the function $f$ in Lemma \ref{EstimateTbStar}. Define also for $\wK \in \mathcal{F}_k$

$$O_{K}:= \{z \in \wK: |b(z)-\langle b \rangle_{\wK}|>(3/2)^k\}.$$
By Proposition \ref{lem:JohnNirenberg} (or the remark thereafter), we have the exponential decay
$$ |O_K| \leq C_1 \exp(-C_2 (3/2)^k) |\wK|  .$$

For $z \in E:= \{ \zeta \in \B: |M_{\Psi}f(\zeta)| \leq 1/4  \}$, we split  the operator $\mathcal{T}_b$ in the following way:

\begin{align*}
\mathcal{T}_b f(z) & = \sum_{K \in \mathcal{D}}|b(z)-\langle b \rangle_{\wK}|   \langle |f| \rangle_{\wK} \mathbf{1}_{\wK}(z) \\
& \leq \sum_{k=1}^\infty \sum_{\wK\in \mathcal{F}_k} (3/2)^k \langle |f| \rangle_{\wK} \mathbf{1}_{\wK}(z) + \sum_{k=1}^\infty \sum_{\wK \in \mathcal{F}_k} (|b(z)-\langle b \rangle_{\wK}| \langle |f| \rangle_{\wK} \mathbf{1}_{O_{K}}(z) \\
& := \mathcal{T}_b^1f(z)+ \mathcal{T}_b^2f(z).
\end{align*}

We then have, by set containment

\begin{align*}
 \left| \left \{ z \in \B: |\mathcal{T}_bf(z)|>2 \right \} \cap E \right| & \leq  \left| \left \{ z \in \B: |\mathcal{T}_b^1f(z)|>1 \right \} \cap E \right| + \left| \left \{ z \in \B: |\mathcal{T}_b^2f(z)|>1 \right \} \cap E \right|\\
 & := |E_1|+|E_2|.  
\end{align*}
We then estimate $|E_1|$ by applying Lemma \ref{EstimateTbStar}:

\begin{align*}
|E_1| & \leq \int_{E_1}|\mathcal{T}_b^1 f| \, dV \\
& \lesssim \sum_{k=1}^\infty (3/2)^k 4^{-k} \int_{E_1} \left(\sum_{\wK \in \mathcal{F}_k}  \mathbf{1}_{\wK}(z) \right) \, dV(z) \\
& \leq \left( \sum_{k=1}^{\infty} (3/4)^{k} \right) |E_1|+  \sum_{k=1}^\infty  \frac{C (3/2)^k}{2^{2^k}} \int_{\B} \Psi(4^k |f|) \, dV,
\end{align*}

which altogether implies

$$ |E_1| \lesssim \int_{\B} \Psi( |f|) \, dV .$$
To control $|E_2|$, recall the sets $\widetilde{E}_K$ constructed for dyadic Carleson tents $\wK$ in the proof of Lemma \ref{EstimateTb}. Recall the estimate 
$$|\wK| \leq C  \int_{\widetilde{E}_{K}} \Psi( 4^{k}|f|) \, dV.$$
Then

\begin{align*}
|E_2| & \leq \int_{E_2}|\mathcal{T}_b^2 f| \, dV \\
& \lesssim \sum_{k=1}^\infty  4^{-k} \sum_{\wK \in \mathcal{F}_k} \int_{O_{K}} |b(z)-\langle b \rangle_{\wK}| \, dV(z) \\
& \leq \sum_{k=1}^\infty  4^{-k} \sum_{\wK \in \mathcal{F}_k} \left(\frac{1}{|\wK|}\int_{\wK} |b(z)-\langle b \rangle_{\wK}|^2 \, dV(z)\right)^{1/2} |\wK|^{1/2} |O_{K}|^{1/2} \\
& \leq C \sum_{k=1}^{\infty} 4^{-k}  \exp(-C_2' (3/2)^k) \sum_{\wK\in \mathcal{F}_k } \int_{\widetilde{E}_{K}} \Psi( 4^{k}|f|) \, dV \\
& \leq C\sum_{k=1}^{\infty} 4^{-k}  \exp(-C_2' (3/2)^k)\int_{\B} \Psi( 4^{k}|f|) \, dV \\
& \leq C \sum_{k=1}^{\infty} k \exp(-C_2' (3/2)^k) \int_{\B} \Psi( |f|) \, dV \\
& \lesssim \int_{\B} \Psi( |f|) \, dV,
\end{align*}
which completes the proof.
    
\end{proof}
 \end{lemma}

We turn to the main point of this paper: proving a distributional inequality in the spirit of the work of C. P\'{e}rez in \cite{Perez}. 

\begin{theorem}  \label{BoundedSuffThm2} Suppose that $b$ is harmonic on $\B$ and belongs to $\BMO$. There exists $C>0$ so that the following holds for all bounded, compactly supported functions $f$ on $\B$ and $\lambda>0$:

$$ \left| \left \{ z \in \B: |[\bar{b},P]f(z)|>\lambda \right \}  \right | \leq C  \|b\|_{\rm{BMO}} \int_{\B} \frac{|f(z)|}{\lambda} \left( 1+ \log^{+}\left( \frac{|f(z)| \|b\|_{\BMO}}{\lambda}\right) \right) \, dV(z).$$

\begin{proof}

We will first use Lemma \ref{EstimateTbStar} to prove

\begin{equation} \left| \left \{ z \in \B: |[\mathcal{T}_b^*f(z)|>\lambda \right \}  \right | \leq C \|b\|_{\rm{BMO}} \int_{\B} \frac{|f(z)|}{\lambda} \left( 1+ \log^{+}\left( \frac{|f(z)| \|b\|_{\BMO}}{\lambda}\right) \right) \, dV(z). \label{TbStarDist} \end{equation}

The corresponding estimate for $\mathcal{T}_bf$ in place of $\mathcal{T}_b^*f$ follows from Lemma \ref{EstimateTb}. 

By homogeneity, it suffices to prove \eqref{TbStarDist} for $\lambda=1$. By the generalized H\"{o}lder inequality \eqref{GeneralizedHolder} and the John-Nirenberg theorem for Bloch functions (Proposition \ref{lem:JohnNirenberg}), we have the following pointwise estimate:

\begin{align*}
\mathcal{T}_b^* f(z) & = \sum_{K \in \mathcal{D}} \bigg \langle |b-\langle b \rangle_{\wK}| |f| \bigg \rangle_{\wK} \mathbf{1}_{\wK}(z) \\
& \lesssim \sum_{K \in \mathcal{D}} \|b-\langle b \rangle_{\wK} \|_{\Phi, \wK} \|f\|_{\Psi, \wK} \mathbf{1}_{\wK}(z) \\
& \lesssim \|b\|_{\BMO} \sum_{K \in \mathcal{D}} \|f\|_{\Psi, \wK} \mathbf{1}_{\wK}(z) := \|b\|_{\BMO} \,   \mathcal{A}_{\Psi}f(z).
\end{align*}

Given this pointwise estimate and Lemma \ref{YoungMaxDist}, we may further reduce to proving

$$\left| \left \{ z \in \B: |\mathcal{A}_{\Psi}f(z)|> 1 \right \} \cap \left \{ z \in \B: |M_{\Psi}f(z)| \leq 1/4 \right \} \right | \leq C  \|b\|_{\rm{BMO}} \int_{\B} \Psi(|f(z)|) \, dV(z).$$

Set $E= \left \{ z \in \B: |\mathcal{A}_{\Psi}f(z)|> 1 \right \} \cap  \left \{ z \in \B: |M_{\Psi}f(z)| \leq 1/4 \right \} .$ Using the definition of the level sets $\mathcal{F}_k$ introduced before, we can estimate, using Chebyshev's inequality:

\begin{align*}
& \left| \left \{ z \in \B: |\mathcal{A}_{\Psi}f(z)|> 1 \right \} \cap \left \{ z \in \B: |M_{\Psi}f(z)| \leq 1/4 \right \} \right |\\
& \leq \int_{E} |\mathcal{A}_{\Psi} f(z)| \, dV(z) 
\leq \sum_{k=1}^{\infty} 4^{-k} \int_{E} \left( \sum_{\wK \in \mathcal{F}_k} \mathbf{1}_{\wK}(z) \right) \, dV.
\end{align*}

Then we can apply Lemma \ref{EstimateTbStar} to estimate the last display:

\begin{align*}
& \sum_{k=1}^{\infty} 4^{-k} \int_{\B} \left( \sum_{\wK \in \mathcal{F}_k} \mathbf{1}_{\wK}(z) \right) \, dV \\
& \leq \sum_{k=1}^{\infty } 2^{-k} |E| +  C \sum_{k=1}^{\infty } \frac{k}{2^{2^k}} \int_{\B} \Psi(|f(z)|) \, dV(z).
\end{align*}

Summing on $k$, we altogether have

$$ |E| \leq \frac{1}{2} |E|+ C \int_{\B} \Psi(|f(z)|) \, dV(z) ,$$
which immediately gives the required estimate.

\end{proof}
\end{theorem}

The preceding theorems give us a new, endpoint operator-theoretic characterization of the Bloch space on the unit ball. If we examine the proofs more closely, we also get a simpler criterion which is equivalent in this case to the endpoint estimate (see Theorem \ref{BlochCharacterize}).








\begin{proof}[Proof of Theorem \ref{BlochCharacterize}]
The implication $(1) \implies (2)$ is given by Theorem \ref{BoundedSuffThm2}, while the implication $(2) \implies (3)$ easily follows by testing on characteristic functions for values of $\lambda>1$. Implication $(3) \implies (4) $ is trivial. To prove the implication $(4) \implies (1)$, notice that in the proof of Theorem \ref{thm:suff} (which formally proves $(2) \implies (1)$), we only need the distributional estimate for $\lambda>1$ and we also only need it for characteristic functions of polydisks $E(z,\hat{r})$.
    
\end{proof}



\subsection{A Slightly Different Estimate}

 In this section, we prove Theorem \ref{WeakTypeMod}, which aligns with the estimate for the Bergman projection on the bidisk which is proven in \cite{HuoWick2020}. Here, due to the proof techniques involved, we merely need to assume that the symbol function $b$ is \emph{harmonic}, not anti-holomorphic. In what follows, we write $\LlogL(\B)$ to mean $L_{\varphi}(\B)$, where $\varphi(t):= t \log^{+}(t).$ We first need the following oscillation lemma, which was first used in \cite{HuoLiWagner}. Below, $\widehat{K}_E$ denotes a fixed, small inflation of a dyadic Carleson tent $\widehat{K}$; explicitly one could write
$$ \widehat{K}_E:= \{z \in \B: d_\beta(z, \widehat{K})<\eta\}$$ for $\eta$ small. The exact value of the parameter $\eta$ is not important, the point is that we consider a small, hyperbolic neighborhood. We have the following measure estimate, the routine proof of which we leave to the interested reader.

\begin{proposition}\label{MeasureHyperbolicNeigh}
Let $K \in \mathcal{D}$ and let $r>0$ be given. For any $z \in K$, there holds
 $$|\widehat{K}_E| \sim |\widehat{K}| \sim_r |D(z,r)|$$ with implicit constants independent of the particular Carleson tent.
\end{proposition}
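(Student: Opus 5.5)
We must prove Proposition \ref{MeasureHyperbolicNeigh}: for $K\in\mathcal{D}$, $z\in K$, $|\widehat{K}_E|\sim|\widehat{K}|\sim_r|D(z,r)|$.

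The plan is to reduce everything to the single scale $(1-|c_K|)^{n+1}$, where $c_K$ is the center of $K$ and $K$ lies at generation $k$. First, $K$ lies in the shell $k\theta_0\le d_\beta(\cdot,0)<(k+1)\theta_0$. On this shell $1-|w|\sim e^{-2k\theta_0}$, because $d_\beta(w,0)=\tfrac12\log\frac{1+|w|}{1-|w|}$. With Proposition \ref{DyadicProperties}(3) this gives $|\widehat{K}|\sim|K|\sim e^{-2(n+1)\theta_0 k}\sim(1-|z|)^{n+1}$ for every $z\in K$. Second, for fixed $r$ the standard formula $|D(z,r)|\sim_r(1-|z|^2)^{n+1}$ holds. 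One can read it off from the explicit ellipsoid $D(z,r)$ with semi-axes $R\sigma$ (one complex direction) and $R\sqrt{\sigma}$ ($n-1$ complex directions), where $\sigma\sim_r 1-|z|^2$. The volume is then $\sim R^{2n}\sigma^{n+1}$. Alternatively, it follows from Proposition \ref{ellipsoidspolydisks} for small $r$, and from the automorphism-invariance computation in \cite{ZhuBallBook} in general. Together these give $|\widehat{K}|\sim_r|D(z,r)|$ with constants independent of $K$.

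It remains to show $|\widehat{K}_E|\sim|\widehat{K}|$. The lower bound is trivial since $\widehat{K}\subset\widehat{K}_E$. For the upper bound, we show that the $\eta$-hyperbolic neighborhood of $\widehat{K}$ is contained in a comparable Carleson tent. By Proposition \ref{DyadicProperties}, choose $z'$ with $\widehat{K}\subset T_{z'}$ and $|T_{z'}|\sim|\widehat{K}|$. Next, we claim that the $\eta$-neighborhood of a tent $T_{z'}$ lies in $T_{z''}$ for some $z''$ with $|T_{z''}|\lesssim_\eta|T_{z'}|$. Take $z''=tz'/|z'|$ with $1-t=A(1-|z'|)$, where $A$ is a large constant depending on $\eta$ (or take $z''=0$, giving $\B$, if $A(1-|z'|)\ge1$). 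The claim follows from the quasi-metric behavior of $|1-\langle w,\xi\rangle|$ together with the fact that points within hyperbolic distance $\eta$ of $w$ satisfy $|1-\langle u,\xi\rangle|\lesssim_\eta|1-\langle w,\xi\rangle|$ for unit $\xi$. Indeed, for $w\in T_{z'}$, $u\in D(w,\eta)$ and $\xi=z'/|z'|$, we have $|1-\langle u,\xi\rangle|\le C_\eta(1-|z'|)\le 1-t$ once $A\ge C_\eta$.

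To justify this comparison, apply Proposition \ref{ellipsoidspolydisks}. It puts $D(w,\eta)$ inside the polydisk $E(w,2R\sigma)$ with $\sigma\sim 1-|w|^2$. On that polydisk, $|\langle u-w,\xi\rangle|\lesssim(1-|w|)+(1-|w|)^{1/2}|Q_w\xi|$. Using $|Q_w\xi|^2\lesssim|1-\langle w,\xi\rangle|$ (since $|w|\ge1/2$ in the relevant case) and $1-|w|\le|1-\langle w,\xi\rangle|$, this is $\lesssim|1-\langle w,\xi\rangle|$. The volume bound $|T_{z''}|\sim(1-t)^{n+1}\sim_\eta(1-|z'|)^{n+1}\sim|T_{z'}|$ then finishes the argument.

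The main obstacle is exactly this last containment: one must handle the tangential directions of the polydisk correctly through $|Q_w\xi|^2\lesssim|1-\langle w,\xi\rangle|$. Small $|w|$ or tents near the origin need separate treatment, but there everything is comparable to $|\B|$ trivially.
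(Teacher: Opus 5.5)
Your proposal is correct. The paper gives no proof of this proposition to compare against: it calls the proof routine and leaves it to the reader.

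Your argument is complete. It reduces both $|\widehat{K}|$ and $|D(z,r)|$ to the common scale $(1-|z|)^{n+1}$, using the shell description of $K$ and the ellipsoid volume $R^{2n}\sigma^{n+1}$. It then places the $\eta$-neighborhood of $T_{z'}$ inside the enlarged tent $T_{z''}$. The key estimate is $|\langle u-w,\xi\rangle|\lesssim |1-\langle w,\xi\rangle|$, obtained by splitting $u-w=(P_wu-w)+Q_wu$ and writing $\langle Q_wu,\xi\rangle=\langle Q_wu,Q_w\xi\rangle$. This is exactly what is needed, and it checks out.

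Two small remarks.

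First, the case split you mention only in passing should be stated precisely. If $1-|z'|\le 1/2$, then every $w\in T_{z'}$ satisfies $1-|w|\le |1-\langle w,\xi\rangle|<1-|z'|\le 1/2$. So $|w|>1/2$, and the bound $|Q_w\xi|^2\le 4|1-\langle w,\xi\rangle|$ holds for all relevant $w$. If instead $1-|z'|>1/2$, then $|\widehat{K}|\sim|T_{z'}|\gtrsim 1$, so $|\widehat{K}_E|\le|\B|\lesssim|\widehat{K}|$ trivially.

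Second, your argument buys more than the proposition states. You show $\widehat{K}_E\subset T_{z''}$ with $|T_{z''}|\lesssim_\eta|\widehat{K}|$. Combine this with the first approximation property in Proposition \ref{DyadicProperties}, which gives $\widetilde{K}\in\mathcal{D}$ with $\widehat{\widetilde{K}}\supset T_{z''}$ and comparable measure. You then get $\widehat{K}_E\subset\widehat{\widetilde{K}}$ with $|\widehat{\widetilde{K}}|\sim|\widehat{K}|$. This is precisely the fact the paper asserts without proof (``it is clear'') at the start of the proof of Theorem \ref{WeakTypeMod}, so your argument justifies that step as well.
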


\begin{lemma} \label{OscLemma}
Let $\ell \in \{1,\cdots, N\}.$ There exists $C>0$ so that for any integrable and harmonic function $b$ on $\B$, the following estimate holds for all $K \in \mathcal{D}_{\ell}$:

\begin{equation}
|b(z)- \langle b \rangle_{\widehat{K}}| \mathbf{1}_{\widehat{K}}(z) \lesssim \sum_{\substack{J \in \mathcal{D}_{\ell} \\ \widehat{J} \subseteq \widehat{K} }} \langle |b-\langle b \rangle_{\widehat{J}}| \rangle_{\widehat{J}_E} \mathbf{1}_{\widehat{J}}(z).    
\end{equation}

\begin{proof}
For every $z \in \widehat{K}$, there exists a unique non-negative maximal integer $k$ and $\widehat{K}_j^k \in \mathcal{D}(\widehat{K})$ so that $z \in \widehat{K}_j^k$ (in fact, $z \in K_j^k$ by maximality). Set $j=j_0$ and let $j_1,\cdots, j_m$ be the indices so that we have the nested containments

$$ \widehat{K}_j^k \subset \widehat{K}_{j_1}^{k-1}\subset \cdots \subset \widehat{K}_{j_m}^{k-m}=: \wK. $$ For shorthand, we write
$$ J^q:= \widehat{K}_{j_q}^{k-q}$$
Since $b-\langle b \rangle_{\wK}$ is harmonic, $|b-\langle b\rangle_{\wK}| $ is subharmonic. Notice that if $z \in K_j^k$, we have, by the sub-mean value inequality for subharmonic functions on Bergman metric balls (see the proof in \cite[Lemma 2.24]{ZhuBallBook}), as well as Proposition \ref{MeasureHyperbolicNeigh}:

\begin{equation} \label{SubmeanMod}
    |b(z)- \langle b \rangle_{\widehat{K}}| \leq \frac{C}{|D(z,r)|}\int_{D(z,r) }   |b- \langle b \rangle_{\widehat{K}}| \, dV \lesssim  \bigg \langle |b- \langle b \rangle_{\widehat{K}}|  \bigg \rangle_{(\widehat{K}_j^k)_E}.
\end{equation}


We then can estimate, using \eqref{SubmeanMod} and repeated applications of the triangle inequality:

\begin{align*}
|b(z)- \langle b \rangle_{\widehat{K}}| & \lesssim \bigg \langle |b- \langle b \rangle_{\widehat{K}}|  \bigg \rangle_{J^0_E}  \leq \bigg \langle |b- \langle b \rangle_{J^0}|  \bigg \rangle_{J^0_E} + |\langle b \rangle_{\widehat{K}}- \langle b \rangle_{J^0}|\\ 
& \leq \bigg \langle |b- \langle b \rangle_{J^0}|  \bigg \rangle_{J^0_E} + \sum_{q=1}^m |\langle b \rangle_{J^{q-1}}- \langle b \rangle_{J^q}| \\
& \lesssim \bigg \langle |b- \langle b \rangle_{J^0}|  \bigg \rangle_{J^0_E} + \sum_{q=1}^m\langle |b- \langle b \rangle_{J^q}| \rangle_{J^q} \\
& \lesssim \sum_{\substack{J \in \mathcal{D}_{\ell} \\ \widehat{J} \subseteq \widehat{K} }} \langle |b-\langle b \rangle_{\widehat{J}}| \rangle_{\widehat{J}_E} \mathbf{1}_{\widehat{J}}(z).
\end{align*}

\end{proof}
    
\end{lemma}





\begin{proof}[Proof of Theorem \ref{WeakTypeMod}]

We estimate $\mathcal{T}_b^* f(z)$ first using Lemma \ref{OscLemma}. Note that given any dyadic kube $J \in \mathcal{D}$, it is clear we can find $\widetilde{J} \in \mathcal{D}$ (recall $\mathcal{D}$ is the union of many dyadic systems) so that the hyperbolic $\eta$-neighborhood $\widehat{J}_E$ is contained in $\widehat{\widetilde{J}}$ and has comparable measure. We use this fact to pull the BMO norm out of the sum:

\begin{align*}
\mathcal{T}_b^* f(z) & \lesssim \sum_{K \in \mathcal{D}} \sum_{J \in \mathcal{D}(K)} \langle |b- \langle b \rangle_{\widehat{J}}| \rangle_{\widehat{J}_E} \frac{\int_{\widehat{J}}|f| dV}{|\wK|} \mathbf{1}_{\wK}(z)\\
& \lesssim \|b\|_{\BMO} \sum_{K \in \mathcal{D}}  \bigg  \langle \sum_{J \in \mathcal{D}(K)} \langle |f| \rangle_{\widehat{J}} \mathbf{1}_{\widehat{J}} \bigg \rangle_{\widehat{K}} \mathbf{1}_{\widehat{K}}(z)
\end{align*}

Note that we have the dyadic maximal function estimate,

\begin{align*}
\int_{\wK} \left( \sum_{J \in \mathcal{D}(K)} \langle |f| \rangle_{\widehat{J}} \mathbf{1}_{\widehat{J}} \right) dV & = \sum_{J \in \mathcal{D}(K)} \langle |f| \rangle_{\widehat{J}}  |\widehat{J}| \\
& \lesssim \sum_{J \in \mathcal{D}(K)} \langle |f| \rangle_{\widehat{J}}  \, |J| \\
& \lesssim \int_{\wK} M_{\mathcal{D}}f dV.
\end{align*}

Putting this together, we have the following pointwise estimate on $\mathcal{T}_b^* f(z)$:

$$ \mathcal{T}_b^* f(z) \lesssim \|b\|_{\BMO} \, \mathcal{A}_{\mathcal{D}}(M_{\mathcal{D}}f)(z). $$

Since the dyadic operator $\mathcal{A}_{\mathcal{D}}$ is weak-type $(1,1)$, we get the following chain of estimates:

$$ \|\mathcal{A}_{\mathcal{D}}(M_{\mathcal{D}}f)\|_{L^{1, \infty}} \lesssim \|b\|_{\BMO} \|M_{\mathcal{D}} f\|_{L^1(\B)} \lesssim \|b\|_{\BMO} \|f\|_{\LlogL}.       $$

Note that we are using the fact that the dyadic maximal function $M_{\mathcal D}$ is bounded from $\LlogL$ to $L^1(\B)$. This can be deduced from the facts that $M_{\mathcal D}$ is weak-type $(1,1)$, bounded on $L^2(\B)$, and $|\B|< \infty$ (see \cite[Theorem 4.2]{StockdaleWagner}).

Next, we estimate $\mathcal{T}_b f(z)$, using Lemma \ref{OscLemma} again:

\begin{align*}
\mathcal{T}_b f(z) &  \lesssim  \sum_{K \in \mathcal{D}} \sum_{J \in \mathcal{D}(K)} \langle |f| \rangle_{\wK} \langle |b- \langle b \rangle_{\widehat{J}}| \rangle_{\widehat{J}_E} \mathbf{1}_{\widehat{J}}(z)\\
& \lesssim \|b\|_{\BMO} \sum_{K \in \mathcal{D}}  \langle |f| \rangle_{\wK} \sum_{J \in \mathcal{D}(K)} \mathbf{1}_{\widehat{J}}(z),
\end{align*}

and then 

\begin{align*}
\|\mathcal{T}_b f(z)\|_{L^{1,\infty}(\B)} \leq \|\mathcal{T}_b f(z)\|_{L^1(\B)} & \lesssim \|b\|_{\BMO}  \sum_{K \in \mathcal{D}}  \langle |f| \rangle_{\wK} \sum_{J \in \mathcal{D}(K)}|\widehat{J}| \\
& \lesssim \|b\|_{\BMO}  \sum_{K \in \mathcal{D}}  \langle |f| \rangle_{\wK} |K| \\
& \lesssim \|b\|_{\BMO} \int_{\B} M_{\mathcal{D}}f \, dV \\
& \lesssim \|b\|_{\BMO} \|f\|_{\LlogL}.
\end{align*}

\end{proof}

At least in the special case of holomorphic functions, the estimate in Theorem \ref{BoundedSuffThm2} (or equivalently Theorem \ref{BlochCharacterize}) is stronger than the estimate in Theorem \ref{WeakTypeMod}.


\begin{corollary}
Let $b \in \text{Hol}(\B)$. Suppose there exists $C_1>0$ so that the following holds for all bounded, compactly supported functions $f$ on $\B$ and $\lambda>0$:
\begin{equation} \left| \left \{ z \in \B: |[\bar{b},P]f(z)|>\lambda \right \}  \right | \leq C_1   \int_{\B} \frac{|f(z)|}{\lambda} \left( 1+ \log^{+}\left( \frac{|f(z)|}{\lambda}\right) \right) \, dV(z). \label{Estimate1}\end{equation} Then there exists $C_2>0$ so that for all $ f \in \LlogL$:
\begin{equation} \|[\bar{b},P]f\|_{L^{1,\infty}(\B)} \leq C_2  \|f\|_{\LlogL}, \quad f \in \LlogL. \label{Estimate2}\end{equation}

\begin{proof}
 Suppose estimate \ref{Estimate1} holds. Then Theorem \ref{BlochCharacterize} implies $b \in \mathcal{B}$, and Theorem \eqref{WeakTypeMod} implies estimate \eqref{Estimate2} holds.   
\end{proof}
    
\end{corollary}

 \subsection{Refined Estimates with Exponential BMO Spaces}
\begin{proof} [Proof of Theorem \ref{ExpOscThm}]
The proof proceeds almost identically to the proof of Theorem \ref{BoundedSuffThm2}. The only major difference is that the Young's function $\Psi$ needs to be replaced by $\Psi_\varepsilon$ throughout, and the generalized H\"{o}lder estimate used in Theorem \ref{thm:suff} needs to be applied to $\Psi_\varepsilon$ and $\Phi_\varepsilon$. We have control over $\|b\|_{\Phi_\varepsilon, \wK}$ in this case by the exponential oscillation condition.

\end{proof}

\section{Further Generalizations} \label{FurtherGen}
As we did with Theorem \ref{StronglyConvex} in Section \ref{WeakTypeFails}, we believe that the rest of these results can be generalized on strongly pseudoconvex domains with smooth boundary. Such domains have key structural properties, including dyadic decompositions induced by the horizontal metric on the boundary and asymptotics on the Bergman kernel. We have not checked all the details for such domains, but we fully expect such a generalization can be carried out. Another possible direction is to work out the endpoint theory for the commutator on the Bergman space of the polydisk, which we already know exhibits different endpoint behavior from the unit ball for the Bergman projection operator (recall \cite{HuoWick2020}).  

It would also be quite interesting to investigate analogs of these results on domains where the Bergman projection is bounded in a proper sub-range of $p$ values, such as the Hartogs triangle, worm domains, or tube domains. 
\section{Appendix}

In this section, we provide proofs for many of the geometric lemmas and propositions in Section \ref{Prelim}.

\begin{proof}[Proof of Proposition \ref{ellipsoidspolydisks}]
Take $w \in D(z,r)$. As before, write $w= \sum_{j=1}^n \xi_j e_j$ with respect to the orthonormal system constructed with $e_1=\frac{z}{|z|}.$  We estimate

\begin{align*}
|P_z w-z| & \leq |P_z w-c|+ z-c| \\
& \leq R \sigma + \left|z- \left( \frac{1-R^2}{1-R^2|z|^2}\right)z \right  | \\
& \leq R \sigma + |z| R^2 \left | \frac{1-|z|^2}{1-R^2|z|^2}\right | \\
& \leq R \sigma + R^2 \sigma \\
& \leq 2 R \sigma.
\end{align*}
On the other hand,
$$ |\xi_j| \leq |Q_z w| \leq R \sqrt{\sigma} \leq  \sqrt{2 R \sigma},$$
completing the proof of the containment $D(z,r) \subset E(z,2 R \sigma).$

On the other hand, suppose $w \in E(z, \frac{R^2 \sigma}{3n})$. Estimate

\begin{align*}
|P_z w-c|^2 & \leq 2|P_zw-z|^2+2|z-c|^2\\
& \leq  \frac{2 R^4 \sigma^2}{9 n^2}+ 2|z-c|^2 \\
& \leq \frac{2 R^4 \sigma^2}{9 n^2}+ 2 R^4 \sigma^2\\
& \leq \frac{5 R^2 \sigma^2}{9},
\end{align*}
by using the fact that $0<R \leq \frac{1}{2}.$

For the complex tangential components, we have

\begin{align*}
|Q_z w|^2 & = \sum_{j=2}^n |\xi_j|^2 \\
& \frac{R^2 \sigma(n-1)}{3n} \leq \frac{R^2\sigma}{3}.
\end{align*}
Putting these estimates together shows $w \in D(z,r)$ and completes the proof. 
\end{proof}

\begin{proof} [Proof of Lemma \ref{Kor'{a}nyiContainment}]
Take $w \in B_K(z,r)$ and compute to see 

\begin{align*}
|P_z w-z| & = |z| \left | \frac{\langle z,w \rangle}{|z|^2}-1 \right | \\
& \leq \left |1- \frac{\langle z,w \rangle}{|z||w|} \right | + |z| \left | \frac{\langle z,w \rangle}{|z|^2}-\frac{\langle z,w \rangle}{|z||w|} \right| \\
& \leq r+ |\langle z,w \rangle| \frac{\left | |w|-|z| \right|}{|w||z|} \\
& \leq 2r.
\end{align*}

On the other hand, for $j=2, \cdots, n$, there holds, by the Pythagorean Theorem

\begin{align*}
|\xi_j|^2 & \leq |Q_z w|^2\\
& = |w|^2- \frac{|\langle w, z \rangle|^2}{|z|^2} \\
& = \left( |w|- \frac{|\langle w, z \rangle|}{|z|}  \right) \left( |w|+ \frac{|\langle w, z \rangle|}{|z|}  \right) \\
& \leq  2 |w| \left(1- \frac{|\langle w,z \rangle|}{|w||z|} \right) \\
& \leq 2 \left|1- \frac{\langle w,z \rangle }{|w||z|} \right| \leq 2r.
\end{align*}
These estimates prove the upper desired containment.

For the lower containment, suppose $w \in E(z, cr)$. We estimate, using the fact $|z| \geq 1/2$ and the Pythagorean Theorem:

\begin{align*}
\bigg| |z|-|w| \bigg| & = \frac{\bigg| |z|^2-|w|^2 \bigg|}{\bigg| |z|+|w| \bigg |} \\
& \leq 2 \bigg | |z|^2-|P_zw|^2-|Q_z w|^2 \bigg | \\
& \leq 2 \bigg | |z|^2-|P_zw|^2 \bigg| + 2|Q_z w|^2 \\
& \leq 4 |z-P_z(w)|^2+2(n-1)cr \\
& 4 c^2 r^2+ 2(n-1)cr \\
& \leq (4 c^2+2nc) r.
\end{align*}

Next, 

\begin{align*}
\bigg | 1- \frac{\langle z, w \rangle}{|z| |w|} \bigg | & = \bigg | \frac{z}{|z|}- \frac{\langle z, w \rangle z}{|z|^2 |w|} \bigg | \\
& \leq 2 \bigg | z- \frac{\langle z, w \rangle z}{|z| |w|} \bigg |\\
& \leq 2 |z-P_z(w)|+ 2 |\langle z, w \rangle| \frac{\bigg||z|-|w|\bigg |}{|z||w|}\\
& \leq (2c+8 c^2+4nc) r
\end{align*}
and we see clearly that with an appropriately small choice of $c$ we can guarantee 

\begin{equation}
\bigg| |z|-|w| \bigg| + \bigg | 1- \frac{\langle z, w \rangle}{|z| |w|} \bigg | \leq r,   
\end{equation}
as desired.
\end{proof}

\begin{proof} [Proof of Lemma \ref{Kor'{a}nyiCarleson}]
We first prove that an arbitrary $T_z$ is contained in some Kor\'{a}nyi ball. Without loss of generality, we can assume $|z|>1/2$. We estimate, for $w \in T_z$:

\begin{align*}
\bigg | |z|-|w| \bigg | & \leq 2 \bigg | |z|^2-|w|^2 \bigg | \\ 
& = 2 \bigg | |z|^2- |P_z w|^2 +|P_zw|^2-|w|^2 \bigg | \\
& \leq 4 |z-P_z w|^2 + 2 |Q_zw|^2,
\end{align*}
so we have reduced matters to estimating $|z-P_zw|^2 $ and $|Q_z w|^2$ separately. We have

\begin{align*}
|z-P_zw|^2 & = |z|^2 \bigg | 1- \frac{\langle w,z \rangle}{|z|^2}\bigg|^2 \\
& \leq 2 \bigg | 1- \frac{\langle w,z \rangle}{|z|}\bigg|^2 + 2 |z|^2 \bigg |\frac{\langle w, z \rangle}{|z|}-\frac{\langle w, z \rangle}{|z|^2} \bigg|^2 \\
& \leq 4 (1-|z|)^2 \leq 2 (1-|z|),
\end{align*}
giving the desired control. Note we used the condition $w \in T_z$ in the penultimate inequality.

Next, using the estimate on $|Q_z w|^2$ in the proof of Lemma \ref{Kor'{a}nyiContainment}, we get

\begin{align*}
|Q_zw|^2 & \leq 2 \left( |w|-\frac{|\langle z,w \rangle|}{|z|}\right)\\
& \leq 2 \left( 1-\frac{|\langle z,w \rangle|}{|z|}\right) \\
& \leq 2\left | 1-\frac{\langle z,w \rangle}{|z|}\right | \leq 2 (1-|z|),
\end{align*}
again using $w \in T_z$. This proves the desired estimate for $\bigg| |z|-|w| \bigg|$.

On the other hand, 
\begin{align*}
\bigg | 1- \frac{\langle z,w \rangle}{|z||w|}\bigg | & \leq | 1- \frac{\langle z,w \rangle}{|z|}\bigg | + (1-|w|) \\
& \leq 2(1-|z|) +\bigg ||z|-|w| \bigg| \\
& \lesssim 1-|z|,
\end{align*}
using the previous estimate we proved. This shows $T_z \subset B_K(z,r)$ where $r=C(1-|z|), \, C \geq 1$, as desired. It is clear $|T_z| \sim |B_K(z,r)| \sim (1-|z|)^{n+1} $ as well. 

Conversely, suppose we are given a Kor\'{a}nyi ball $B_K(z,r)$, where $r \geq (1-|z|)$. Without loss of generality, we can assume $r<\frac{1}{4}$ and $|z| > \frac{3}{4}$, for example. Choose $c \in (0,1)$ so that 

$$ r \sim (1-c|z|) \quad \text{and} \quad 2r+(1-|z|) \leq 1-c|z|. $$
Then, setting $\widetilde{z}:= c z$ and letting $w \in B_K(z,r)$, we get 

\begin{align*}
\left | 1- \frac{\langle \widetilde{z},w \rangle}{|\widetilde{z}|} \right| & = \left | 1- \frac{\langle z,w \rangle}{|z|} \right| \\
& \leq  \left| 1- \frac{\langle z,w \rangle}{|z||w|} \right|+(1-|w|)\\
& \leq r+ \bigg| |z|-|w| \bigg|+ (1-|z|) \\
& \leq 2r+(1-|z|) \leq 1-|\widetilde{z}|.
\end{align*}
This proves $w \in T_{\widetilde{z}},$ as required. 
\end{proof}

\begin{proof} [Proof of Proposition \ref{prop:BMOequivalence}]
Let $r>0$. We first take $b \in \BMO$ and show $b \in \BMO_r.$ Let $z \in \B$ be close to the boundary. By using Proposition \ref{ellipsoidspolydisks}, Lemmas \ref{Kor'{a}nyiContainment} and \ref{Kor'{a}nyiCarleson} together with Proposition \ref{DyadicProperties}, we see that for any $z \in \B$, there exists $\ell \in \{1,\dots, N\}$ and $\wK_{z} \in \mathcal{D}_{\ell}$ so that $\wK_z \supset D(z,r)$ and $|\wK_z| \sim |D(z,r)|$ with constants independent of $z$. Therefore, we have:

\begin{align*}
\frac{1}{|D(z,r)|} \int_{D(z,r)}|b- \langle b \rangle_{D(z,r)}| \, dV & \leq \frac{2}{|D(z,r)|} \int_{D(z,r)}|b- \langle b \rangle_{\wK_z}| \, dV \\
& \lesssim \frac{1}{|\wK_z|}\int_{\wK_z} |b-\langle b \rangle_{\wK_z}| \, dV \lesssim \|b\|_{\BMO}.
\end{align*}

Now we assume that $b \in \BMO_r.$ For the proof of the other inequality, we will use the well-known fact that $\BMO_r=\BMO_s$ for all $s>0$ with equivalent norms (see \cite[Theorem A]{ZhuHankel} and its proof). Fix a dyadic kube $K_{j_0}^{k_0} \in \mathcal{D}$ and let $\wK_{j_0}^{k_0}$ denote its corresponding dyadic tent.  We will use another geometric fact: there exists a radius $R>0$ and points $z_{j}^k$ so that if $\widehat{K}_{j}^k \subseteq \widehat{K}_{j_0}^{k_0}$ is a dyadic descendant of $\widehat{K}_{j_0}^{k_0}$, then  $D(z_j^k, R) \supset K_j^k$ and $|D(z_j^k,R)| \sim |K_j^k| $ (see \cite[Lemma 1]{RTW}). Moreover, we will use the facts that $|K_j^k| \sim e^{-2 \theta_0(n+1)(k-k_0)}|K_{j_0}^{k_0}|$ and that each dyadic cube $\widehat{K}_j^k$ has at most $e^{2n \theta_0}$ children.   Then, breaking up the integral using the dyadic decomposition of the Carleson tent $\wK_{j_0}^{k_0}$ and using the triangle inequality repeatedly, we get:

\begin{align*}   
& \frac{1}{|\wK_{j_0}^{k_0}|} \int_{\wK_{j_0}^{k_0}}|b-\langle b \rangle_{\wK_{j_0}^{k_0}}| \, dV\\
& \leq \frac{2}{|\wK_{j_0}^{k_0}|} \int_{\wK_{j_0}^{k_0}}|b-\langle b \rangle_{D(z_{j_0}^{k_0}, R)}| \, dV \\
& = \frac{2}{|\wK_{j_0}^{k_0}|} \sum_{k=k_0}^{\infty} \sum_{j: \wK_j^k \subseteq \wK_{j_0}^{k_0}} \int_{K_{j}^k} |b- \langle b \rangle_{D(z_{j_0}^{k_0},R)}| \, dV \\
& \lesssim \frac{1}{|\wK_{j}^{k}|} \sum_{k=k_0}^{\infty} e^{-2\theta_0(n+1)(k-k_0)} |K_j^k| \sum_{j: \wK_j^k \subseteq \wK_{j_0}^{k_0}} \frac{1}{|D(z_j^k),R)|}\int_{D(z_j^k,R)} |b- \langle b \rangle_{D(z_{j_0}^{k_0},R)}| \, dV \\
& \lesssim \sum_{k=k_0}^{\infty} e^{-2\theta_0(n+1)(k-k_0)}  e^{2\theta_0n(k-k_0)}   (k+1) \|b\|_{\BMO_{3R}} \\
& \lesssim \|b\|_{\BMO_{3R} } \sum_{k=k_0}^{\infty} k 2^{-e\theta_0(k-k_0)} \lesssim \|b\|_{\BMO_r}.
\end{align*}
\end{proof}

 \begin{proof} [Proof of Proposition \ref{prop:oscspaces}]

By the elementary inequality $x \leq e^x$ for all $x \geq 0 $, it is obvious $\osc_{(\exp L)} \cap \, \text{Hol} (\B) \subseteq \text{BMO} \cap \, \text{Hol} (\B)= \mathcal B.$   
  For the other direction, it suffices to show that if $b \in \text{BMO} \cap \, \text{Hol} (\B)$, then $b$ satisfies the more classical looking BMO condition

 $$ \sup_{B} \frac{1}{|B|} \int_{B} |b-\langle b \rangle_{B}| \, dV< \infty$$
 for all \emph{Kor\'{a}nyi balls} $B$ contained in $\B$. The exponential estimate will then follow from the classical John-Nirenberg inequality for spaces of homogeneous type, see \cite{CRW}. Since we already know the BMO condition holds over Carleson cubes (equivalently, Kor\'{a}nyi balls that have non-empty intersection with the boundary by Lemma \ref{Kor'{a}nyiCarleson}), it suffices to consider a Kor\'{a}nyi ball $B$ with $D \cap \partial \B = \emptyset$, and we can additionally assume that the radius of $B$ is smaller than a fixed multiple of the distance of its center to the boundary (because otherwise by the doubling property we could approximate the ball in measure by a Kor\'{a}nyi ball intersecting the boundary which contains it). In particular, assume we are considering a ball $B_K(z,s),$ where $s< \frac{1-|z|^2}{24n}$. Apply Proposition \ref{ellipsoidspolydisks} together with Lemma \ref{Kor'{a}nyiContainment} with parameters $R=1/2$, $r= \tanh^{-1}(1/2)$; by our choice of constants, such a quasi-ball is also contained in the hyperbolic ball $D(z,r)$, where $r$ is an absolute constant. Now, since any function $b \in \text{BMO} \cap \, \text{Hol} (\B)= \Bl $ also belongs to the space BO, with $\|b\|_{\rm{BO}} \sim \|b\|_{\BMO} \sim \|b\|_{\mathcal{B}}$ (see \cite[Corollary 1, pp.320]{BBCZ}), we have the estimate

 \begin{align*}
 \frac{1}{|B|} \int_{B} |b-\langle b \rangle_{B}| \, dA & \leq \frac{1}{|B|^2} \int_{B} \int_{B}  |b(z)-b(w)| \, dV(w) \, dV(z)\\
 & \lesssim (1+r) \|b\|_{\BO_r},
 \end{align*}
 and since $r$ is a fixed constant we are done.

  \end{proof}

\small\bibliographystyle{amsalpha}
\bibliography{References}

\end{document}